\documentclass[12pt]{iopart} 
\usepackage[nolists]{endfloat}

\usepackage[top=2.5cm,bottom=2.5cm,left=2cm,right=2cm]{geometry}

\usepackage{graphicx,subfigure}
\usepackage{amssymb,verbatim,amsthm}
\newtheorem{theorem}{Theorem}[section]
 
\newtheorem{proposition}[theorem]{Proposition}
\newtheorem{corollary}[theorem]{Corollary}

\newtheorem{assumption}[theorem]{Assumption}
\newtheorem{remark}[theorem]{Remark}


%



\newcommand{\N}{\mathbb{N}}
\newcommand{\Z}{\mathbb{Z}}

\newcommand{\R}{\mathbb{R}}

\newcommand{\bbP}{\mathbb{P}}

\newcommand{\E}{\mathbb{E}}
\newcommand{\T}{\mathbb{T}}

\newcommand{\eps}{\sigma}
\newcommand{\Laplace}{\Delta}

\newcommand{\Pl}{P_{\lambda}}
\newcommand{\Ql}{Q_{\lambda}}
\newcommand{\Bl}{B_0({\eta})}
\newcommand{\Wl}{W_{\lambda}}
\newcommand{\Wc}{W_{\lambda}^c}

\newcommand{\cA}{\mathcal{A}}
\newcommand{\cB}{\mathcal{B}}
\newcommand{\cC}{\mathcal{C}}
\newcommand{\ls}{\lambda^{\star}}
\newcommand{\Be}{B_j}
\newcommand{\hu}{\widehat {m}}
\newcommand{\hC}{\widehat {C}}

\newcommand{\rd}{\mathrm{d}}
\newcommand{\pd}{\partial}
\newcommand{\abs}[1]{| #1 |}

\newcommand{\norm}[1]{\| #1 \|}

\newcommand{\inner}[2]{\langle #1 , #2 \rangle}


\begin{document}

\title{Stability of Filters for the Navier-Stokes Equation}
\author{C.\ E.\ A.\ Brett, 
K.\ F.\ Lam, K.\ J.\ H.\ Law, 
D.\ S.\ McCormick, M.\ R.\ Scott 
and A.\ M.\ Stuart}
\address{
Warwick Mathematics Institute, University of Warwick,
  Coventry CV4 7AL, UK}

\ead{a.m.stuart@warwick.ac.uk, k.j.h.law@warwick.ac.uk}

\begin{abstract}
Data assimilation methodologies are designed 
to incorporate noisy observations
of a physical system into an underlying model 
in order to infer the properties of the state 
of the system. 
Filters refer to a class of data assimilation
algorithms designed to update the estimation of the state
in a on-line fashion, as data is acquired sequentially.
For linear problems subject
to Gaussian noise filtering can be performed exactly using
the Kalman filter. For nonlinear systems it can be
approximated in a systematic way by particle filters.
However in high dimensions these particle filtering methods
can break down. Hence, for the large nonlinear systems 
arising in applications such as weather forecasting, 
various {\em ad hoc} filters are used,
mostly based on making Gaussian approximations. The purpose
of this work is to study the properties of these 
{\em ad hoc} filters, working in the context of the 2D 
incompressible Navier-Stokes equation.
By working in this infinite dimensional setting we provide an
analysis which is useful for the understanding
of high dimensional filtering, and is robust to mesh-refinement.
We describe theoretical results showing that, in the
small observational noise limit, the filters
can be tuned to perform accurately in tracking the signal
itself (filter stability), 
provided the system is observed in a sufficiently large
low dimensional space; roughly speaking this space
should be large enough to contain the unstable modes of the
linearized dynamics. Numerical results are given which
illustrate the theory. In a simplified scenario we also
derive, and study numerically,
a stochastic PDE which determines filter
stability in the limit of frequent observations, subject
to large observational noise. 
The positive results herein concerning
filter stability complement recent numerical
studies which demonstrate that the {\em ad hoc} filters 
perform poorly in reproducing statistical variation about the true
signal.
\end{abstract}

\pacs{}
\date{\today}

\maketitle

\section{Introduction}
\label{sec:intro}

Assimilating large  
data sets into mathematical models of time-evolving
systems presents a major challenge in a wide range of 
applications. Since the data and the model are often 
uncertain, a natural overarching framework
for the formulation of such problems is that of Bayesian
statistics. However, for high dimensional models,
investigation of the Bayesian posterior distribution
of model state given data is not computationally feasible
in on-line situations. For this reason various {\em ad hoc} 
filters are used. The purpose of this paper is to provide an
analysis of such filters. 

The paradigmatic example of data assimilation is weather 
forecasting: we have many complex models to predict the state 
of the atmosphere, currently involving 
on the order of ${\mathcal O}(10^{8})$ 
unknowns, but we cannot know the exact state of the 
system at any one time; we thus have to contend with an initial 
condition which is only known incompletely. This is compensated
for by a large number, currently on the order of 
${\mathcal O}(10^{6})$, partial observations of the
atmosphere at subsequent times. Filters are widely used to
make forecasts which combine the mathematical model
of the atmosphere and the data to make predictions. 
Indeed the particular method of data assimilation which we 
study here includes, as a special case, the algorithm 
commonly known as 3DVAR. This method originated in weather 
forecasting. It was first proposed 
at the UK 
Met Office in 1986 \cite{article:Lorenc1986}, and was developed 
by the US National Oceanic and Atmospheric Administration 
(NOAA) soon thereafter; see \cite{article:Parrish1992}.
More details of the implementation of 3DVAR by the UK Met 
Office can be found in \cite{article:Lorenc2000}, and by 
the European Centre for Medium-Range Weather Forecasts (ECMWF) 
in \cite{article:Courtier1998}. The 3DVAR algorithm is 
prototypical of the many more sophisticated filters which are 
now widely used in practice  and 
it is thus natural to study it. The reader should be aware,
however, that the development of new filters is a very
active area and that the analysis here constitutes an initial
step towards the analyses required for these  more
recently developed algorithms. For insight into some
of these more sophisticated filters see 
\cite{toth1997ensemble,evensen2009data,VL09,
harlim2008filtering,majda2010mathematical} and the references therein.

Filtering can be performed exactly
for linear systems subject to Gaussian noise: 
the Kalman filter \cite{harvey1991forecasting}.
For nonlinear or non-Gaussian scenarios the particle
filter \cite{doucet2001sequential}
may be used and provably approximates the desired
probability distribution as the number of particles
is increased \cite{bain2008fundamentals}. However
in practice this method performs poorly in high
dimensional systems \cite{Bickel}. Whilst there
is considerable research activity
aimed at overcoming this degeneration 
\cite{van2010nonlinear, chorin2010implicit},
the methodology cannot yet be viewed
as a provably accurate tool within the context of the
high dimensional problems arising in geophysical
data assimilation.  In order to circumvent problems associated
with the representation of high dimensional probability
distributions some form of {\em ad hoc}
Gaussian approximation is typically
used to create practical filters, and the 3DVAR method
which we analyze here is perhaps the simplest example of this.

In the paper \cite{lawstuart} a wide range of Gaussian
approximate filters, including 3DVAR,
are evaluated by comparing the
distributions they produce with a highly accurate
(and impractical in realistic online scenarios) 
MCMC simulation of the
desired distributions. The conclusion of that work
is that the Gaussian approximate filters perform well
in tracking the mean of the desired distribution, but poorly
in terms of statistical variability about the mean. In
this paper we provide a theoretical analysis of the
ability of the filters to estimate the mean state accurately.
Although filtering is widely used in practice, much of the 
analysis of it, in the context of fluid mechanics, works with 
finite-dimensional dynamical models. Our aim is to work directly
with a PDE model relevant in fluid mechanics, the Navier-Stokes 
equation, and thereby confront the high-dimensional nature of 
the problem head-on. 
Study of the stability of filters for data assimilation has
been a developing research area over the last few years and
the paper \cite{carrassi2008data} contains a finite
dimensional  theoretical result, 
numerical experiments in 
a variety of finite and (discretized) infinite dimensional
systems not covered by the theory, and references to
relevant applied literature. 
Our analysis will build in a concrete fashion
on the approach in 
\cite{olson2003determining} and \cite{hayden2011discrete}
which were amongst the first to study data assimilation directly
through PDE analysis, using ideas from the theory of determining
modes in infinite dimensional dynamical systems.
However, in contrast to those papers,
we will allow for noisy observations in our analysis. 
Nonetheless the estimates in \cite{hayden2011discrete} form
an important component of our analysis.

The presentation will be organized as follows: in Section
\ref{sec:NSE} we introduce the Navier-Stokes equation as the forward
model; in Section \ref{sec:filters} we formulate data
assimilation as a Bayesian inverse problem and show
how approximate Gaussian filters can be derived,
leading to 3DVAR and generalizations;  in 
Section \ref{sec:stability} we introduce notions of
stability and prove the Main Theorems \ref{t:m}
and \ref{t:mz} concerning filter stability for
sufficiently small observational noise; 
section \ref{sec:stability} also contains derivation
of the stochastic PDE (SPDE) and deterministic PDE
which may be used to
study filter stability for 3DVAR
when data is aquired frequently
in time  and the observational noise is large;
in Section \ref{sec:numerics}
we present numerical results which corroborate the analysis;
and finally, in Section \ref{sec:conclusions} we present conclusions.
The mathematical tools required to follow the arguments
in this paper comprise basic ideas from probability
on Hilbert space, properties of the Navier-Stokes
equation, analysis of nonautonomous/random
dynamical systems, and properties of stochastic PDEs. 
Together these tools facilitate an analysis which allows
mathematical study of filtering and, we believe, 
can be built upon to study other problems arising in 
the filtering of complex systems.

\section{Forward Model: Navier-Stokes equation}
\label{sec:NSE}

In this section we establish a notation for,
and describe the properties of, the 
Navier-Stokes equation. This is the forward model which
underlies the inverse problem which we study in this paper.
We consider the 2D Navier-Stokes equation on the torus
$\T^{2} := [0,L) \times [0,L)$ with periodic boundary conditions:
\begin{eqnarray*}
\begin{array}{cccc}
\pd_{t}u - \nu \Laplace u + u \cdot \nabla u + \nabla p &=& f 
& {\rm for~ all ~} 
(x, t) \in \T^{2} \times (0, \infty), \\
\nabla \cdot u &=& 0 &{\rm for~ all~ }
(x, t) \in \T^{2} \times (0, \infty), \\
u(x, 0) &=& u_{0}(x) &{\rm for~ all ~}
x \in \T^{2}.
\end{array}
\label{eq:NSE}
\end{eqnarray*}

Here $u \colon \T^{2} \times (0, \infty) \to \R^{2}$ is a time-dependent vector field representing the velocity, $p \colon \T^{2} \times (0,\infty) \to \R$ is a time-dependent scalar field representing the pressure, $f \colon \T^{2} \to \R^{2}$ is a vector 
field representing the forcing (which we assume to be
time-independent for simplicity), and $\nu$ is the viscosity. 
In numerical simulations (see section~\ref{sec:numerics}), we
typically represent the solution via  
the vorticity $w$ and stream function $\zeta$; these 
are related through $u = \nabla^{\perp} \zeta$ and $w=\nabla^{\perp} \cdot u$, 
where $\nabla^{\perp} = (\pd_{2}, -\pd_{1})^{\mathrm{T}}$.
We define
$${\mathcal H}:= \left\{ L{\rm {-periodic\,trigonometric\,polynomials\,}} 
u:[0,L)^2 \to {\mathbb R}^2\,\Bigl|\, \nabla \cdot u = 0, \,\int_{\T^{2}} u(x) \, \rd x = 0 \right\}
$$
and $H$ as the closure of ${\mathcal H}$ with respect to the
$(L^{2}(\T^{2}))^{2}$ norm. We define $P:(L^{2}(\T^{2}))^{2} 
\to H$ to be the Leray-Helmholtz orthogonal projector.

Given $k = (k_{1}, k_{2})^{\mathrm{T}}$, define $k^{\perp} := (k_{2}, -k_{1})^{\mathrm{T}}$. Then an orthonormal basis for
$H$ is given by $\psi_{k} \colon \R^{2} \to \R^{2}$, where 
$$\psi_{k} (x) := \frac{k^{\perp}}{|k|} \exp\Bigl(\frac{2 \pi i k \cdot x}{L}\Bigr)$$ for $k \in \Z^{2} \setminus \{0\}$. 
Thus for $u \in H$ we may write
$$u = \sum_{k \in \Z^{2} \setminus \{0\}} u_{k}(t) \psi_{k}(x)$$
where, since $u$ is a real-valued function, we have the 
reality constraint $u_{-k} = - \overline{u_{k}}.$
We then define the projection operators $\Pl: H \to H$ 
and $\Ql:H \to H$ by
$$\Pl u = \sum_{|2\pi k|^2 <\lambda L^2} u_{k}(t) \psi_{k}(x),
\quad \Ql=I-\Pl.$$
We let $\Wl=\Pl H$ and $\Wc=\Ql H.$

Using the Fourier decomposition of $u$, we define the
fractional Sobolev spaces
\begin{equation}
\label{eq:Hs} 
H^{s}:= \left\{ u \in H : \sum_{k \in \Z^{2} \setminus \{0\}} (4\pi^{2}\abs{k}^{2})^{s}\abs{u_{k}}^{2} < \infty \right\}
\end{equation}
with the norm $\norm{u}_{s}:= \bigl(\sum_{k} (4\pi^{2}\abs{k}^{2})^{s}
\abs{u_{k}}^{2}\bigr)^{1/2}$, where $s \in \R$. We use
the abbreviated notation $\norm{u}$ for the norm on $H^1$,
and $|\cdot|$ for the norm on $H=H^0$. 

Applyng the projection $P$ to the Navier-Stokes
equation we may write it as an ODE (ordimary differential
equation) in $H$; see
\cite{constantin1988navier, temam1995navier, book:Robinson2001} 
for details. This ODE takes the form
\begin{equation}
\frac{\rd u}{\rd t} + \nu Au + \cB(u, u) = f, \quad u(0)=u_0.
\label{eq:nse}
\end{equation}
Here, $A = -P \Laplace$ is the Stokes operator, 
the term $\cB(u,u) = P(u \cdot \nabla u)$ is the bilinear form 
found by projecting the nonlinear term $u \cdot \nabla u$ into 
$H$ and finally, with abuse of notation, $f$ is the original 
forcing, projected into $H$. 
We note that $A$ is diagonalized in the basis comprised
of the $\{\psi_k\}_{k \in {\Z}^2\backslash\{0\}}$, 
on $H$, and the smallest eigenvalue of $A$
is $\lambda_1=4\pi^2/L^2.$
The following proposition
is a classical result which implies the existence
of a dissipative semigroup for the ODE \eref{eq:nse}. 
See Theorems 9.5 and 12.5 in \cite{book:Robinson2001}
for a concise overview and \cite{temam1995navier,book:Temam1997}
for further details.

\begin{proposition} 
\label{prop:1}
Assume that $u_0 \in H^1$ and $f \in H$.
Then \eref{eq:nse} has a unique strong solution on $t
\in [0,T]$ for any $T>0:$
$$u \in L^{\infty}\bigl((0,T);H^1\bigr)\cap L^{2}\bigl((0,T);D(A)\bigr),\quad \frac{du}{dt} \in L^{2}\bigl((0,T);H\bigr).$$
Furthermore the equation has a global attractor $\cA$
and there is $K>0$ such that, 
if $u_0 \in \cA$, then $\sup_{t \ge 0}\|u(t)\|^2 \le K.$
\end{proposition}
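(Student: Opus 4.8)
The plan is to establish existence through a Galerkin approximation, to derive a priori estimates uniform in the discretization, and then to invoke the standard theory of dissipative dynamical systems to produce the global attractor. I would project \eref{eq:nse} onto the finite-dimensional spaces $\Wl$ (more precisely, onto the spans of finitely many eigenfunctions $\psi_k$ of $A$), obtaining a system of ODEs whose local-in-time solvability is immediate; the real content lies in the a priori bounds that permit passage to the limit.

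Two energy identities drive everything. Pairing \eref{eq:nse} with $u$ in $H$ and using the antisymmetry $\inner{\cB(u,u)}{u}=0$ together with the Poincar\'e inequality $\lambda_1 |u|^2 \le \|u\|^2$ gives control of $|u(t)|$ and of $\int_0^T \|u\|^2\,\rd t$, and yields an absorbing-ball estimate in $H$. Pairing instead with $Au$ produces
\begin{equation}
\frac{1}{2}\frac{\rd}{\rd t}\|u\|^2 + \nu |Au|^2 + \inner{\cB(u,u)}{Au} = \inner{f}{Au}.
\label{eq:enstrophy}
\end{equation}
The decisive point, special to two dimensions with periodic boundary conditions, is the orthogonality $\inner{\cB(u,u)}{Au}=0$; with this the nonlinear term disappears, and Young's inequality applied to $\inner{f}{Au}$ furnishes a bound on $\|u(t)\|$ and on $\int_0^T |Au|^2\,\rd t$. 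These are precisely the bounds asserting $u \in L^{\infty}((0,T);H^1)\cap L^{2}((0,T);D(A))$, after which reading \eref{eq:nse} as an identity in $H$ gives $\rd u/\rd t \in L^{2}((0,T);H)$.

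For uniqueness I would take the difference $v = u^{(1)}-u^{(2)}$ of two solutions, pair with $v$ in $H$, estimate the nonlinear mismatch using Ladyzhenskaya's inequality as $\bigabs{\inner{\cB(u^{(1)},u^{(1)})-\cB(u^{(2)},u^{(2)})}{v}} \le C\|u^{(1)}\|\,|v|\,\|v\|$, absorb the factor $\|v\|$ into the dissipation $\nu\|v\|^2$, and close with Gronwall's lemma using the already-established bound on $\|u^{(1)}(t)\|$. The global attractor then follows from the standard Temam/Robinson machinery: the $H$- and $H^1$-energy estimates furnish bounded absorbing sets, the compact embeddings $D(A)\hookrightarrow H^1 \hookrightarrow H$ render the solution semigroup asymptotically compact, and these two facts give the existence of a compact global attractor $\cA$.

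The uniform bound is inherited from the absorbing ball: since $\cA$ lies inside the $H^1$-absorbing set and is invariant under the semigroup, there is $K>0$ with $\|u(t)\|^2 \le K$ for all $t \ge 0$ whenever $u_0 \in \cA$. I expect the main obstacle to be the enstrophy identity \eref{eq:enstrophy}, namely the treatment of the nonlinear term $\inner{\cB(u,u)}{Au}$: its clean vanishing is exactly the two-dimensional feature that makes the $H^1$ bound global rather than merely local, and verifying it --- most transparently through the vorticity formulation, where $\inner{u\cdot\nabla w}{w}=0$ --- is the crux on which both the regularity and the uniform attractor bound rest.
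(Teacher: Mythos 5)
Your proposal is correct, and it is essentially the argument behind the result that the paper itself does not prove but merely cites: the paper refers to Theorems 9.5 and 12.5 of \cite{book:Robinson2001} (and to \cite{temam1995navier,book:Temam1997}), and those theorems are established precisely by the route you sketch --- Galerkin approximation, the energy estimate from pairing with $u$, the enstrophy estimate from pairing with $Au$ using the two-dimensional periodic orthogonality $\inner{\cB(u,u)}{Au}=0$, uniqueness via Ladyzhenskaya and Gronwall, and the absorbing-ball plus compactness machinery for the attractor. Your identification of the vanishing of $\inner{\cB(u,u)}{Au}$ as the crux (valid only for periodic boundary conditions in 2D), and of the uniform bound $\sup_{t\ge 0}\|u(t)\|^2\le K$ on $\cA$ as inherited from invariance together with the $H^1$ absorbing ball, matches the standard treatment in the cited references.
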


We let $\{\Psi(\cdot,t): H^1 \to H^1\}_{t \ge 0}$ 
denote the semigroup of 
solution operators for the equation \eref{eq:nse} through $t$
time units. We note that by
working with weak solutions,
$\Psi(\cdot,t)$ can be extended to act on larger spaces $H^s$,
with $s \in [0,1)$, under the same assumption on $f$;
see Theorem 9.4 in \cite{book:Robinson2001}.  
We will, on occasion, use this extension of $\Psi(\cdot,t)$
to act on larger spaces. In particular we use
it in the following proposition which follows from
Lemmas 5.3 and 5.5 in \cite{article:Cotter2009}.
This proposition is key to enabling us to show that the 
Bayesian inverse problem, which underlies the
filtering problem of interest, is well-posed.

\begin{proposition} 
\label{prop:2}
Let $s \in (0,1]$ and $t_0>0.$ Then,
for all $t>t_0$, and $u,v \in H$,
\begin{eqnarray*}
\begin{array}{ll}
\|\Psi(u,t)\|_{s}^2 &\le  t_0^{-s}C\bigl(|f|^2+|u|^2)\\
\|\Psi(u,t)-\Psi(v,t)\|_{s}^2 &\le  t_0^{-s}C\bigl(|u|,|u-v|,|f|\bigr)
|u-v|^2.
\end{array}
\end{eqnarray*}
\end{proposition}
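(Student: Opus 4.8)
The plan is to derive both bounds from the classical $L^2$ and $H^1$ energy estimates for the 2D Navier--Stokes equation, exploiting the parabolic smoothing which promotes data merely in $H=H^0$ to $H^s$ for positive time; the factor $t_0^{-s}$ is the quantitative record of this smoothing. This is essentially the content of Lemmas 5.3 and 5.5 of \cite{article:Cotter2009}, which could be invoked directly, but I sketch the underlying mechanism.

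First I would establish the a priori bounds. Testing \eref{eq:nse} against $u$ and using the orthogonality $\inner{\cB(u,u)}{u}=0$ gives, after Young's and Poincar\'e inequalities, a bound on $\sup_{t\ge0}\abs{u(t)}^2$ in terms of $\abs{u_0}^2$ and $\abs{f}^2$, together with control of $\int_0^t\norm{u(\tau)}^2\,\rd\tau$. Testing instead against $Au$ and invoking the special 2D cancellation $\inner{\cB(u,u)}{Au}=0$ (the statement, valid with periodic boundary conditions, that the enstrophy balance carries no nonlinear contribution) yields
$$\frac{\rd}{\rd t}\norm{u}^2 + \nu\abs{Au}^2 \le \nu^{-1}\abs{f}^2.$$
Feeding the integrated $H^1$ bound into this inequality and applying the uniform Gronwall lemma produces $\norm{u(t)}^2 \le t_0^{-1}C(\abs{f}^2+\abs{u}^2)$ for all $t>t_0$, which is exactly where the inverse power of $t_0$ enters. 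Interpolating this $H^1$ estimate against the $H^0$ bound then gives the first claim for every $s\in(0,1]$ with factor $t_0^{-s}$.

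For the Lipschitz estimate I would set $w=\Psi(u,\cdot)-\Psi(v,\cdot)$, which by bilinearity of $\cB$ satisfies $\frac{\rd w}{\rd t}+\nu Aw+\cB(w,\Psi(u,\cdot))+\cB(\Psi(v,\cdot),w)=0$. Testing against $w$, controlling the trilinear terms by a Ladyzhenskaya/interpolation inequality and absorbing the dissipation, gives $\frac{\rd}{\rd t}\abs{w}^2\le g(\tau)\abs{w}^2$ with $g$ integrable thanks to the a priori bounds; Gronwall then yields $\abs{w(t)}^2\le C\abs{w(0)}^2$ with a constant depending on $\abs{u}$, $\abs{u-v}$ (hence on $\abs{v}$) and $\abs{f}$. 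A smoothing step identical to the one above then upgrades this from $H^0$ to $H^s$ and supplies the factor $t_0^{-s}$.

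The main obstacle throughout is the nonlinear term. Closing the $H^1$ estimate hinges on the exact 2D cancellation above, and for the Lipschitz bound the trilinear terms must be interpolated so that the Gronwall exponent involves only the already-controlled time-integrated norms; this is precisely what forces the constant to depend on $\abs{u}$, $\abs{u-v}$ and $\abs{f}$ rather than on pointwise $H^1$ data.
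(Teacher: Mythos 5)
Your proposal is correct and takes the same route as the paper: the paper's entire proof of Proposition \ref{prop:2} is the citation of Lemmas 5.3 and 5.5 of \cite{article:Cotter2009}, which you identify explicitly as sufficient. Your additional sketch of the mechanism behind those lemmas (energy estimates with the 2D cancellation $\inner{\cB(u,u)}{Au}=0$, uniform Gronwall to produce the $t_0^{-1}$ smoothing factor, interpolation to reach $H^s$, and the Gronwall argument for the difference equation) is sound and simply supplies detail the paper leaves to the reference.
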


The key properties of the Navier-Stokes
equation that drive our analysis of the filters are summarized
in the following proposition, taken from the paper
\cite{hayden2011discrete}.
To this end, define $\Psi(\cdot)=\Psi(\cdot,h)$
for some fixed $h>0.$ 
Note that the statement here is closely related
to the {\em squeezing property} \cite{book:Robinson2001}
of the Navier-Stokes equation, a property 
employed in a wide range of applied contexts.

\begin{proposition} 
\label{prop:3}
Let $u \in \cA$ and $v \in H^1$. There is
$\beta=\beta(|f|,L,\nu)>0$ such that
\begin{equation}
\label{eq:1}
\|\Psi(u)-\Psi(v)\|^2 \le \exp(\beta h)\|u-v\|^2.
\end{equation}
Now let $\|u-v\| \le R$ and assume that
$$\lambda>\ls:=\frac{9c^{8/3}}{\lambda_1^{\frac13}}\Bigl(
\frac{2K^\frac12+R^{\frac12}}{\nu}\Bigr)^{8/3}$$
where $c$ is a dimensionless positive constant.
Then there exists $t^*=t^*(|f|,L,\nu,\lambda,R)$
with the property that, for all $h \in (0,t^*],$ 
there is $\gamma \in (0,1)$ such that
\begin{equation}
\label{eq:2}
\|Q_{\lambda}\bigl(\Psi(u)-\Psi(v)\bigr)\|^2 \le \gamma^2\|u-v\|^2.
\end{equation}
\end{proposition}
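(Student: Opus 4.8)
The plan is to control the difference $w(t) := \Psi(u_0,t)-\Psi(v_0,t)$ of two trajectories $u=\Psi(u_0,\cdot)$ and $v=\Psi(v_0,\cdot)$ of \eref{eq:nse}, where $\Psi(\cdot)=\Psi(\cdot,h)$. Subtracting the two copies of the equation and using bilinearity in the form $\cB(u,u)-\cB(v,v)=\cB(u,w)+\cB(w,v)$ gives
\[
\frac{\rd w}{\rd t}+\nu A w = -\cB(u,w)-\cB(w,v), \qquad w(0)=u_0-v_0,
\]
which underlies both estimates. The two recurring tools are the standard two-dimensional trilinear bounds for $\cB$ (via the Ladyzhenskaya and Agmon inequalities), and the fact that $\cA$ is invariant, so that $u(t)\in\cA$ and hence $\norm{u(t)}\le K^{1/2}$ (together with the companion uniform smoothing bounds for $u$ on the attractor) for all $t\ge 0$.

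For \eref{eq:1} I would pair the difference equation with $Aw$ in $H$. Writing $v=u-w$ so that $\cB(u,w)+\cB(w,v)=\cB(u,w)+\cB(w,u)-\cB(w,w)$ and invoking the two-dimensional orthogonality identity $\inner{\cB(w,w)}{Aw}=0$ to eliminate the only genuinely quadratic term, I am left with
\[
\frac12\frac{\rd}{\rd t}\norm{w}^2+\nu\abs{Aw}^2 = -\inner{\cB(u,w)}{Aw}-\inner{\cB(w,u)}{Aw}.
\]
Both terms on the right involve $u$ only, so estimating them by the trilinear inequalities and absorbing $\nu\abs{Aw}^2$ by Young's inequality yields $\frac{\rd}{\rd t}\norm{w}^2\le g(t)\norm{w}^2$, with $g$ a polynomial in $\norm{u}$. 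Since $\norm{u(t)}\le K^{1/2}$ uniformly, $g(t)\le\beta$ pointwise with $\beta=\beta(\abs{f},L,\nu)$, and Gronwall's inequality closes the argument.

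For the squeezing estimate \eref{eq:2} I would apply $\Ql$ to the difference equation, set $q=\Ql w$, and pair with $Aq$. On $\Ql H$ the Stokes operator satisfies the spectral gap $\abs{Aq}^2\ge\lambda L^{-2}\norm{q}^2$, so the viscous term supplies a decay rate proportional to $\nu\lambda$. The nonlinear coupling $\inner{\cB(u,w)+\cB(w,v)}{Aq}$ is estimated by the trilinear inequalities using the uniform bounds on $u$ and the bound $\norm{v}\le\norm{u}+\norm{w}$, where $\norm{w}$ is controlled on $[0,h]$ by \eref{eq:1}; after absorbing part of $\nu\abs{Aq}^2$ this produces
\[
\frac{\rd}{\rd t}\norm{q}^2+c\nu\lambda\norm{q}^2 \le F(t),
\]
where $F$ is controlled in terms of $\norm{w}^2$ with coefficient $C(K,R,\nu)$, the dependence on $K$ and $R$ tracking the bounds on $\norm{u}$ and $\norm{w}$. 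The threshold $\ls$ is precisely the value of $\lambda$ at which $c\nu\lambda$ overtakes this coefficient. Integrating over $[0,h]$, using $\norm{q(0)}\le\norm{w(0)}$ and $\norm{w(s)}\le e^{\beta h/2}\norm{w(0)}$ from \eref{eq:1}, gives $\norm{q(h)}^2\le\bigl(e^{-c\nu\lambda h}+o(1)\bigr)\norm{w(0)}^2$, with the remainder small in $h$; choosing $t^*$ small then yields $\gamma\in(0,1)$ for every $h\in(0,t^*]$.

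The main obstacle is securing $\gamma$ strictly below $1$ uniformly as $h\to 0$: for short times $\Psi(u)-\Psi(v)$ is close to $w(0)$, and if $w(0)$ lives mainly in high modes the crude bound gives only $\gamma\approx 1$. A genuine contraction therefore requires that the linear dissipation on $\Ql H$ strictly \emph{dominate} the nonlinear growth rather than merely balance it, which is exactly what $\lambda>\ls$ encodes; the delicate point is to allocate the norms in the two-dimensional trilinear estimates so that, after the dissipation is absorbed, the residual growth coefficient is provably below $c\nu\lambda$ with the stated dependence on $K$, $R$ and $\nu$.
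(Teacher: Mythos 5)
Your overall strategy is, in substance, the same argument the paper relies on: the paper's own proof is a citation of Theorems 3.8 and 3.9 of \cite{hayden2011discrete}, whose proofs are exactly the energy estimates you sketch (difference equation, the 2D-periodic identity $\inner{\cB(w,w)}{Aw}=0$, the attractor bound $\norm{u}\le K^{1/2}$, and Gronwall for \eref{eq:1}; $\Ql$-projection, spectral gap, and dissipation-dominance for \eref{eq:2}). Your treatment of \eref{eq:1} is essentially complete, and you correctly identify, and resolve in principle, the small-$h$ subtlety that the naive contraction factor tends to $1$ as $h\to 0$. One point in your favour relative to the bare citation: Theorem 3.9 of \cite{hayden2011discrete} is proved under the assumption that the low-mode part of the initial difference vanishes (the paper's proof explicitly notes the modification needed because here $\Pl\bigl(u(0)-v(0)\bigr)\ne 0$); your argument, which only uses $\norm{q(0)}\le\norm{w(0)}$, builds this in automatically.

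The genuine gap is in the central estimate for \eref{eq:2}: the claim that, after absorbing part of $\nu\abs{Aq}^2$, the nonlinear coupling is bounded by $C(K,R,\nu)\norm{w}^2$ with a coefficient \emph{independent of} $\lambda$. This cannot be obtained ``by the trilinear inequalities'' as asserted. The term $\inner{\cB(w,v)}{Aq}$ (equivalently $\inner{\cB(w,w)}{Aq}$ after writing $v=u-w$) requires control of $\norm{w}_{L^\infty}$ or $\norm{\nabla w}_{L^4}$, hence of $\abs{Aw}$ --- the full difference, not just $q$; and $\abs{Aw}$ is neither bounded by $\norm{w}$ nor absorbable into the dissipation $\nu\abs{Aq}^2$. (In \eref{eq:1} you escaped precisely this term via the orthogonality identity; pairing with $Aq$ instead of $Aw$ destroys that cancellation, and attractor bounds do not help since $w$ is not on $\cA$.) The standard fix is to split $\abs{Aw}\le\abs{A\Pl w}+\abs{Aq}\le\lambda^{1/2}\norm{w}+\abs{Aq}$, but then the source coefficient grows like a fractional power of $\lambda$, and the contraction condition is no longer ``$c\nu\lambda$ exceeds $C(K,R,\nu)$'' but a nonlinear comparison $\nu\lambda\gtrsim C(K,R)\,\lambda^{\theta}$ with $\theta\in(0,1)$. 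The dominance argument still closes for large $\lambda$, so your sketch yields existence of \emph{some} threshold; but the proposition asserts the specific threshold $\ls=9c^{8/3}\lambda_1^{-1/3}\bigl((2K^{1/2}+R^{1/2})/\nu\bigr)^{8/3}$, whose exponents encode exactly this $\lambda$-dependent bookkeeping (the condition $\lambda>\ls$ is equivalent to $\nu\lambda^{3/8}\lambda_1^{1/8}>c'(2K^{1/2}+R^{1/2})$, not to a linear dominance). To prove the statement as written you must carry out the splitting and track the exponents; the $\lambda$-independent source coefficient claimed in your differential inequality is inconsistent with the stated formula for $\ls$.
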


\begin{proof}
The first statement is simply Theorem 3.8 from 
\cite{hayden2011discrete}. The second statement
follows from the proof of Theorem 3.9 in the same paper, modified at the
end to reflect the fact that, in our setting, 
$\Pl \delta(0) \ne 0.$
Note also that the constant $\lambda$ appearing on the right
hand side of the lower bound for $\lambda$ in the 
statement of Theorem 3.9 in \cite{hayden2011discrete}
should be $\lambda_1$ (as the
proof that follows in \cite{hayden2011discrete} shows) 
and that use of definition of $K$ 
(see Theorem 3.6 of that paper) 
allows rewrite in terms of $K$ -- indeed
the proof in that paper is expressed in terms of $K$.
\end{proof}

\section{Inverse Problem: Filtering}
\label{sec:filters}

In this section we describe the basic problem of {\em filtering}
the Navier Stokes equation \eref{eq:nse}: estimating
properties of the state of the system sequentially
from partial, noisy, sequential observations of the state. 
We first set up the inverse
problem of interest, in subsection \ref{ssec:setup}.
Then, in subsection \ref{ssec:fild}, we describe the 
full statistical filtering distribution. We 
describe the approximate Gaussian filters, which are
the focus of the remainder of the paper, in subsection
\ref{ssec:approx}; in subsection \ref{ssec:fully}
we make a brief remark concerning the extension to
completely observed systems. 
We conclude with subsection \ref{ssec:3DVar}
which introduces 3DVAR as an example of the approximate
Gaussian filter.

\subsection{Setup}
\label{ssec:setup}

Throughout the following we write $\N$ for the natural 
numbers $\{ 1, 2, 3, \dots \}$, and $\Z^{+} := \N \cup \{ 0 \}$ for the non-negative integers $\{ 0, 1, 2, 3, \dots \}$.
Recall that we have defined $\Psi(\cdot)=\Psi(\cdot,h)$
for some fixed $h>0.$ 
Our interest is in determining $u$ from noisy
observations of $\Pl u$.
We let $X$ denote $H^s$ for any $s \geq 1$ and
define $\{ u_j \}_{j \in \Z^{+}}$, $u_j \in X$ by 
\footnote{With abuse of notation, subscripts $j$ will indicate
times, while subscripts $k$ will denote Fourier coefficients as before
in order to avoid confusion. The meaning should also be clear in context.}
\begin{equation}
\label{eqn:ModelWithoutNoise}
u_{j+1} := \Psi(u_j).
\end{equation}
Thus $u_j=u(jh)$ where $u$ solves \eref{eq:nse}.
We now let $\{ {\xi}_j \}_{j \in \N}$ be a noise sequence in $\Wl$ 
which perturbs the sequence $\{\Pl u_j \}_{j \in \N}$
to generate the observation sequence $\{ y_j \}_{j \in \N}$ 
in $\Wl$ given by
\begin{equation}
\label{eqn:Observation}
y_{j+1} := \Pl u_{j+1} + {\xi}_{j+1}, \quad j \in \Z^{+}.
\end{equation}
We let $Y_j=\{y_i\}_{i=1}^j$, the accumulated data 
up to time $t=jh.$
We assume that $u_0$ is not known exactly. The {\em goal}
of filtering is to determine information about the state
$u_j$ from the data $Y_j.$ Mathematically, it is natural 
to formulate this as a Bayesian inverse problem, 
and this viewpoint is described in subsection \ref{ssec:fild}. 

\subsection{Filtering Distribution}
\label{ssec:fild}

In the statistical formulation of the filtering problem
we assume that $(u_0,Y_j)$ is a random variable on $(X,\Wl^j)$,
defined by specifying the distributions of $u_0$ (the prior)
and $=\{\xi_i\}_{1 \le i \le j}$ (the observational noises) which
is assumed to be an i.i.d. sequence. The
aim is to find the conditional probability distribution on $u_0$
given a single realization of $Y_j$, and we denote
this conditional distribution by $\bbP(u_0|Y_j).$ 
The filtering distribution, namely the probability distribution 
on $u_j$ given $Y_j$, denoted by $\bbP_{j}(u_j|Y_j)$, 
is then found 
as the image of $\bbP(u_0|Y_j)$
under the map $\Psi(\cdot;jh).$
In finite dimensions carrying out this 
program is a straightforward application of Bayes' theorem.
Here we show how similar ideas may be applied in the
infinite dimensional setting. We make the following assumption.

\begin{assumption}
\label{a:z} 
The prior distribution on $u_0$ is
a Gaussian $\bbP_0(u_0)=N({\widehat m}_0,{\widehat \cC}_0)$, with
the property that $\bbP_0(H)=1.$
The observational noise sequence 
$\{\xi_j\}_{j \in \N}$ is
an i.i.d sequence in $\Wl$, independent of $u_0$, 
with $\xi_1$ distributed according to a 
Gaussian measure $N(0,\Gamma)$ on $\Wl$, with 
$\Gamma$ strictly positive on $\Wl$.
\end{assumption}

Under this assumption the Bayesian inverse problem has
a well-defined solution, and exhibits well-posedness
with respect to the data, in the Hellinger metric
(see \cite{article:Cotter2009} for a definition).

\begin{theorem}
Let Assumption \ref{a:z} hold.
The measure $\bbP(u_0|Y_j)$ is 
absolutely continuous with respect to $\bbP_0(u_0)$
with Radon-Nikodym derivative given by
\begin{equation}
\label{eq:rnd}
\frac{d\bbP}{d\bbP_0}(u_0|Y_j) \propto 
\exp\Bigl(-\Phi^{(j)}(u_0)\Bigr).
\end{equation}
Here
\begin{equation}
\Phi^{(j)}(w)=\frac12\sum_{i=1}^j \Bigl|\Gamma^{-\frac12}
\Bigl(y_i-\Pl\Psi(w;ih)\Bigr)\Bigr|^2. 
\label{eq:phin}
\end{equation}
Furthermore, $\bbP(u_0|Y_j)$ is Lipschitz in $Y_j$
with respect to the Hellinger metric.
\label{thm:z}
\end{theorem}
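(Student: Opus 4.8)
The plan is to identify Theorem~\ref{thm:z} as an instance of the infinite-dimensional Bayes' theorem together with the associated Hellinger well-posedness estimate established in \cite{article:Cotter2009}, and then to verify the hypotheses of that abstract framework using the regularity and continuity of the Navier--Stokes semigroup recorded in Proposition~\ref{prop:2}. The decisive simplification is that the observation space $\Wl=\Pl H$ is finite dimensional, since $\{k:|2\pi k|^2<\lambda L^2\}$ is a finite set; consequently the noise law $N(0,\Gamma)$ has a genuine Lebesgue density on $\Wl$. Conditional on $u_0$, the observations in \eref{eqn:Observation} are independent with $y_i\sim N(\Pl\Psi(u_0;ih),\Gamma)$, so the data likelihood is proportional to $\exp(-\Phi^{(j)}(u_0))$ with $\Phi^{(j)}$ as in \eref{eq:phin}. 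This pins down the candidate potential and reduces the first assertion to checking finiteness, continuity, and integrability of $\Phi^{(j)}$.

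First I would show $\Phi^{(j)}$ is finite and continuous on $H$; since $\bbP_0(H)=1$ it suffices to treat $u_0\in H$. For each $i\ge1$, Proposition~\ref{prop:2} applied with $t_0\in(0,ih)$ and some $s\in(0,1]$ gives $\Psi(u_0;ih)\in H^s$, so $\Pl\Psi(u_0;ih)\in\Wl$ is well defined and, as $\Gamma^{-1/2}$ is bounded on the finite-dimensional space $\Wl$, each summand of \eref{eq:phin} is finite. Continuity of $u_0\mapsto\Psi(u_0;ih)$ in $H^s$ is the content of the second estimate of Proposition~\ref{prop:2}; composing with the bounded maps $\Pl$ and $\Gamma^{-1/2}$ and using the quadratic structure of \eref{eq:phin} yields continuity, hence measurability, of $\Phi^{(j)}$. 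I would then control the normalisation constant $Z(Y_j):=\int_H\exp(-\Phi^{(j)}(u_0))\,\bbP_0(\rd u_0)$: the bound $Z\le1$ is immediate because $\Phi^{(j)}\ge0$, while continuity of $\Phi^{(j)}$ makes it bounded by some $M<\infty$ on any open ball $B$ centred at the prior mean, and such a ball has positive $\bbP_0$-measure, giving $Z\ge e^{-M}\bbP_0(B)>0$. With $\Phi^{(j)}$ finite, continuous and $Z\in(0,\infty)$, the abstract Bayes' theorem of \cite{article:Cotter2009} applies and delivers \eref{eq:rnd}.

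For the Lipschitz continuity in $Y_j$ I would invoke the Hellinger estimate of the same framework, comparing the posteriors $\mu^{Y}$ and $\mu^{Y'}$ associated with data $Y_j$ and $Y_j'$. Two ingredients are needed: a local Lipschitz bound of $\Phi^{(j)}$ in the data and a lower bound on $Z$ uniform over bounded data. The former follows from the quadratic dependence of \eref{eq:phin} on the $y_i$, which yields $|\Phi^{(j)}(u_0;Y_j)-\Phi^{(j)}(u_0;Y_j')|\le C(1+|Y_j|+|Y_j'|+\sum_i|\Pl\Psi(u_0;ih)|)\,|Y_j-Y_j'|$ with constants depending only on $\Gamma$ and $j$; the latter follows exactly as above since $Z(Y_j)$ is continuous and strictly positive. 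Substituting these into the standard estimate for the Hellinger distance, and using that $\exp(-\Phi^{(j)})\le1$ renders the required integrals against $\bbP_0$ finite, produces $d_{\mathrm{Hell}}(\mu^{Y},\mu^{Y'})\le C(r)\,|Y_j-Y_j'|$ for $|Y_j|,|Y_j'|\le r$, which is the claimed local Lipschitz property.

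I expect the main obstacle to be bookkeeping rather than conceptual: one must carefully track how the constants in Proposition~\ref{prop:2} depend on the observation time $ih$ (which degenerates as $t_0\to0$) and verify the moment integrability underlying the Hellinger estimate, for which the Gaussian moment bounds of $\bbP_0$ combined with the growth $|\Pl\Psi(u_0;ih)|^2\le C(1+|u_0|^2)$ from Proposition~\ref{prop:2} suffice. The essential point that makes all of this routine is the finite-dimensionality of $\Wl$, which removes the delicate measure-theoretic pathologies of genuinely infinite-dimensional observations, so that the real work consists of feeding the Navier--Stokes estimates of Proposition~\ref{prop:2} into the off-the-shelf Bayesian well-posedness machinery of \cite{article:Cotter2009}.
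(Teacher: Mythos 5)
Your proposal is correct and follows essentially the same route as the paper: both reduce the theorem to the abstract Bayesian well-posedness machinery of \cite{article:Cotter2009} (the paper simply cites its Corollary 2.2 and Theorem 2.5), exploiting the finite dimensionality of $\Wl$ and feeding in the two estimates of Proposition \ref{prop:2} to get continuity and polynomial boundedness of $\Phi^{(j)}$, exactly as you do. One small repair: in infinite dimensions, continuity alone does not make $\Phi^{(j)}$ bounded on a ball (balls are not compact), so your lower bound on the normalisation constant $Z$ should instead invoke the growth estimate $|\Pl\Psi(u_0;ih)|^2\le C(1+|u_0|^2)$ that you state later in the proposal --- this is precisely the polynomial boundedness the paper uses when applying Theorem 2.5.
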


\begin{proof} Note that $\Wl$ is a finite
dimensional space. The result can thus
be deduced from Corollary 2.2 and
Theorem 2.5 in \cite{article:Cotter2009}. To apply
Corollary 2.2 it suffices to show that $\Psi(\cdot;ih)\colon H
\to H$ is continuous for any $i \in {\mathbb Z}$
since then $\Pl \Psi(\cdot;ih) \colon H \to \Wl$ is continuous; 
the required
continuity follows from the second item of Proposition
\ref{prop:2}. To apply Theorem 2.5 it suffices to
show that $\Psi(\cdot;ih)\colon H \to H$ is polynomially
bounded, since then $\Phi^{(j)}\colon H \to \mathbb{R}^+$,
and its Lipschitz constant with respect to data $Y_j$, are
both polynomially bounded. This polynomial boundedness of
$\Psi$ follows from the first item of Proposition \ref{prop:2}. 
\end{proof}

The measure $\bbP_j(u_j|Y_j)$ is then defined by
the push-forward under the semigroup $\Psi(\cdot;jh)$ and 
Proposition \ref{prop:2} gives the following corollary:

\begin{corollary}
Let Assumption \ref{a:z} hold. Then
the sequence of measures $\{\bbP_j(u_j|Y_j)\}_{j \ge 0}$
is well-defined by
$$\bbP_j(\cdot|Y_j)=\Psi(\cdot;jh) \star \bbP(\cdot |Y_j)$$
where $\bbP(\cdot|Y_j)$ is given in Theorem \ref{thm:z}.
Furthermore, for $v \sim \bbP_j(\cdot|Y_j)$, $v \in H^1$ 
with probability one. 
\label{cor:z}
\end{corollary}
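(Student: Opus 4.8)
The plan is to assemble the corollary from Theorem~\ref{thm:z}, which furnishes the posterior $\bbP(\cdot|Y_j)$ on the initial condition, and Proposition~\ref{prop:2}, which supplies both the continuity and the instantaneous smoothing of the solution operator $\Psi(\cdot;jh)$. There are two assertions to verify: that the push-forward defining $\bbP_j(\cdot|Y_j)$ is meaningful, and that it charges only $H^1$.

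First I would establish well-definedness. By Theorem~\ref{thm:z} the measure $\bbP(\cdot|Y_j)$ is a Borel probability measure, absolutely continuous with respect to the prior; since Assumption~\ref{a:z} gives $\bbP_0(H)=1$, it assigns full mass to $H$. As already used in the proof of Theorem~\ref{thm:z}, the second estimate of Proposition~\ref{prop:2} makes $\Psi(\cdot;jh)\colon H\to H$ continuous and hence Borel measurable (for $j=0$ the map is the identity, so this is trivial). The push-forward of a Borel probability measure under a measurable map is itself a well-defined Borel probability measure, and this is exactly the asserted formula for $\bbP_j(\cdot|Y_j)$.

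For the regularity claim I would invoke the first estimate of Proposition~\ref{prop:2} with $s=1$. Fixing $j\ge 1$ and choosing any $t_0\in(0,jh)$, that estimate gives $\norm{\Psi(u;jh)}_1^2\le t_0^{-1}C(|f|^2+|u|^2)<\infty$ for \emph{every} $u\in H$, so $\Psi(\cdot;jh)$ maps all of $H$ into $H^1$. Since $\bbP(\cdot|Y_j)$ is concentrated on $H$, its image under $\Psi(\cdot;jh)$ is concentrated on $H^1$; equivalently, with $v=\Psi(u_0;jh)$ and $u_0\sim\bbP(\cdot|Y_j)$, we have $v\in H^1$ almost surely.

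The proof is essentially this assembly, so the one point I would flag explicitly concerns positive time: the smoothing estimate requires $jh>t_0>0$ and therefore applies only for $j\ge 1$, whereas the $j=0$ member of the sequence is just the prior, which Assumption~\ref{a:z} places on $H$ rather than $H^1$; the regularity assertion should accordingly be read for $j\ge 1$. I would also remark that the smoothing here is deterministic---every datum in $H$ is regularized into $H^1$ after a single observation interval---so the ``almost surely'' is in fact automatic and could be strengthened to a statement valid for all $u_0\in H$.
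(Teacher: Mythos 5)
Your proof is correct and takes exactly the route the paper intends: the paper presents the corollary as an immediate consequence of Theorem~\ref{thm:z} (the push-forward is well-defined since $\Psi(\cdot;jh)\colon H\to H$ is continuous, hence Borel measurable, by the second estimate of Proposition~\ref{prop:2}) together with the smoothing estimate of Proposition~\ref{prop:2} with $s=1$, which places $\Psi(u;jh)$ in $H^1$ for every $u\in H$ and $j\ge 1$. Your flag about the $j=0$ case (where the measure is the prior, supported only on $H$) is a legitimate refinement of a point the paper glosses over, but it does not alter the substance of the argument.
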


\subsection{Approximate Gaussian Filters; Partial Observations}
\label{ssec:approx}

The measures $\bbP(u_0|Y_j)$
and $\bbP_j(u_j|Y_j)$ determined in Theorem \ref{thm:z}
and Corollary \ref{cor:z} are, in practice,
very hard to compute by statistical sampling methods.
Sequential Monte Carlo methods (SMC) can in principle
be applied to determine the $\bbP_j(u_j|Y_j)$, but
new ideas are required to extend them to problems
in high dimensions \cite{Bickel}.
The measure $\bbP(u_0|Y_j)$ has the advantage of being
specified via density with respect to a Gaussian
and it is shown in \cite{lawstuart} that,
building on this fact,
Markov chain-Monte Carlo (MCMC) methods can be used successfully
to approximate $\bbP(u_0|Y_j)$
when the dynamics of \eref{eq:nse} are mildly turbulent;
but these methods are very expensive indeed, and do not exploit
the sequential nature of the problem as $j$ is incremented.
Sequential methods are attractive for online applications and
for this reason various {\em ad hoc} approximation methods
are used which lead to tractable sequential algorithms. 
A commonly made approximation is to impose 
a Gaussian structure on the filtering distributions so that
\begin{equation}
\bbP_j(u_j|Y_j) \approx N(\widehat{m}_{j},\widehat{\cC}_j).
\label{eq:appg}
\end{equation}
The key question in designing an approximate Gaussian filter, 
then, is to find an update rule of the form
\begin{equation}
\label{eq:keym}
(\widehat{m}_{j},\widehat{\cC}_j) \mapsto (\widehat{m}_{j+1},\widehat{\cC}_{j+1}).
\end{equation}
Because of the linear form of the observations 
in \eref{eqn:Observation}, together with the fact that 
the noise is mean zero-Gaussian, this update rule
is determined directly if we use the Gaussian
{\em assumption} 
that the distribution of $u_{j+1}$ given $Y_j$ is Gaussian:
\begin{equation}
u_{j+1}|Y_j \sim N(m_{j+1},\cC_{j+1}).
\label{eq:appg2}
\end{equation}
In general, even if the approximation \eref{eq:appg} is
a good one, there is no reason to expect \eref{eq:appg2}
to be a good approximation since the distribution
on $u_{j+1}|Y_j$ is
the pushforward of $\bbP_j(u_{j}|Y_j)$, assumed Gaussian,
under the {\em nonlinear} map $\Psi(\cdot;h)$ and in general
only linear transformations preserve Gaussianity.  
However, in this paper, we will simply {\em impose} the
approximation \eref{eq:appg2} with 
\begin{equation}
m_{j+1}=\Psi(\widehat{m}_{j};h)
\label{eq:mapm}
\end{equation} 
and with $\cC_{j+1}$ specified exogenously.
This then defines the map \eref{eq:keym},
as we now show.

\begin{assumption} Assume that $u_{j+1}|Y_j$ is
specified by \eref{eq:appg2} for
$m_{j+1}$ given by \eref{eq:mapm} and covariance
operator $\cC_{j+1}$ on $H$ which is strictly
positive on $\Wl$ and which commutes
with $A$.\footnote{Note that commuting with $A$ is equivalent
to being diagonalizable in the same basis as $A$, namely
the $\{\psi_k\}_{k \in {\mathcal Z}^2\backslash\{0\}}$.} 
Assume further that $y_{j+1}|u_{j+1}$
is given by \eref{eqn:Observation} where
the random variable $\xi_{j+1}$ is
a mean zero Gaussian in $\Wl$
with covariance $\Gamma$ a
strictly positive operator on $\Wl$ which
commutes with $A$.
\label{a:zz}
\end{assumption}

Note that the Gaussian $N(m_{j+1},\cC_{j+1})$
factors as the product of two independent Gaussians
on $\Wl$ and $\Wc$, because $\cC_{j+1}$ 
commutes with $A$; to avoid proliferation of
notation, we also denote by $\cC_{j+1}$ the
covariance operator restricted to $\Wl$ and $\Wc$.
The factoring as independent
products is inherited by the resulting Gaussian approximation
for $u_{j+1}|Y_{j+1}$ as the following charactertization shows.

\begin{theorem} Let Assumption \ref{a:zz} hold.
Then $u_{j+1}|Y_{j+1}$ is 
Gaussian on $H$ and factors as the product of
two indepenent Gaussians on $\Wl$ and $\Wc$. 
Denoting the mean and covariance of
both of these independent Gaussians by $\hu_{j+1}$ 
and $\hC_{j+1}$ respectively we have
 

\begin{eqnarray*} \begin{array}{ccccc}
\hC_{j+1}&= \cC_{j+1}, \quad \quad ~~ 
&\quad  \hu_{j+1}&= \Ql \Psi(\hu_{j}),  
~~~ \quad \quad \quad \quad \quad
&\quad {\rm {on}} \,\,\Wc\\
{\widehat {\cC}}_{j+1}^{-1}&=\cC_{j+1}^{-1}+\Gamma^{-1}, &\quad 
{\widehat {\cC}}_{j+1}^{-1}\widehat{m}_{j+1} &= \cC_{j+1}^{-1} \Pl\Psi(\widehat{m}_{j}) + \Gamma^{-1}y_{j+1},
&\quad {\rm {on}} \,\,\Wl.
\end{array}
\end{eqnarray*}
\label{t:app}
\end{theorem}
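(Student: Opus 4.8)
The plan is to exploit two structural facts already built into Assumption \ref{a:zz}: that $H$ splits orthogonally as $\Wl \oplus \Wc$ with the predictive covariance block-diagonal with respect to this splitting, and that the observation $y_{j+1}$ in \eref{eqn:Observation} sees only the $\Wl$-component of $u_{j+1}$. Together these reduce the claim to a standard finite-dimensional Gaussian conditioning on $\Wl$, with the $\Wc$-component left untouched. First I would record the product structure of the predictive measure: by Assumption \ref{a:zz} the law of $u_{j+1}|Y_j$ is $N(m_{j+1},\cC_{j+1})$ with $m_{j+1}=\Psi(\hu_j)$, and since $\cC_{j+1}$ commutes with $A$ it is diagonal in the basis $\{\psi_k\}$ and maps each of $\Wl$, $\Wc$ into itself. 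Hence $N(m_{j+1},\cC_{j+1})$ factors as the independent product of $N(\Pl m_{j+1},\cC_{j+1})$ on $\Wl$ and $N(\Ql m_{j+1},\cC_{j+1})$ on $\Wc$ (using the paper's convention of writing $\cC_{j+1}$ for the relevant restriction), so that $\Pl u_{j+1}$ and $\Ql u_{j+1}$ are independent given $Y_j$.

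Next I would condition via Bayes' theorem, writing $\bbP(u_{j+1}|Y_{j+1}) \propto \bbP(y_{j+1}|u_{j+1})\,\bbP(u_{j+1}|Y_j)$. Because the likelihood $\bbP(y_{j+1}|u_{j+1})$ depends on $u_{j+1}$ only through $\Pl u_{j+1}$ — the noise $\xi_{j+1}$ living in $\Wl$ and being independent of $u_{j+1}$ — the $\Wc$-factor of the prior passes through the update unchanged. This at once yields the $\Wc$ entries, namely $\hu_{j+1}=\Ql m_{j+1}=\Ql\Psi(\hu_j)$ and $\hC_{j+1}=\cC_{j+1}$ on $\Wc$, and confirms that the posterior again factors as an independent product over $\Wl$ and $\Wc$.

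It then remains to identify the $\Wl$-factor, and here one works entirely inside the finite-dimensional space $\Wl$, where Lebesgue densities exist. Writing $x=\Pl u_{j+1}$ and noting that the observation operator restricted to $\Wl$ is the identity, the posterior density on $\Wl$ is proportional to
$$\exp\Bigl(-\frac12\bigl|\Gamma^{-\frac12}(y_{j+1}-x)\bigr|^2 -\frac12\bigl|\cC_{j+1}^{-\frac12}(x-\Pl\Psi(\hu_j))\bigr|^2\Bigr).$$
Expanding the two quadratic forms in $x$ and completing the square produces a Gaussian whose precision is the sum of the two precisions, $\hC_{j+1}^{-1}=\cC_{j+1}^{-1}+\Gamma^{-1}$, and whose mean is the precision-weighted combination $\hC_{j+1}^{-1}\hu_{j+1}=\cC_{j+1}^{-1}\Pl\Psi(\hu_j)+\Gamma^{-1}y_{j+1}$, exactly as asserted.

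I do not anticipate a serious obstacle, since the $\Wl$ computation is the classical conjugate Gaussian update. The only points genuinely requiring care are infinite-dimensional bookkeeping: justifying the product decomposition of the predictive Gaussian and the ``pass-through'' of the $\Wc$-component. Both follow cleanly from the commutation of $\cC_{j+1}$ with $A$, which makes the covariance block-diagonal across $\Wl\oplus\Wc$, together with the finite-dimensionality of $\Wl$, which legitimizes the density-level completion of the square there. I would state the $\Wc$ result purely as conditional independence (avoiding any attempt to write a density on the infinite-dimensional factor) and confine all explicit Gaussian algebra to $\Wl$.
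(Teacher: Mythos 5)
Your proposal is correct, and its skeleton matches the paper's proof: exploit the block-diagonal (product) structure of the predictive Gaussian across $\Wl\oplus\Wc$, observe that the likelihood depends on the state only through $\Pl u_{j+1}$ so that the $\Wc$-factor is untouched, and finish with the classical completion-of-the-square on the finite-dimensional space $\Wl$. Where you differ is in how the conditioning step itself is made rigorous. The paper works with the joint law $\nu$ of $\bigl(u_{j+1}|Y_j,\,y_{j+1}\bigr)$, computes its Radon--Nikodym derivative with respect to the reference product measure $\nu_0=N\bigl(\Psi(\hu_j),\cC_{j+1}\bigr)\otimes N(0,\Gamma)$, and then invokes Lemma 2.3 of \cite{article:Cotter2009} to read off the conditional measure as a reweighting of the prior by $\exp\bigl(-\frac12\bigl|\Gamma^{-\frac12}(y-\Pl v)\bigr|^2\bigr)$; the factorization on $\Wl\oplus\Wc$ then follows because this density depends only on $\Pl v$. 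You instead bypass that abstract conditioning lemma: since the prior factors and $\xi_{j+1}$ is independent of $u_{j+1}$, the component $\Ql u_{j+1}$ is independent of the pair $\bigl(\Pl u_{j+1},y_{j+1}\bigr)$ given $Y_j$, so conditioning on $y_{j+1}$ leaves its law unchanged, and all density-level Bayes algebra is confined to $\Wl$ where Lebesgue densities exist. Your closing caveat is the right one: the display $\bbP(u_{j+1}|Y_{j+1})\propto\bbP(y_{j+1}|u_{j+1})\,\bbP(u_{j+1}|Y_j)$ has no literal meaning on the infinite-dimensional space $H$, so the conditional-independence formulation is not optional polish but the actual proof of the $\Wc$ pass-through. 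The trade-off is minor: your route is more elementary and self-contained for this specific linear, finite-rank observation operator, while the paper's route via the reweighting lemma is the one that generalizes to observation functionals depending on all of $v$, which is presumably why the authors chose it.
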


\begin{proof} For economy of notation,
let $v$ denote the random variable $u_{j+1}|Y_j$
and $y$ the random variable $y_{j+1}$.
Under the stated assumptions, $(v,y)$ is a jointly 
Gaussian random variable and we are interested 
in finding the conditional distribution of 
$v|y$ (which is the desired 
distribution of $u_{j+1}|Y_{j+1}.$) Let $\nu(dv,dy)$ denote
the Gaussian distribution of $(v,y)$ and let
$\nu_0(dv,dy)$ denote the Gaussian distribution specified
as the independent product of the Gaussian
measures $\mu_0=N(m_{j+1},\cC_{j+1})$ and
$N(0,\Gamma).$ From the properties of finite
dimensional Gaussians it is immediate that $\nu$ has
density with respect to $\nu_0$ and that
$$\frac{d\nu}{d\nu_0}(v,y)=\exp\Bigl(-\frac12\bigl|
\Gamma^{-\frac12}(y-\Pl v)\bigr|^2+\frac12\bigl|\Gamma^{-\frac12}
y\bigr|^2\Bigr).$$
If $\mu$ is the desired conditional distribution on $v|y$,
then Lemma 2.3 in \cite{article:Cotter2009} gives
$$\frac{d\mu}{d\mu_0}(v) \propto \exp\Bigl(-\frac12\bigl|
\Gamma^{-\frac12}(y-\Pl v)\bigr|^2\Bigr)$$
with constant of proportionality depending only on $y$.
Note that $\mu_0$ factors in the desired fashion on $\Wl$
and $\Wc$, and that the change of measure depends 
only on $\Pl v$; hence $\mu$ factors in the same fashion.
Furthermore, since the change of meausure depends only on (the
finite dimensional) $\Pl v \in \Wl$, completing the square
gives the expression for the measure in $\Wl$, and
in $\Wc$ we have $\mu=\mu_0$. This
completes the proof.
\end{proof}

It is demonstrated numerically in \cite{lawstuart} that 
Gaussian approximations of the filtering distribution
such as the one characterized in the
preceding theorem are, in general, not good approximations.
More precisely, they fail to accurately capture
covariance information. However, the same numerical experiments
reveal that the methodology can perform well in
replicating the mean, if parameters are chosen correctly,
even if it is initially in error. Indeed this accurate
tracking of the mean is often achieved by means of
{\em variance inflation} -- increasing the model uncertainty,
here captured in the exogenously imposed $\cC_j$, 
in comparison with the 
data uncertainty, here captured in the $\Gamma$.
The purpose of the remainder of the paper is to
explain, and illustrate, this phenomenon by means of
analysis and numerical experiments. To this 
end we introduce a compact notation for  the mean update $\hu_{j}
\mapsto \hu_{j+1}.$

We define an operator $B_j \colon \Wl \times \Wc \mapsto
\Wl \times \Wc$ by
\begin{eqnarray}
B_j=\left(
\begin{array}{cc}
{\widehat \cC}_{j+1}\cC_{j+1}^{-1} & 0\\
0 & {\widehat \cC}_{j+1}\cC_{j+1}^{-1}
\end{array}
\right)
\label{eq:oldB}
\end{eqnarray}
where ${\widehat \cC}_{j+1}$ and $\cC_{j+1}$
denote the covariances in $\Wl$ (resp. $\Wc$)
in the top left (resp. bottom right) entries of $B_j$. 
We also
extend $y_{j+1}$ from an element of $\Wl$ to an
element of $H$ in the
canonical fashion, by defining it to be zero in $\Wc$. Then
Theorem \ref{t:app} yields the key equation
\begin{equation}
\label{eq:meanu}
\widehat{m}_{j+1}=B_j\Psi(\widehat{m}_j;h)+(I-B_j)y_{j+1},
\end{equation}
which demonstrates that the estimate of
the mean at time $j+1$ is found as an operator-convex
combination of the true dynamics applied to the estimate
of the mean at time $j$, and the data at time $j+1$.
Note that, in the case of a linear dynamical system where
$\Psi(\cdot;h)$ is a linear map, the matrix $I-B_j$ is
the {\em Kalman gain} matrix \cite{harvey1991forecasting}.

\subsection{Approximate Gaussian Filters; Complete Observations}
\label{ssec:fully}

We will also study the situation where
complete observations are made, obtained by taking $\lambda
\to \infty$ in the preceding analyses.
The observations are given by
\begin{equation}
\label{eqn:Observation2}
y_{j} := u_{j} + {\xi}_{j}, \quad j \in \Z^{+}
\end{equation}
where now $y_j, \xi_j \in H$ and the i.i.d. mean
zero Gaussian sequence
$\{\xi_j\}_{j \in \N}$ 
is defined by a covariance operator $\Gamma$ on $H$.

It is possible to characterize the full filtering
distribution in this setting, but rather technical
to do so; the example of the inverse problem for the heat
equation in \cite{article:Stuart2010} illustrates the
technicalities involved.
Because of this we concentrate on studying only the
equation for the mean update in the approximate Gaussian
filter. This takes the form \eref{eq:meanu} where, on the
whole of $H$, we have
\begin{equation}
\label{eq:meanf}
B_j={\widehat \cC}_{j+1}\cC_{j+1}^{-1}, \quad
{\widehat {\cC}}_{j+1}^{-1}=\cC_{j+1}^{-1}+\Gamma^{-1}. 
\end{equation}

\subsection{Example of an Approximate Gaussian Filter: 3DVAR}
\label{ssec:3DVar}

The algorithm described in the previous section yields
the well-known 3DVAR method, discussed in the introduction,
when $\cC_j \equiv \cC$ for some fixed operator $\cC$.
To impose commutativity with $A$, we assume that 
the operators $\Gamma$ and $\cC$
are both fractional powers of the Stokes operator $A$, in
$\Wl$ and $H$ respectively.
Note that fixing $\cC_j \equiv \cC$ implies that
${\widehat \cC}_j \equiv {\widehat \cC}$ where
\begin{eqnarray*}
\begin{array}{cc}
{\widehat \cC}^{-1}&=\cC^{-1} + \Gamma^{-1},\quad {\rm in}\,\,\Wl\\
{\widehat \cC}^{-1}&=\cC^{-1},\quad \quad \quad ~~ {\rm in}\,\,\Wc.
\end{array} \end{eqnarray*}

We choose $A_0=\ell A$\footnote{The parameter $\ell$ forms
a useful normalizing constant in the numerical experiments
of section \ref{sec:numerics}.} and
set $\cC =\delta^2 A_0^{-2\zeta}$ in $H$ and 
$\Gamma = \sigma^2 A_0^{-2\beta}$ in $\Wl.$ Substituting 
into the update formula \eref{eq:meanu}
for $\widehat{m}_j$ and defining 
$\eta = \sigma / \delta,$ $\alpha = \zeta - \beta$,
$\Bl=(I+\eta^2 A_0^{2\alpha})^{-1}\eta^2 A_0^{2\alpha}$ in $\Wl$ 
then \eref{eq:oldB} gives a constant matrix 
\begin{eqnarray}
B=\left(
\begin{array}{cc}
\Bl & 0\\
0 & I 
\end{array}
\right).
\label{eq:B}
\end{eqnarray}
Using this we obtain the mean update formula
\begin{equation}
\hu_{j+1} = B\Psi(\hu_j) + ( I- B) y_{j+1}.
\label{eq:up2}
\end{equation}
Notice that for $\cC, \Gamma$ given as above, 
the algorithm depends only on the three parameters
$\lambda, \alpha$ and $\eta$, once the constant
of proportionality $\ell$ in $A_0$ is set. The parameter $\lambda$
measures the size of the space in which observations
are made; for fixed wavevector $k$, the parameter $\eta$ 
is a measure of the
scale of the uncertainty in observations to uncertainty
in the model; and the sign of the parameter $\alpha$
determines whether, for fixed $\eta$ and asymptotically 
for large wavevectors,
the model is trusted more ($\alpha>0$) or less ($\alpha<0$)
than the data.

In the case $\lambda=\infty$, the case of complete
observations where the whole velocity field 
is noisily observed, we again obtain \eref{eq:up2}, with 
$B=\Bl=(I+\eta^2 A_0^{2\alpha})^{-1}\eta^2 A_0^{2\alpha}$ in $H$.The roles of $\eta$ and $\alpha$ are the same as in the
finite $\lambda$ (partial observations) case.

The discussion concerning parametric
dependence with respect to varying $\eta$ shows that,
for the example of 3DVAR introduced here, and
for both $\lambda$ finite and infinite, 
variance inflation can be achieved by decreasing
the parameter $\eta.$ We will show that variance inflation 
does indeed improve the ability of the filter to track 
the signal.

\section{Stability}
\label{sec:stability}

In this section we develop conditions under which 
it is possible to prove
stability of the nonautonomous dynamical
system defined by the mean update equation \eref{eq:meanu}. 
By stability we here mean that, when the noise
perturbing the observations is ${\mathcal O}(\epsilon)$, 
the mean update
will converge to an ${\mathcal O}(\epsilon)$ neighbourhood 
of the true signal, even if initially it is an ${\mathcal O}(1)$
distance from the true signal.
In subsection \ref{ssec:m1} we study the case of partial
observations; subsection \ref{ssec:m2} contains the
(easier) result for the case of complete observations.
The third subsection \ref{ssec:s2} shows how
our results can be applied to the specific
instance of the 3DVAR algorithm introduced
in subsection \ref{ssec:3DVar}, for any $\alpha \in {\mathbb R}$,
provided $\eta$, which is a measure of uncertainty
in the data to uncertainty in the model, is sufficiently small:
this, then, is a result concerning variance inflation.
In the final subsection \ref{ssec:spde} 
we also consider 3DVAR in the case of
frequent observations and large $\eta$, deriving an
SPDE and a PDE, both of
which may be used to study filter stability.

For simplicity, we will assume a ``truth'' which is on
the global attractor, as in \cite{hayden2011discrete}. This
is not necessary, but streamlines the presentation
as it gives an automatic uniform in time bound in $H^1$. 
Recall that $\|\cdot\|$ denotes the norm in $H^1$,
and $|\cdot|$ the norm in $H$; similarly we lift
$\|\cdot\|$ to denote the induced operator norm
$H^1 \to H^1$.

\subsection{Main Result: Partial Observations}
\label{ssec:m1}

In this case we will see that it is crucial that the 
observation space $\Wl$ is sufficiently large, i.e. 
that a sufficiently large number of modes are observed. 
This, combined with the contractivity in the high modes 
encapsulated in Proposition \ref{prop:3} 
from \cite{hayden2011discrete},
can be used to ensure stability if combined with variance
inflation.  

We study filters of the form given in \eref{eq:meanu}
and make the following assumption on the observations $\{y_j\}.$ 
We note that this assumption is incompatible with the Gaussian
assumptions used to derive the filter and we return to 
this point when describing numerical results 
in section \ref{sec:numerics}.

\begin{assumption}
\label{a:1}
Consider a sequence $u_j=u(jh)$ where $u(t)$ is a solution
of \eref{eq:nse} lying on the global attractor $\cA$. 
Then, for some $\lambda \in (\lambda_1,\infty)$,
$$y_j=\Pl u_j+\xi_j$$
for some sequence $\xi_j$ satisfying $\sup_{j \ge 1}\|\xi_j\|
\le \epsilon.$
\end{assumption}

We make the following assumption about the family $\{\Be\}$,
and assumed dependence on a parameter $\eta \in {\mathbb R}^+.$
Recall that the inverse of $\eta$ 
quantifies the amount of variance inflation. 

\begin{assumption}
\label{a:2}
The family of positive operators $\{\Be(\eta)\colon H^1 \to H^1\}_{j \ge 1}$ 
commute with $A$, satisfy $\sup_{j \ge 1}\|\Be(\eta)\| \le 1$, and 
$\sup_{j \ge 1}\|I-\Be(\eta)\| \le b$ for some $b \in {\mathbb R}^+,$
uniformly with respect to $\eta$.
Furthermore, $\bigl(I-B_j(\eta)\bigr)\Ql \equiv 0$ 
and there is, for all $\lambda>\lambda_1$, constant 
$c=c(\lambda)>0$ such that $\sup_{j \ge 1}\|\Pl \Be(\eta)\| \le c\eta^2.$
\end{assumption}

We now study the asymptotic behaviour of the filter
under these assumptions. 

\begin{theorem}
\label{t:m}
Let Assumptions \ref{a:1} and \ref{a:2} hold, choose
any $\hu_0 \in \mathbb{B}_{H^1}\bigl(u(0),r\bigr)$ and
let $(\lambda^*,t^*)$ be as given in Proposition \ref{prop:3}.
Assume that $\lambda>\lambda^*$.  Then for any $h \in (0,t^*]$
there is $\eta$ sufficiently 
small so that the sequence
$\{\hu_j\}_{j \ge 0}$ given by \eref{eq:meanu} satisfies,
for some $a \in (0,1)$,
$$\|\hu_j-u_j\| \le a^j r+2b\epsilon\sum_{i=0}^{j-1}
a^i.$$ 
Hence
$$\limsup_{j \to \infty}\|\hu_j-u_j\| \le \frac{2b}{1-a}\epsilon.$$ 
\end{theorem}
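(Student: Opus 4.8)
The plan is to track the error $e_j := \hu_j - u_j$ and to show that it obeys a contractive affine recursion, so that the two terms on the right of the claimed bound arise respectively from the decaying memory of the initial condition and from the accumulated observational noise. First I would substitute the truth $u_{j+1}=\Psi(u_j)$ and the observation model $y_{j+1}=\Pl u_{j+1}+\xi_{j+1}$ (extended by zero on $\Wc$) into the mean update \eref{eq:meanu}, and write $\Psi(\hu_j)=\bigl(\Psi(\hu_j)-\Psi(u_j)\bigr)+\Psi(u_j)$. Collecting the terms that involve only the truth gives
\[
e_{j+1}=B_j\bigl(\Psi(\hu_j)-\Psi(u_j)\bigr)-(I-B_j)\Ql u_{j+1}+(I-B_j)\xi_{j+1}.
\]
The key cancellation is that the middle term vanishes identically, since Assumption \ref{a:2} gives $(I-B_j)\Ql\equiv 0$; this expresses the fact that the filter makes no correction in the unobserved high modes, so the high-mode content of the truth drops out of the error entirely. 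Hence
\[
e_{j+1}=B_j\bigl(\Psi(\hu_j)-\Psi(u_j)\bigr)+(I-B_j)\xi_{j+1}.
\]

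Next I would estimate the two surviving terms in the $H^1$ norm. Because $B_j$ commutes with $A$ it commutes with $\Pl$ and $\Ql$, so I split the first term into its low- and high-mode parts. For the low modes I use $\|\Pl B_j\|\le c\eta^2$ from Assumption \ref{a:2} together with the global Lipschitz bound \eref{eq:1}, which applies since $u_j\in\cA$ and $\hu_j\in H^1$, producing a contribution of at most $c\eta^2\exp(\beta h/2)\|e_j\|$. For the high modes I use $\|B_j\|\le 1$ and the squeezing estimate \eref{eq:2}, which supplies a factor $\gamma\in(0,1)$ provided $\lambda>\lambda^*$, $h\in(0,t^*]$ and $\|e_j\|\le R$. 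The noise term is controlled by $\|I-B_j\|\le b$ and Assumption \ref{a:1}, giving $\|(I-B_j)\xi_{j+1}\|\le b\epsilon$. Collecting these (and keeping a factor of two as headroom for the induction below, which costs nothing since $b\epsilon\le 2b\epsilon$) yields
\[
\|e_{j+1}\|\le a\|e_j\|+2b\epsilon,\qquad a:=\gamma+c\eta^2\exp(\beta h/2).
\]
I would then fix $\eta$ small enough, uniformly in $j$, that $c\eta^2\exp(\beta h/2)<1-\gamma$, so that $a\in(0,1)$; this is exactly the point at which variance inflation (small $\eta$) enters, and it is possible because $\gamma<1$ is pinned down by Proposition \ref{prop:3} independently of $\eta$.

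The delicate part, and the main obstacle, is that the contraction \eref{eq:2} is only available while $\|e_j\|\le R$, whereas $R$ is precisely the radius on which $\lambda^*$, $t^*$ and $\gamma$ depend in Proposition \ref{prop:3}. I would close this by induction: choosing the radius $R$ in Proposition \ref{prop:3} at least as large as $r+\tfrac{2b\epsilon}{1-a}$, I assume $\|e_i\|\le R$ for all $i\le j$, apply the recursion to obtain $\|e_{j+1}\|\le aR+2b\epsilon\le R$ once $\epsilon$ is small enough that $2b\epsilon\le(1-a)R$, and so propagate the hypothesis from $\|e_0\|\le r$. Making this logically consistent — fixing $R$ first, then requiring $\lambda>\lambda^*(R)$ and $h\in(0,t^*(R)]$, and only then taking $\eta$ (and $\epsilon$) small — is the genuine bookkeeping subtlety, rather than any single hard estimate. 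With the recursion valid for every $j$, iterating $\|e_{j+1}\|\le a\|e_j\|+2b\epsilon$ from $\|e_0\|\le r$ gives
\[
\|e_j\|\le a^j r+2b\epsilon\sum_{i=0}^{j-1}a^i,
\]
and letting $j\to\infty$ with $\sum_{i=0}^{\infty}a^i=(1-a)^{-1}$ produces the stated $\limsup$ bound $\tfrac{2b}{1-a}\epsilon$.
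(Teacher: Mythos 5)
Your proposal is correct and follows essentially the same route as the paper's proof: the same error recursion $e_{j+1}=B_j\bigl(\Psi(\hu_j)-\Psi(u_j)\bigr)+(I-B_j)\xi_{j+1}$ obtained via $(I-B_j)\Ql\equiv 0$, the same $\Pl$/$\Ql$ splitting combining the variance-inflation bound $\|\Pl B_j\|\le c\eta^2$ with \eref{eq:1} on the low modes and the squeezing bound \eref{eq:2} on the high modes, and the same induction (the paper fixes $R=2r$ and requires $\epsilon$ small, exactly your bookkeeping) to keep $\|e_j\|\le R$ so that Proposition \ref{prop:3} remains applicable. The only differences are cosmetic: the paper carries a $b\epsilon$ term in each projection (yielding $2b\epsilon$) where you bound the noise once and inflate, and its inductive hypothesis is the displayed geometric-sum bound rather than the ball condition, but these are equivalent.
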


\begin{proof} 
Assumption \ref{a:2} shows that
$y_{j+1}=\Pl\Psi(u_j)+\xi_{j+1}.$ 
Recall that in \eref{eq:meanu} $y_{j+1}$
has been extended to an element of $H$, by defining it to
be zero in $\Wc$, and we do the same with $\xi_{j+1}.$
Substituting the resulting expression for $y_{j+1}$ in
\eref{eq:meanu} we obtain
$$\hu_{j+1}=\Be \Psi(\hu_j)+(I-\Be)\Pl\Psi(u_j)+(I-\Be)\xi_{j+1}
$$
but since $(I-\Be)\Ql \equiv 0$ by assumption we have
\begin{equation}
\label{eq:need}
\hu_{j+1}=\Be \Psi(\hu_j)+(I-\Be)\Psi(u_j)+(I-\Be)\xi_{j+1}.
\end{equation}
Note also that
$$u_{j+1}=\Be \Psi(u_j)+(I-\Be)\Psi(u_j).$$
Subtracting gives the basic equation for error propagation,
namely
\begin{equation}
\hu_{j+1}-u_{j+1}=\Be\bigl(\Psi(\hu_j)-\Psi(u_j)\bigr)
+(I-\Be)\xi_{j+1}.
\label{eq:error}
\end{equation}

Since $\lambda>\lambda^{\star}$ the second
item in Proposition \ref{prop:3} holds.
Fix $a \in (\gamma,1)$ where $\gamma$ is defined in
Proposition \ref{prop:3}.  Assume,
for the purposes of induction, that 
$$\|\hu_j-u_j\| \le a^j r+2b\epsilon\sum_{i=0}^{j-1}
a^i.$$ 
Define $R=2r$ noting that
the inductive hypothesis implies that, for $\epsilon$
sufficiently small, 
$\|\hu_j-u_j\| \le r+2b(1-a)^{-1}\epsilon \le R.$
Applying $\Pl$ to \eref{eq:error} and using \eref{eq:1} gives
\begin{eqnarray*}
\begin{array}{cc}
\|\Pl(\hu_{j+1}-u_{j+1})\| &\le
\|\Pl\Be\|\|\bigl(\Psi(\hu_j)-\Psi(u_j)\bigr)\|
+\|\Pl(I-\Be)\|\epsilon\\
&\le c(\lambda) \eta^2 \exp(\beta h/2)\|\hu_j-u_j\|+b\epsilon.
\end{array}
\end{eqnarray*}
Applying $\Ql$ to \eref{eq:error} and using \eref{eq:2}
gives\footnote{The term $b\epsilon$ on the right-hand side of
the final identity can here be set to zero because 
$(I-\Be)\Ql \equiv 0$; however in the
analogous proof of Theorem \ref{t:mz} it is present and
so we retain it for that reason.}
\begin{eqnarray*}
\begin{array}{cc}
\|\Ql(\hu_{j+1}-u_{j+1})\| &\le
\|\Be\|\|\Ql\bigl(\Psi(\hu_j)-\Psi(u_j)\bigr)\|
+\|\Ql(I-\Be)\|\epsilon\\
&\le \gamma\|\hu_j-u_j\|+b\epsilon.
\end{array}
\end{eqnarray*}
Now note that, for any $w \in H^1$,
$\|w\| =\bigl(\|\Pl w\|^2+\|\Ql w\|^2\bigr)^{\frac12}
\le \|\Pl w\|+\|\Ql w\|.$
Thus, by adding the two previous inequalities, we find
that
$$\|\hu_{j+1}-u_{j+1}\| \le 
\bigl(c(\lambda) \eta^2 \exp(\beta h/2)+\gamma\bigr)
\|\hu_j-u_j\|+2b\epsilon.$$
Since $\gamma\in (0,1)$ and $a \in (\gamma,1)$, we may
choose $\eta$ sufficiently small so that 
$$\|\hu_{j+1}-u_{j+1}\| \le a\|\hu_j-u_j\|+2b\epsilon.$$
and the inductive hypothesis holds with $j \mapsto j+1$.
Taking $j \to \infty$ gives the desired result concerning
the limsup.
\end{proof}

\begin{remark}
\label{rem:n}

Note that the proof exploits the fact that
$B_j\Psi(\cdot)$ induces a contraction within 
a finite ball in $H^1$. This contraction
is established by means of the contractivity of $B_j$
in $\Wl$, via variance inflation,
and the squeezing property of $\Psi(\cdot)$ in $\Wc$,
for large enough observation space,
from Proposition \ref{prop:3}. 

There are two important conclusions from this theorem. The
first is that, even though the solution is only
observed in the low modes, there is sufficient
contraction in the high modes to obtain an error
in the entire estimated state which is of the same order
of magnitude as the error in the (low mode only) observations.
The second is that this phenomenon occurs even when
the initial estimate suffers from an ${\mathcal O}(1)$ error.

\label{rem:1}
\end{remark}

\subsection{Main Result: Complete Observations}
\label{ssec:m2}

Here we study filters of the form given in \eref{eq:meanu}
with observations given by \eref{eqn:Observation2}.
In this situation  the whole velocity field is
observed and so, intuitively, it should be
no harder to obtain stability than in the partially
observed case. The proof is in fact almost identical
to the case of partial observations, and so we omit
the details. We observe that, although there is no parameter
$\lambda$ in the problem statement itself, 
it is introduced in the proof: as in the previous
subsection, see Remark \ref{rem:n},
 the key to stability is to obtain
contraction in $\Wc$ using the squeezing property of
the Navier-Stokes equation, and contraction in
$\Wl$ using the properties of the filter to control
unstable modes. 

We make the following assumptions: 

\begin{assumption}
\label{a:1z}
Consider a sequence $u_j=u(jh)$ where $u(t)$ is a solution
of \eref{eq:nse} lying on the global attractor $\cA$. 
Then
$$y_j=u_j+\xi_j$$
for some sequence $\xi_j$ satisfying $\sup_{j \ge 1}\|\xi_j\|
\le \epsilon.$
\end{assumption}

\begin{assumption}
\label{a:2z}
The family of positive operators $\{\Be(\eta)\colon H^1 \to H^1\}_{j \ge 1}$ 
commute with $A$, satisfy $\sup_{j \ge 1}\|\Be(\eta)\| \le 1$, and 
$\sup_{j \ge 1}\|I-\Be(\eta)\| \le b$ for some 
$b \in {\mathbb R}^+,$ uniformly with respect to $\eta.$
Furthermore, for all $\lambda>\lambda_1$,  there is a constant 
$c=c(\lambda)>0$ such that 
$\sup_{j \ge 1}\|\Pl \Be(\eta)\| \le c\eta^2.$
\end{assumption}

We now study the asymptotic behaviour of the filter
under these assumptions. 

\begin{theorem}
\label{t:mz}
Let Assumptions \ref{a:1z} and \ref{a:2z} hold and choose
any $\hu_0 \in \mathbb{B}_{H^1}\bigl(u(0),r\bigr).$ 
Then for any $h \in (0,t^*]$, with $t^*$ given in 
Proposition \ref{prop:3}, there is $\eta$ sufficiently 
small so that the sequence
$\{\hu_j\}_{j \ge 0}$ given by \eref{eq:meanu} satisfies, for
some $a \in (0,1)$,
$$\|\hu_j-u_j\| \le a^j r+2b\epsilon\sum_{i=0}^{j-1}
a^i.$$ 
Hence
$$\limsup_{j \to \infty}\|\hu_j-u_j\| \le \frac{2b}{1-a}\epsilon.$$ 
\end{theorem}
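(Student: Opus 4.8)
The plan is to mirror the argument for Theorem \ref{t:m}, the only structural novelty being that $\lambda$ no longer appears in the hypotheses and so must be introduced within the proof. First I would substitute the complete observation $y_{j+1}=\Psi(u_j)+\xi_{j+1}$ from Assumption \ref{a:1z} into the mean update \eref{eq:meanu}. Since the observations now live in all of $H$, no projection $\Pl$ intervenes, and subtracting $u_{j+1}=\Psi(u_j)=\Be\Psi(u_j)+(I-\Be)\Psi(u_j)$ yields the same error equation
$$\hu_{j+1}-u_{j+1}=\Be\bigl(\Psi(\hu_j)-\Psi(u_j)\bigr)+(I-\Be)\xi_{j+1}$$
as in \eref{eq:error}, now with $\xi_{j+1}\in H$.

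Next, with $R=2r$, I would choose any $\lambda>\lambda^*$, where $\lambda^*$ is the threshold of Proposition \ref{prop:3} associated with $R$; this introduces the splitting $H=\Wl\oplus\Wc$ used to separate the controlled (low) modes from the contracting (high) modes. With $\lambda$ so fixed, Proposition \ref{prop:3} supplies $t^*$ and $\gamma\in(0,1)$, and I would fix $a\in(\gamma,1)$ and run the induction on $\|\hu_j-u_j\|\le a^j r+2b\epsilon\sum_{i=0}^{j-1}a^i$, noting as before that this bound keeps $\|\hu_j-u_j\|\le R$ once $\epsilon$ is small. Projecting the error equation with $\Pl$ and invoking \eref{eq:1} together with $\sup_j\|\Pl\Be\|\le c\eta^2$ from Assumption \ref{a:2z} gives $\|\Pl(\hu_{j+1}-u_{j+1})\|\le c(\lambda)\eta^2\exp(\beta h/2)\|\hu_j-u_j\|+b\epsilon$, while projecting with $\Ql$ and invoking the squeezing estimate \eref{eq:2} with $\|\Be\|\le1$ gives $\|\Ql(\hu_{j+1}-u_{j+1})\|\le\gamma\|\hu_j-u_j\|+b\epsilon$. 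Adding these via $\|w\|\le\|\Pl w\|+\|\Ql w\|$, then choosing $\eta$ small enough that $c(\lambda)\eta^2\exp(\beta h/2)+\gamma\le a$, yields $\|\hu_{j+1}-u_{j+1}\|\le a\|\hu_j-u_j\|+2b\epsilon$, which closes the induction and, on letting $j\to\infty$, gives the limsup exactly as in Theorem \ref{t:m}.

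The single genuine point of departure from the partial-observation proof is the high-mode noise term: in Theorem \ref{t:m} the contribution of $(I-\Be)\xi_{j+1}$ to the $\Ql$ estimate vanished because Assumption \ref{a:2} imposed $(I-\Be)\Ql\equiv0$ and $\xi_{j+1}$ was supported in $\Wl$, whereas here $\xi_{j+1}\in H$ carries high-mode content and that hypothesis is absent, so the $b\epsilon$ term must be retained in the $\Ql$ bound. This is precisely the eventuality flagged in the footnote to the proof of Theorem \ref{t:m}; since the total $2b\epsilon$ already appears there, it changes nothing in the final estimate. I therefore do not expect a substantive obstacle. The only thing requiring genuine care is the order of quantifiers: $\lambda$ must be selected (large, depending on $R=2r$) before $t^*$ and $\gamma$ are extracted from Proposition \ref{prop:3}, the admissible $h\in(0,t^*]$ is then fixed, and $\eta$ is chosen small last, depending on the now-determined $\lambda$ and $h$.
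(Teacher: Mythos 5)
Your proposal is correct and follows essentially the same route as the paper: the paper's own proof simply points back to Theorem \ref{t:m}, noting that \eref{eq:need} now holds directly because the whole field is observed, and that the $b\epsilon$ term in the $\Ql$ estimate (anticipated in the footnote there) must be retained since $(I-\Be)\Ql\equiv 0$ is no longer assumed. Your additional care about the quantifier order --- fixing $\lambda>\lambda^*$ (with $R=2r$) inside the proof before extracting $t^*$, $\gamma$, and finally $\eta$ --- matches the paper's remark in subsection \ref{ssec:m2} that $\lambda$, absent from the statement, is introduced in the proof.
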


\begin{proof} The proof is nearly identical to that of
Theorem \ref{t:m}. Differences arise only because we have
not assumed that $(I-\Be)\Ql \equiv 0.$ This fact
arises in two places in Theorem \ref{t:m}.
The first is where we obtain \eref{eq:need}.
However in this case we directly obtain \eref{eq:need} 
since the whole velocity field is observed. The second
place it arises is already dealt with in the footnote
appearing in the proof of Theorem \ref{t:m}
when estimating the contraction
properties in $\Wc$; there we indicate that the proof is
already adjusted to allow for the situation required here. 
\end{proof}

\begin{remark}
\label{r:2}

If $\sup_{j \ge 1}\|B_j(\eta)\|<c\eta^2$ 
then the proof may be simplified considerably
as it is not necessary to split the space into two parts,
$\Wl$ and $\Wc$. Instead the contraction of $B_j$ can
be used to control any expansion in $\Psi(\cdot)$, provided
$\eta$ is sufficiently small.

We observe that the key conclusion of the theorem is
the stabilization of the algorithm  when started at
distances of ${\mathcal O}(1)$; the asymptotic bound,
although of ${\mathcal O}(\epsilon)$, has constant
$\frac{2b}{1-a}$ which may exceed $1$
and so the bound may appear worse than that obtained
by simply using the
observations to estimate the signal. In practice,
however, we will show that the algorithm gives estimates
of the state which improve upon the observations.

\end{remark}

\subsection{Example of Main Result: 3DVAR}
\label{ssec:s2}

We demonstrate that the 3DVAR algorithm from 
subsection \ref{ssec:3DVar} satisfies Assumptions
\ref{a:2} and \ref{a:2z} in the partially and
completely observed cases respectively, and hence
that the resulting filters will locate the true signal,
provided $\eta$ is sufficiently small.  In the next subsection we also
consider the limit of frequent observations and $\eta$
large, in which case (S)PDEs can be derived to study filter
stability.

Satisfaction of Assumptions \ref{a:2} and \ref{a:2z}
follows from the properties of 
$$\Bl=(I+\eta^2A_0^{2\alpha})^{-1}\eta^2 A_0^{2\alpha}, \quad
I-\Bl=(I+\eta^2A_0^{2\alpha})^{-1}.$$
Note that the eigenvalues of $\Bl$ are
$$\frac{\eta^2\bigl(4\ell\pi^2|k|^2\bigr)^{2\alpha}}{1+\eta^2\bigl(4\ell\pi^2|k|^2\bigr)^{2\alpha}},$$
if $A_0=\ell A.$
Clearly the spectral radius of $\Bl$ is less than or equal
to one on $\Wl$ or $H$, independently of the sign of
$\alpha.$  The difference is just that $|k|^2<\lambda/\lambda_1$
in the former, and $|k|$ is unbounded in the latter.

First we consider the partially observed situation.
We note that $B_j \equiv B$ and is given by \eref{eq:B}: 
\begin{eqnarray*}
B=\left(
\begin{array}{cc}
(I+\eta^2A_0^{2\alpha})^{-1}\eta^2 A_0^{2\alpha} & 0\\
0 & I 
\end{array}
\right)
\end{eqnarray*}
so that the Kalman gain-like matrix $I-B$ is given by
\begin{eqnarray*}
I-B=\left(
\begin{array}{cc}
(I+\eta^2A_0^{2\alpha})^{-1} & 0\\
0 & 0 
\end{array}
\right).
\end{eqnarray*}
From this it is clear that
$(I-B)\Ql \equiv 0.$ Furthermore, since the spectral
radius of $\Bl$ does not exceed one, the same is true of $B$.
Hence for the operator norms from $H^1$ into itself
we have $\|B\| \le 1.$ Similarly,
if $\alpha<0$ then $b:=\|I-B\|=1$, whilst if $\alpha \ge 0$
then $b=\Bigl(1+\eta^2(\ell \lambda_1)^{2\alpha}\Bigr)^{-1}<1.$ 
Thus Theorem \ref{t:m} applies.

In the fully observed case we simply have $B_j \equiv B$
where $B=B_0(\eta)$ defined above on $H$. 
Again $\|B\| \le 1$ and
if $\alpha<0$ then $\|I-B\|=b=1$, whilst if $\alpha \ge 0$
then $b=\Bigl(1+\eta^2(\ell\lambda_1)^{2\alpha}\Bigr)^{-1}<1.$
Thus Theorem \ref{t:mz} applies. Note (see
Remark \ref{r:2}), that if $\alpha<0$ then
the proof of that theorem could be
simplified considerably because $\|B\|<1$ and in fact
$\sup_{j \ge 1}\|B\|<c\eta^2.$ 

\begin{remark}
\label{r:3} 

Recall that Theorem \ref{t:mz} gives the
asymptotic bound $C\epsilon$ on the state
estimate where $C=\frac{2b}{1-a}.$
When $\alpha < 0$ and $b=1$ we have $C>1;$ 
in this case the asymptotic bound exceeds that obtained
by simply employing the observations.
In the case $\alpha>0$, we have $b<1$ and 
it is possible that $C<1$, meaning that the bound
may be of direct value to the practitioner. However,
regardless of the value of $C$, we emphasize that
the value of Theorem \ref{t:mz} is the stabilization
from ${\mathcal O}(1)$ initial error, and not the value of
the constant $C$ in the asymptotic bound. Our numerics
will show that, in practice, the error in the state estimator
often falls well below the average error committed
by simply using the data as an estimator.  
\end{remark}

\subsection{(S)PDE limit}
\label{ssec:spde}

Theorems \ref{t:m} and \ref{t:mz} 
are valid for sufficiently
{\em small} $\eta$ and hence exploit variance inflation. 
In this limit the observations are given
large weight at low wavevectors and the contraction resulting
from this acts to stabilize the system.
There is an interesting family of (S)PDEs for the mean
which can be derived in the case of {\em large} $\eta$, 
by considering the limit of frequent observations. 
We describe these (S)PDEs in the case of the 3DVAR
method  from subsection \ref{ssec:3DVar}. For
simplicity we consider the fully observed case; partial
observations can be handled similarly. 

To this end we assume that $h \ll 1$, $r \in (0,1]$, 
and that $\sigma^2=\sigma_0^2/h^r$ and $\delta^2=\omega\sigma_0^2 h^{1-r}.$
Thus $\eta^{-2}=\omega h.$
We then obtain
\begin{eqnarray*}
\begin{array}{ll}
\hu_{j+1}&=(I+\eta^2 A_0^{2\alpha})^{-1}\eta^2 A_0^{2\alpha} \Psi(\hu_j)
+(I+\eta^2 A_0^{2\alpha})^{-1}y_{j+1}\\
&=(I+\eta^{-2} A_0^{-2\alpha})^{-1}\Psi(\hu_j)
+(I+\eta^{-2} A_0^{-2\alpha})^{-1}\eta^{-2} A_0^{-2\alpha} y_{j+1}\\
&=(I+\omega h A_0^{-2\alpha})^{-1}\Psi(\hu_j)+(I+\omega h A_0^{-2\alpha})^{-1}
\omega h A_0^{-2\alpha}y_{j+1}.
\end{array}
\end{eqnarray*}
If we define the sequence $\{z_j\}_{j \in \Z^+}$ by 
\begin{equation}
\label{eq:z}
z_{j+1}=z_j+hy_{j+1},\quad z_0=0
\end{equation}
then the preceding expression for $\hu_{j+1}$ can be
rearranged and expanded formally in powers of $h$
to give
$$\Bigl(\frac{\hu_{j+1}-\hu_{j}}{h}\Bigr)=
\Bigl(\frac{\Psi(\hu_j)-\hu_{j}}{h}\Bigr)-\omega  A_0^{-2\alpha}\Psi(\hu_j)+\omega  A_0^{-2\alpha}\Bigl(\frac{z_{j+1}-z_{j}}{h}\Bigr)+{\cal O}(h).$$
Note also that formal expansion in $h$ gives
$$\Psi(u;h)=u-h\bigl(\nu Au +\cB(u,u)-f\bigr)+{\mathcal O}(h^2).$$
Thus substituting in the previous expression and taking
the limit $h \to 0$ we obtain the continuous time filter
\begin{equation}
\frac{\rd \hu}{\rd t} + \nu A\hu + \cB(\hu, \hu) +
\omega A_0^{-2\alpha}\Bigl(\hu-\frac{\rd z}{\rd t}\Bigr)= f,
\quad \hu(0)=\hu_0.
\label{eq:ctf}
\end{equation}
Here the observation $y=\frac{\rd z}{\rd t}$ enters as a forcing
to the underlying Navier-Stokes equation. The term
proportional to $\hu-\frac{\rd z}{\rd t}$ acts to drive the
solution towards the observation, and may compete with
destabilization
induced by the Navier-Stokes
forcing $\nu A\hu + \cB(\hu, \hu)-f$. 
As such it is a clear continuous time analogue of
the filter update equation (\ref{eq:meanu}).

As in the discrete case (\ref{eq:meanu}), we
can study the stability of the continuous time filter
by expressing the noise in terms of an underlying
signal $u$ which the filter aims to uncover, and
study the difference $\hu-u.$ We assume that $u$ itself
solves the Navier-Stokes equation (\ref{eq:nse}).
We now express the observation signal $z$ in terms of
the truth $u$ in order to facilitate study of filter
stability.
If, instead of making the assumption of bounded noise
as in Assumption \ref{a:1z}, we assume a Gaussian noise
consistent with derivation of the filter, then we obtain 
$$\Bigl(\frac{z_{j+1}-z_j}{h}\Bigr)=y_{j+1}=u_{j+1}+ h^{\frac{1-r}{2}} \frac{\sigma_0}{\sqrt h} A_0^{-\beta}\Delta w_{j+1}$$
where $\{\Delta w_j\}_{j \ge 1}$ is an i.i.d. sequence
and $\Delta w_{1} \sim N(0,I)$ has the distribution of
a white noise in $H$.
If $r=1$ then 
we have an Euler-Maruyama discretization of the 
SDE (stochastic differential equation)
\begin{equation}
\label{eq:sdec}
\frac{\rd z}{\rd t}=u+ 
\sigma_0
A_0^{-\beta}\frac{\rd W}{\rd t},\quad z(0)=0, 
\end{equation}
where $W$ is a Brownian motion in time with covariance the identity
in $H$.  If $r \in (0,1)$ the limit is simply the ODE
\begin{equation}
\label{eq:odec}
\frac{\rd z}{\rd t}=u, \quad z(0)=0. 
\end{equation}

If $r=1$ then substituting 
the expression for $z$ from (\ref{eq:sdec})
into (\ref{eq:ctf}) we obtain the following SDE: 
\begin{equation}
\frac{\rd \hu}{\rd t} + \nu A\hu + \cB(\hu, \hu) +
\omega A_0^{-2\alpha}(\hu-u)= f+\omega \sigma_0
A_0^{-2\alpha-\beta}\frac{\rd W}{\rd t}, 
\quad \hu(0)=\hu_0.
\label{eq:nse2}
\end{equation}
On the other hand, if $r \in (0,1),$ then substituting 
the expression for $z$ from (\ref{eq:odec})
into (\ref{eq:ctf}) we obtain the following ODE: 
\begin{equation}
\frac{\rd \hu}{\rd t} + \nu A\hu + \cB(\hu, \hu) +
\omega A_0^{-2\alpha}(\hu-u)= f, 
\quad \hu(0)=\hu_0,
\label{eq:nse3}
\end{equation}
Equations (\ref{eq:nse2}) and (\ref{eq:nse3}) can be used
to study filter stability in this frequent observations
limit.  The equations are equivalent to an 
SPDE and PDE of Navier-Stokes
type, driven by noise in the former case, 
and with an additional damping term
driving the solution towards the signal $u$ in both
cases. We expect that for large enough $\omega$, which
is a form of variance inflation in this high frequency
data context, the solution will indeed stabilize
towards the signal $u$, although the nature of white noise
forcing means that excursions from the signal in the former 
case will occur infinitely often.  
These excursions will then be 
stabilized by the signal.  
It would be interesting to analyze the properties
of the SPDE and PDE by using the theory of
nonautonomous and random dynamical systems,
and the ergodicity theory developed 
in \cite{hairer2006ergodicity}.

\section{Numerical Results}
\label{sec:numerics}


In this section we describe a number of numerical
results designed to illustrate the range of filter
stability phenomena studied in the previous sections. 
We start, in subsection \ref{ssec:prob}, by
describing two useful bounds on the error committed
by filters; we will use these guides in the
subsequent numerics. Subsection \ref{ssec:results} describes
the common setup for all the subsequent
numerical results shown. Subsection \ref{ssec:complete}
describes these results in 
the case of complete observations in discrete time, whilst
Subsection \ref{ssec:partial} extends to the case of partial
observations, also in discrete time.
Subsection \ref{ssec:continuous} studies filter
stability in the case of continuous time observations, 
using the (S)PDEs derived at the
end of section \ref{sec:stability}.

Our theoretical results have been derived under
Assumptions \ref{a:1} and \ref{a:1z} on the errors.
These are incompatible
with the assumption, underpinning derivation of the
approximate filters, that the observational noise sequence
is Gaussian. This is because i.i.d Gaussian sequences
will not have finite supremum. In order to test the
robustness  of our theory we will conduct numerical
experiments with Gaussian noise sequences. 

\subsection{Useful Error Bounds}
\label{ssec:prob}

We describe two useful bounds on the error
which help to guide and evaluate the numerical
simulations. To derive these bounds we assume that
the observational noise sequence $\xi_j$ is i.i.d
with $\E \xi_j=0$ and $\E \xi_j \otimes \xi_j = \Gamma$.  
Then 
$$\E |\xi_j|^2={\rm tr} (\Gamma) = \sum_k g_k$$ 
where $\{g_k\}$ are the eigenvalues of the operator 
$\Gamma.$ 
(This operator must be trace class if the Gaussian
measure $N(0,\Gamma)$, used to derive the approximate filters,
is to defined on $H$).

\begin{itemize}

\item The lower bound is derived from 
(\ref{eq:error}). Using the assumed independence of
the sequence we see that 
\begin{equation}
\E |\hu_{j+1}-u_{j+1} |^2 \ge \E |(I-B_j)\xi_{j+1} |^2=
{\rm tr}\Bigl((I-B_j)\Gamma(I-B_j)^*\Bigr)
\label{eq:lowerbd}
\end{equation}

\item The upper bound on the filter error is found by
noting that a trivial filter is obtained by 
simply using the observation sequence as the
filter mean; this corresponds to setting
$B_j \equiv 0$ in (\ref{eq:meanu}). For this filter
we obtain 
\begin{equation}
\E |\hu_{j+1}-u_{j+1} |^2 \le \E |\xi_{j+1} |^2=
{\rm tr}\bigl(\Gamma\bigr)
\label{eq:upperbd}
\end{equation}
in the case of complete observations, and
\begin{equation}
\E |\hu_{j+1}-u_{j+1} |^2 \le \E |\xi_{j+1} |^2+|\Ql u_{j+1}|^2=
{\rm tr}\bigl(\Gamma\bigr)+|\Ql u_{j+1}|^2
\label{eq:upperbd2}
\end{equation}
in the case of incomplete observations. 

\end{itemize}

Although the lower bound (\ref{eq:lowerbd})
does not hold {\em pathwise}, only on average,
it provides a useful guide for our pathwise experiments. 
The upper bounds (\ref{eq:upperbd}) and (\ref{eq:upperbd2})
do not apply to any numerical
experiment conducted with non-zero $B_j$, but also serve
as a useful guide to those experiments: it is clearly
undesirable to greatly exceed the error committed by simply
trusting the data. We will hence plot the lower
and upper bounds as useful comparitors for
the actual error incurred in our numerical experiments below.
We note that, for the 3DVAR example from subsection
\ref{ssec:3DVar} with complete observations, 
the upper and lower bounds coincide 
in the limit $\eta \to 0$ as then $B \to 0$.
For partial observations they differ by the second term
in the upper bound.

\subsection{Experimental Setup}
\label{ssec:results}

For all the results shown we 
choose a box side of length $L=2$. 
The forcing in Eq. (\ref{eq:nse}) is taken to be $f=\nabla^{\perp}\psi$,
where $\psi=\cos(\pi k \cdot x)$ and $\nabla^{\perp}=J\nabla$ with $J$
the canonical skew-symmetric matrix, and $k=(5,5)$.  
The method used to approximate the forward model (\ref{eq:nse})
is a modification of a fourth-order Runge-Kutta method,  
ETD4RK \cite{cox2002exponential}, 
in which the Stokes semi-group is computed exactly 
by working in the incompressible Fourier
basis  $\{\psi_{k}(x)\}_{k \in {\mathbb Z}^2\backslash\{0\}}$,
and Duhamel's principle (variation of constants formula) is 
used to incorporate the nonlinear term.
Spatially, a Galerkin spectral method \cite{hesthaven2007spectral} 
is used, in the same basis, 
and the convolutions arising from products in the nonlinear 
term are computed via FFTs.
We use a double-sized domain in each dimension, buffered with
zeros, resulting in $64^2$ grid-point FFTs, and only half the 
modes in each direction are retained when transforming back 
into spectral space in order to prevent aliasing, which is avoided
as long as fewer than 2/3 of the modes are retained.

The dimension of the attractor is determined by the 
viscosity parameter $\nu$. For the particular forcing used
there is an explicit steady state for all $\nu>0$
and for $\nu \geq 0.035$ this solution is stable (see
\cite{majda2006non}, Chapter 2 for details). 
As $\nu$ decrease the flow becomes increasing complex
and the regime $\nu \leq 0.016$ corresponds to 
strongly chaotic dynamics with attributes of turbulent 
scalings in the spectrum.  
We focus subsequent studies of the filter on a  
mildly turbulent ($\nu = 0.01$) parametric regime.
For this small viscosity parameter, we use a time-step 
of $\delta t = 0.005$.

The data is generated by computing a true signal
solving \eref{eq:nse} at the desired value of $\nu$, and
then adding Gaussian random noise to it at each observation
time. Such noise does not satisfy Assumption \ref{a:1z}, since
the supremum of the norm of the noise sequence is not finite, and so 
this setting provides a severe test beyond what is 
predicted by the theory; nonetheless, it should be noted that 
Gaussian random variables only obtain arbitrarily large values 
arbitrarily rarely. 
 
All experiments are conducted using the 3DVAR setup and
it is useful to reread the end of subsection \ref{ssec:3DVar}
in order to interpret the parameters $\alpha$ and $\eta.$ 
We consider both the choices $\alpha=\pm 1$ for 3DVAR, noting
that in the case $\alpha=-1$ the operator $B$ has norm strictly
less than one and so we expect the algorithm to be more robust
in this case (see Remark \ref{r:2} for discussion of this fact). 
For 
all experiments we set $\ell=\lambda_1^{-1}$
which ensures that the action of $A_0^{2\alpha}$, and hence
$B$, on the first eigenfunction is independent of the value
of $\alpha$; this is a useful normalization when comparing
computations with $\alpha=1$ and $\alpha=-1.$

In the discrete time experiments
we set the observational noise to white
noise $\Gamma= \eps^2 I$ (i.e. $\beta=0$ in section \ref{ssec:3DVar}).  
Here $\eps = 0.04$, which gives
a standard deviation of approximately 10\% of the maximum 
standard deviation of the turbulent dynamics.  
Since we are computing in a truncated finite-dimensional basis 
the eigenvalues are summable; the situation 
can be considered as an approximation of an operator
whose eigenvalues decay rapidly outside the basis in which
we compute.

\subsection{Complete Observations; Discrete Time}
\label{ssec:complete}

We start by considering discrete and complete observations and
illustrate Theorem \ref{t:mz}, and in particular the role of 
the parameter $\eta.$
The experiments presented employ a
large observation increment of $h = 0.5 = 100 \delta t$.
For $\alpha=1$ we find that 
when $\eta=\sigma$ (Fig. \ref{a1.1}) the estimator 
stabilizes from an initial ${\mathcal O}(1)$ error
and then remains stable. The upper and lower bounds are
satisfied (the upper bound after an initial rapid transient), 
and even the high modes, which are slaved to the low modes, 
synchronize to the true signal. 
For $\eta=10\sigma$ (Fig. \ref{a1.10})
the estimator fails to satisfy the upper bound, 
but remains stable over a long time horizon; there
is now significant error in the $k=(7,7)$ mode,
in contrast to the situation with smaller $\eta$ shown 
in Fig. \ref{a1.1}.  
Finally, when $\eta=100\sigma$ (Fig. \ref{a1.100}), the estimator really
diverges from the signal, although still remains
bounded. 
 
When $\alpha=-1$ the lower and upper bounds are 
almost indistinguishable and, for all values of
$\eta$ examined, the error either exceeds or fluctuates
around the upper bound; see Figures \ref{am1.1}, \ref{am1.10}
and \ref{am1.100} where $\eta=\sigma, 10\sigma$ and $100\sigma$
respectively. It is not until $\eta=100\sigma$ (Fig. 
\ref{am1.100}) that the estimator really loses the signal.  
Notice also that the high modes of the estimator 
always follow the noisy observations and this could be 
undesirable.  For both $\eta=100\sigma$ and $10\sigma$, the $\alpha=-1$ estimator 
performs better than the one for $\alpha=1$ in terms of overall error, 
illustrating the robustness alluded to in Remark \ref{r:2}
since for $\alpha<0$ we have $\|B\|<1.$  
However, an appropriately 
tuned $\alpha=1$ filter has the potential to perform 
remarkably well, both in terms of
overall error and individual error of all modes
(see Fig. \ref{a1.1}, in contrast to 
Fig. \ref{am1.1}).  In particular,
this filter has an expected error substantially smaller than the 
upper bound, which does not happen 
for the case of $\alpha=-1$ when complete observations
are assimilated.

\begin{figure*}
\includegraphics[width=.45\textwidth]{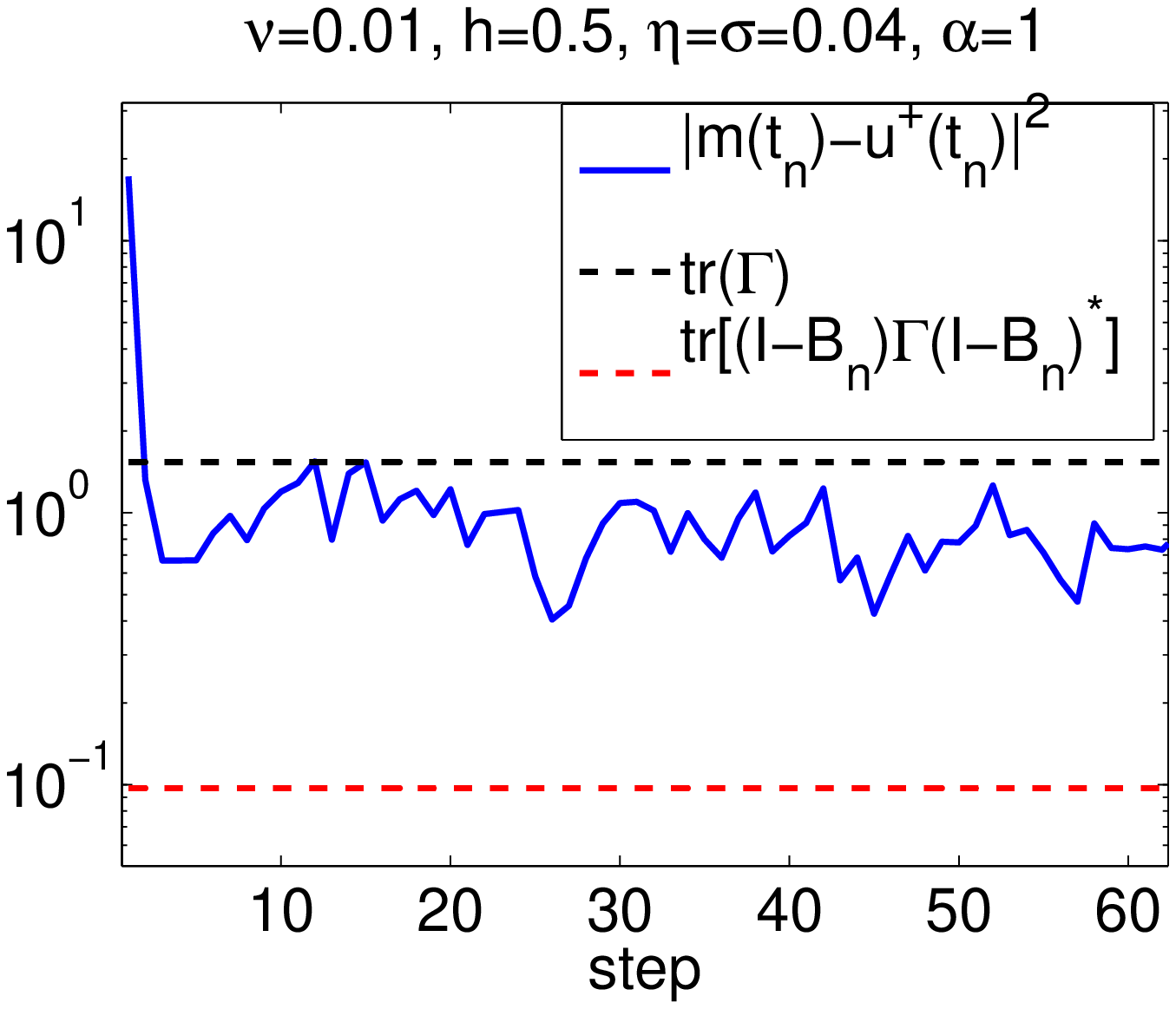}
\includegraphics[width=.45\textwidth]{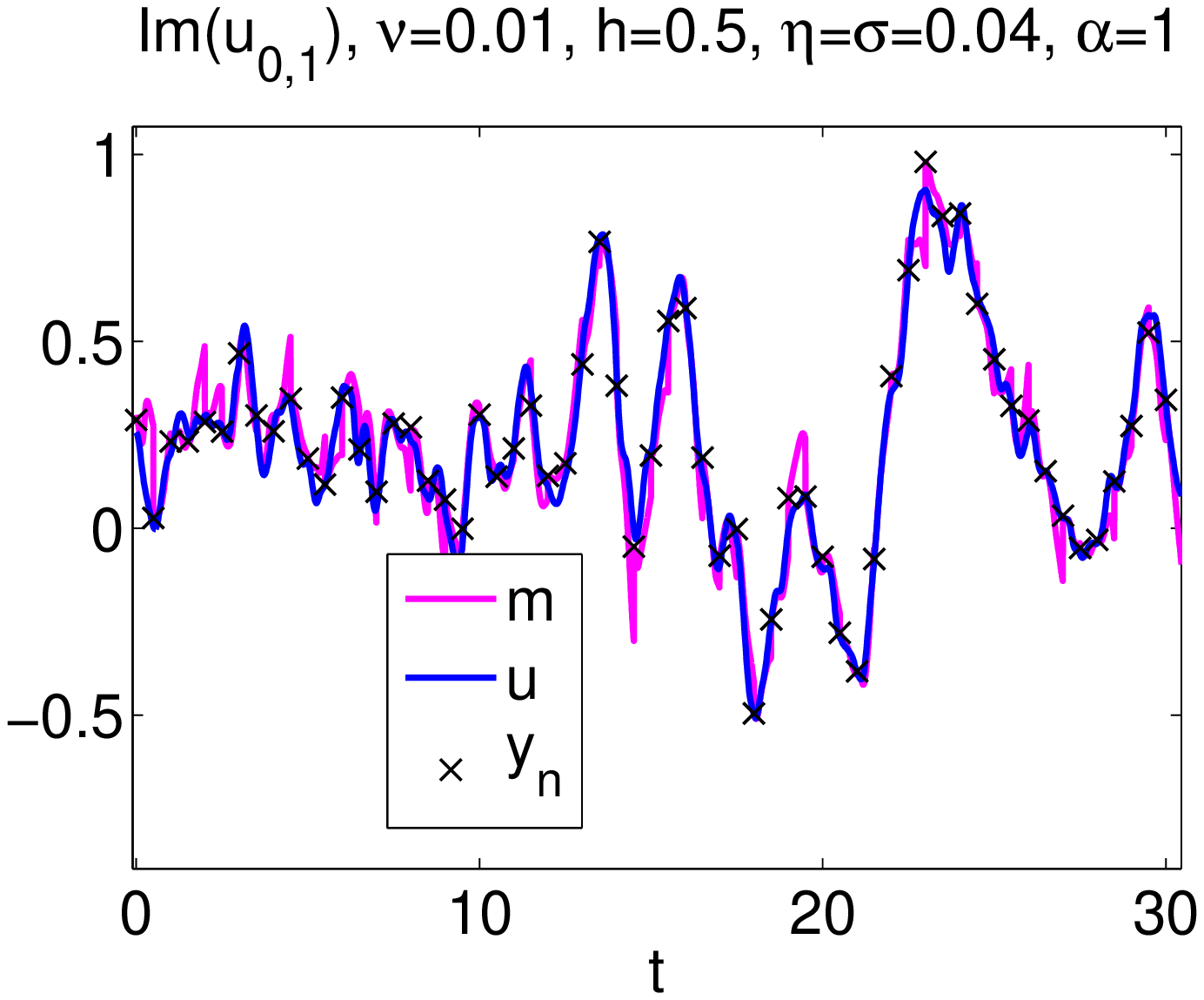}
\includegraphics[width=.45\textwidth]{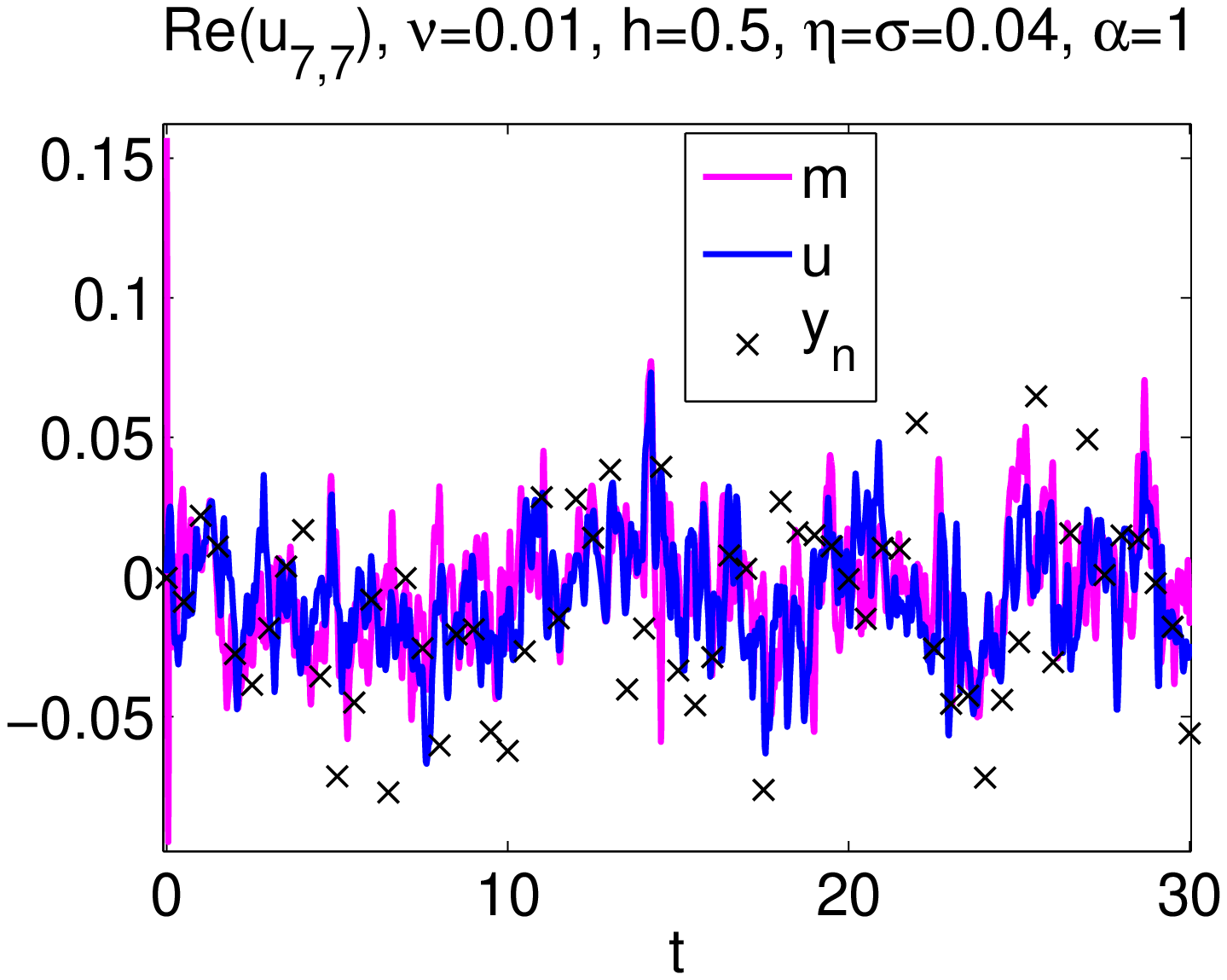}
\includegraphics[width=.45\textwidth]{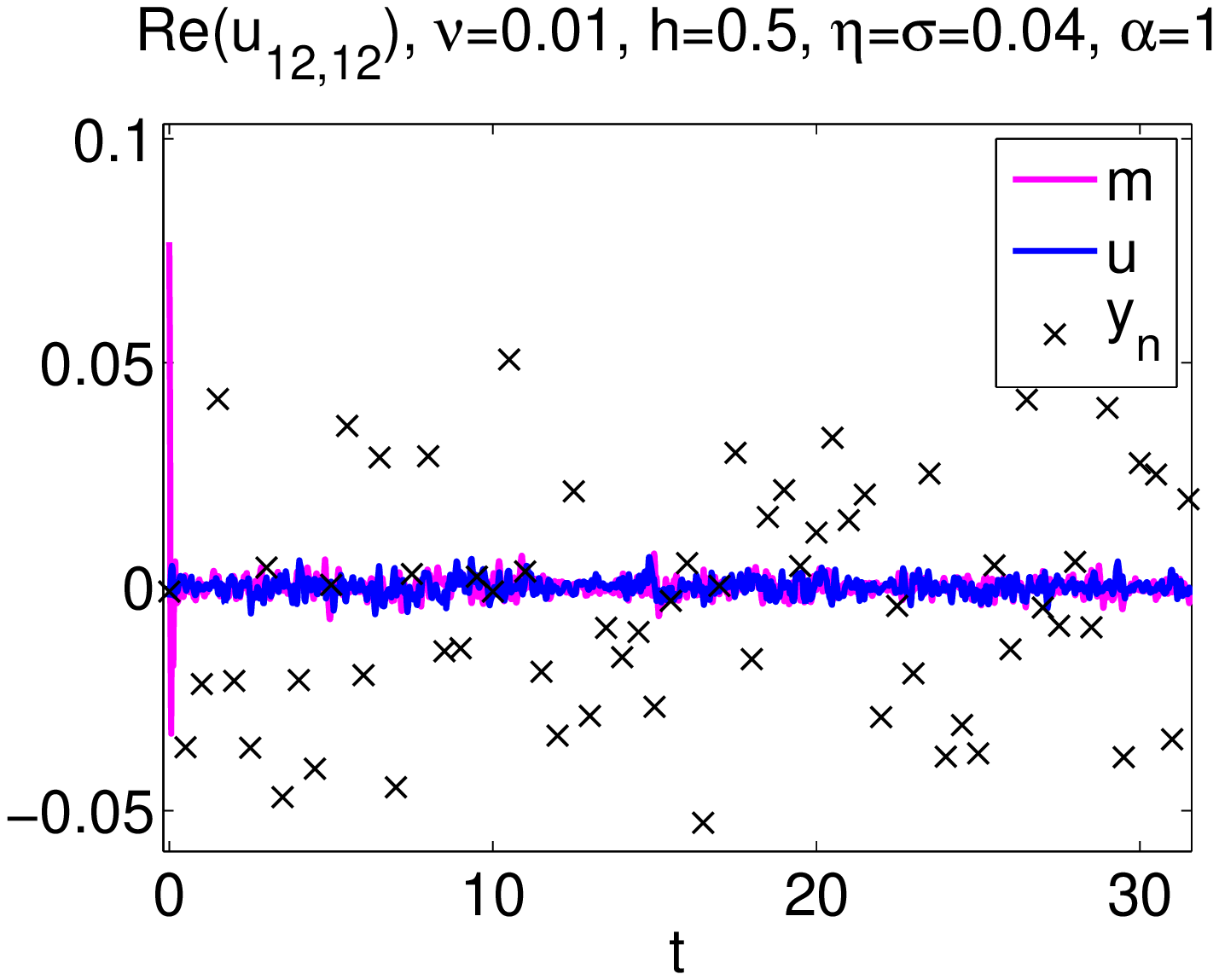}
\caption{Example of a stable trajectory for 3DVAR with $\nu=0.01,
  h=0.5, \eta=\sigma=0.04, \alpha=1$.  
The top left plot shows the norm-squared error between the
estimated mean, $m(t_n)=\hat{m}_n$, and the signal, $u(t_n)$, 
in comparison to the preferred upper
bound (i.e. the total observation error ${\rm tr} (\Gamma) = \Xi$) 
and the lower bound 
${\rm tr} [(I-B_n) \Gamma (I-B_n)]$.  The other three plots show the 
estimator, $m(t)$, together with the signal, $u(t)$, and the
observations, $y_n$ for a few individual modes.}
\label{a1.1}
\end{figure*}

\begin{figure*}
\includegraphics[width=.45\textwidth]{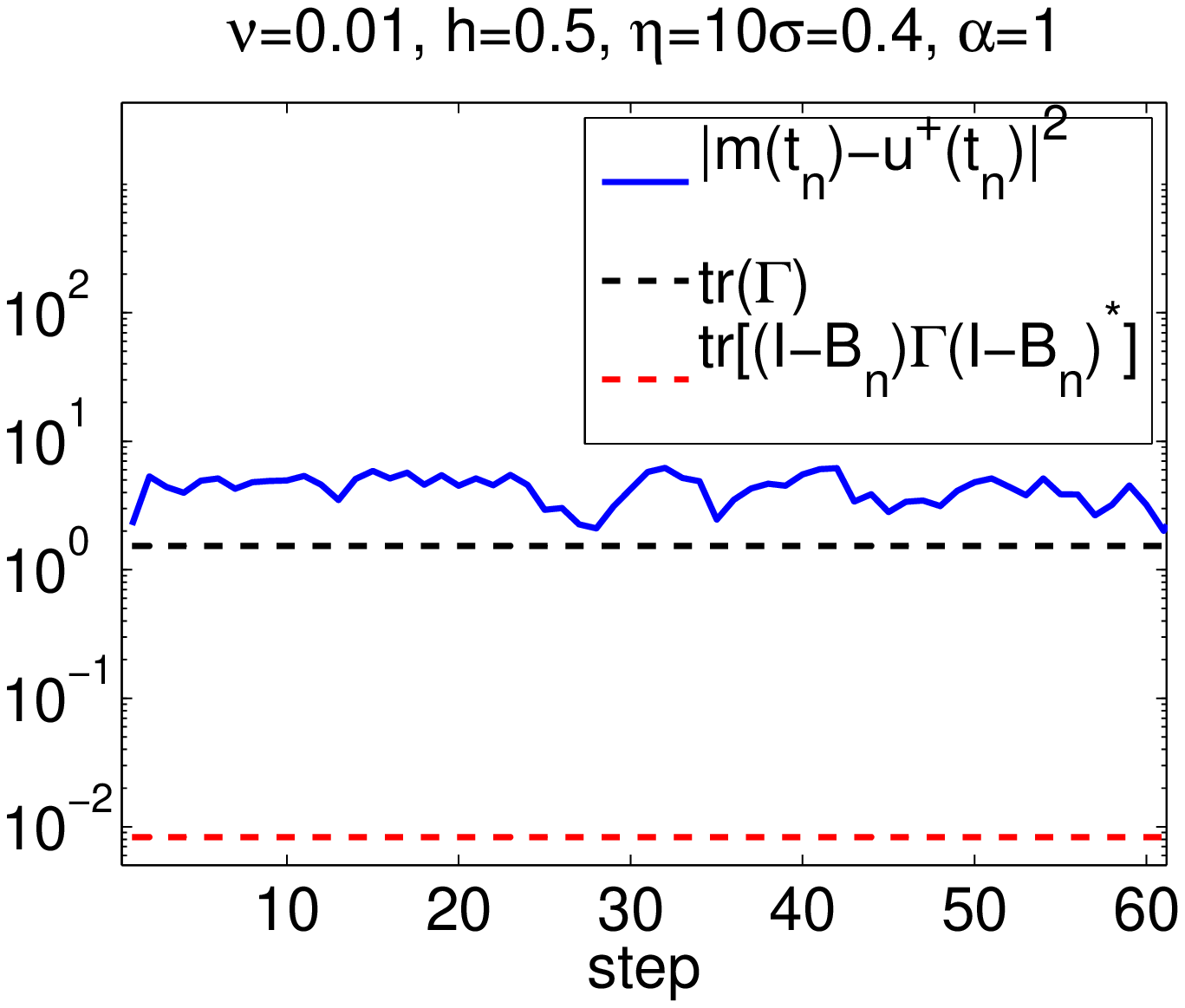}
\includegraphics[width=.45\textwidth]{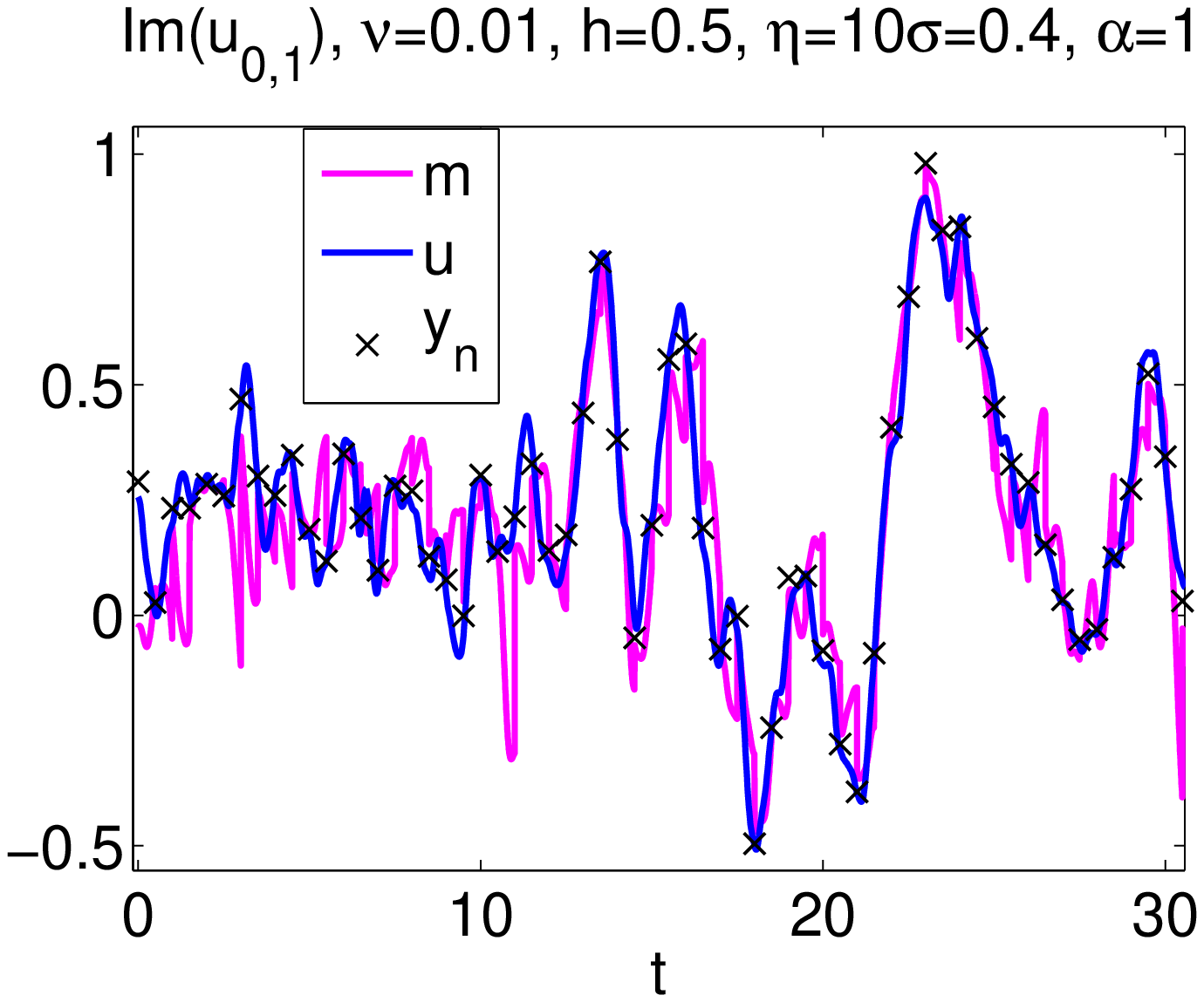}
\includegraphics[width=.45\textwidth]{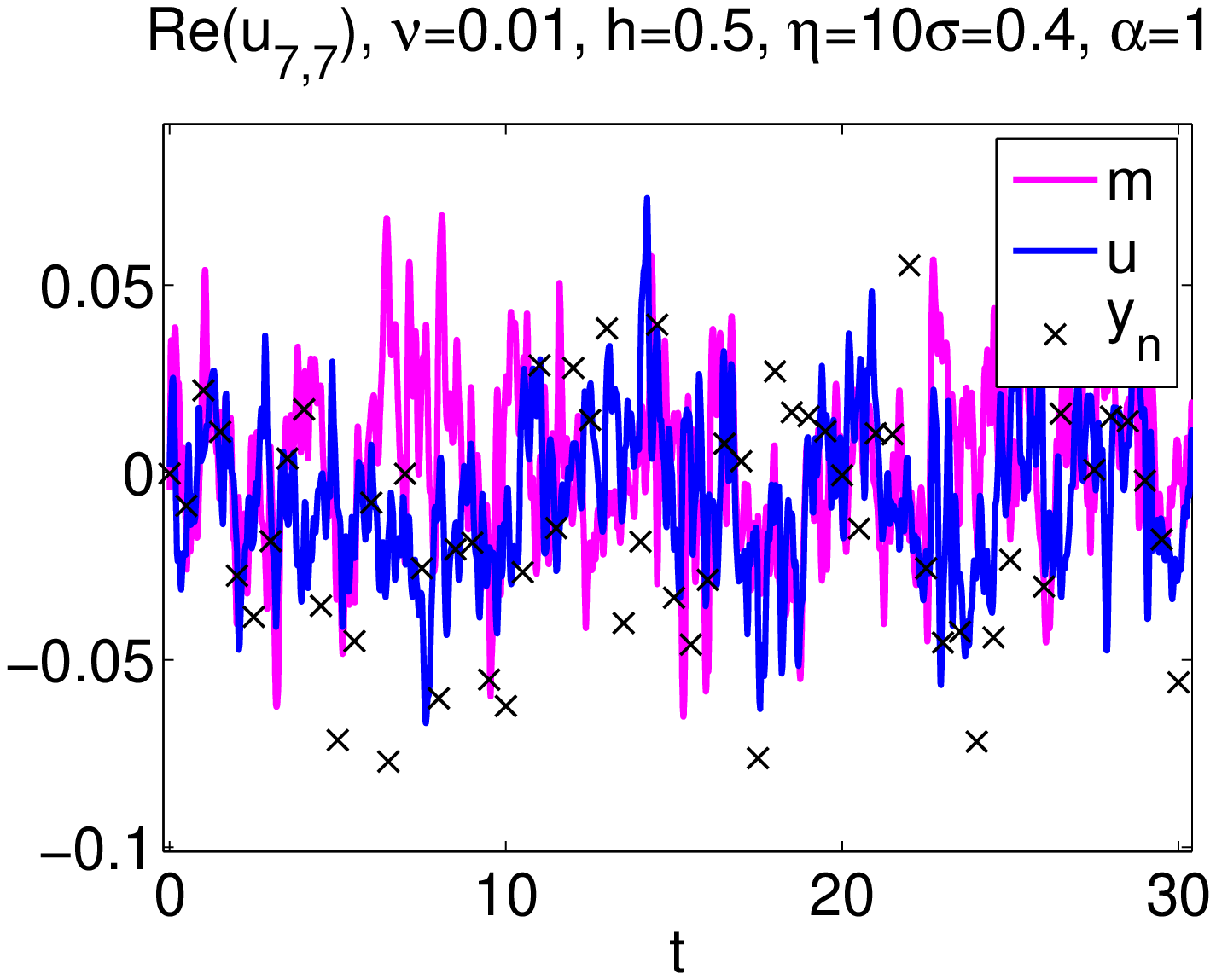}
\includegraphics[width=.45\textwidth]{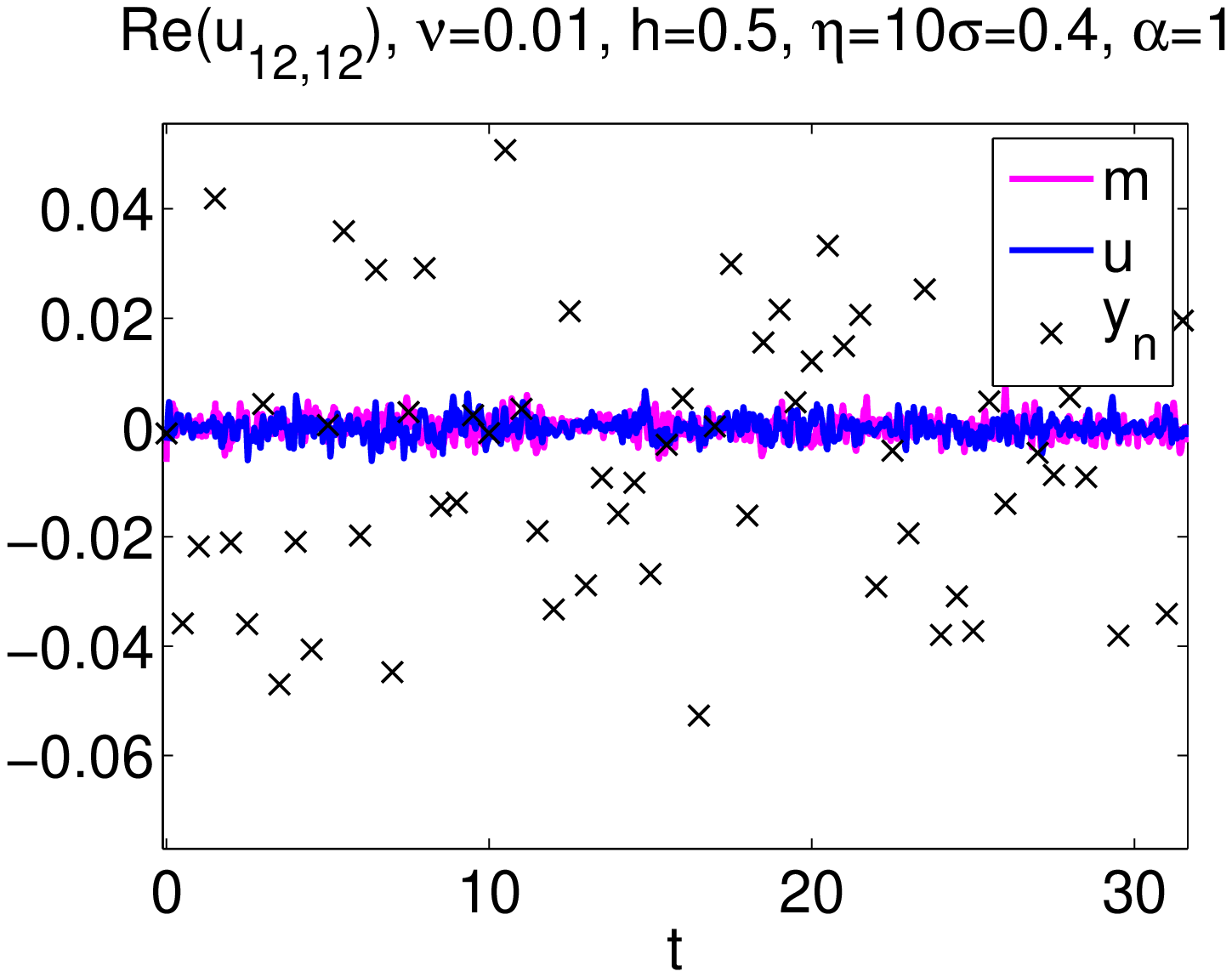}
\caption{Example of a destabilized trajectory for 3DVAR with 
the same parameters as in Fig. \ref{a1.1} except the larger
value of $\eta=10\sigma=0.4$.  Panels are the same.}
\label{a1.10}
\end{figure*}

\begin{figure*}
\includegraphics[width=.45\textwidth]{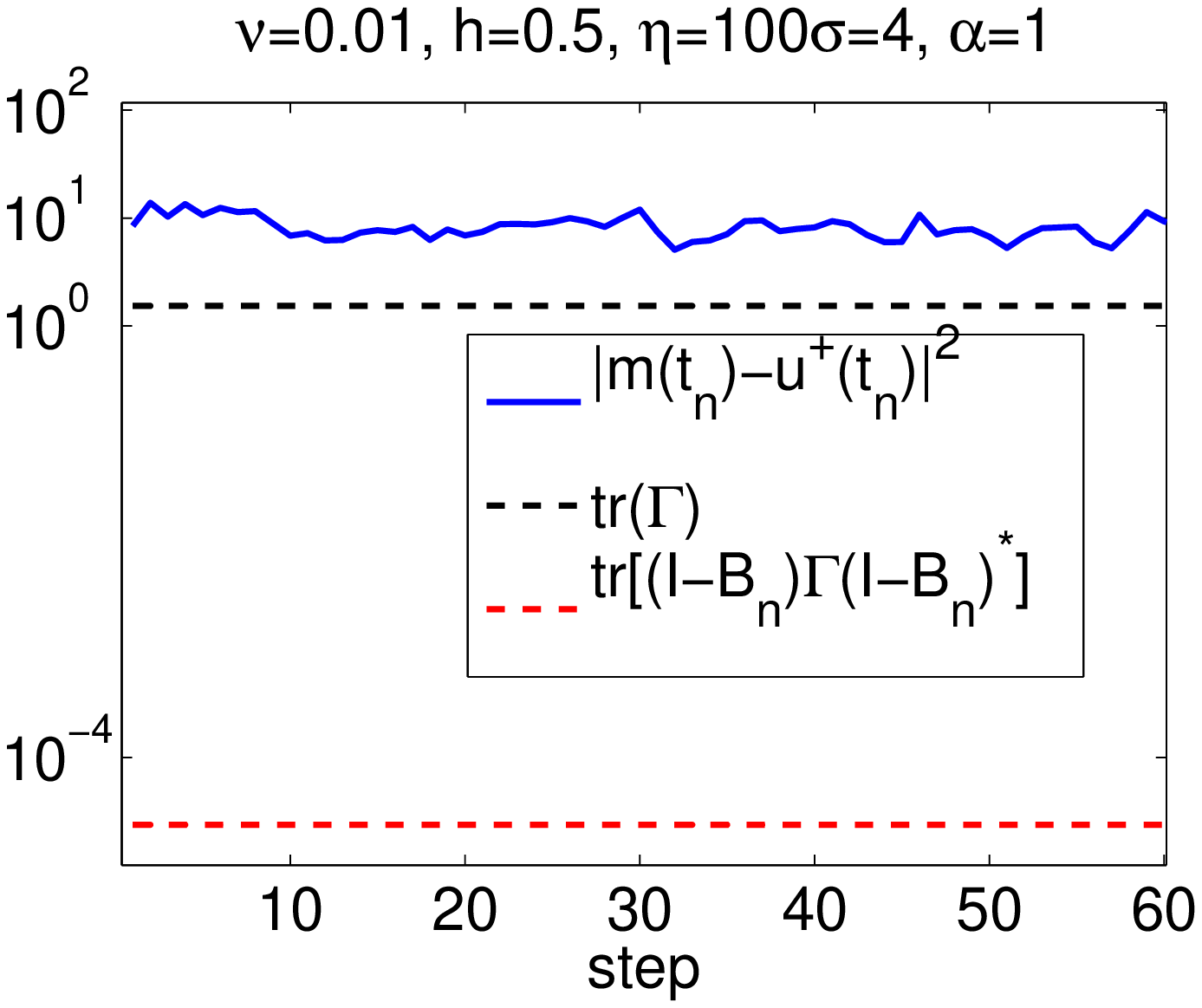}
\includegraphics[width=.45\textwidth]{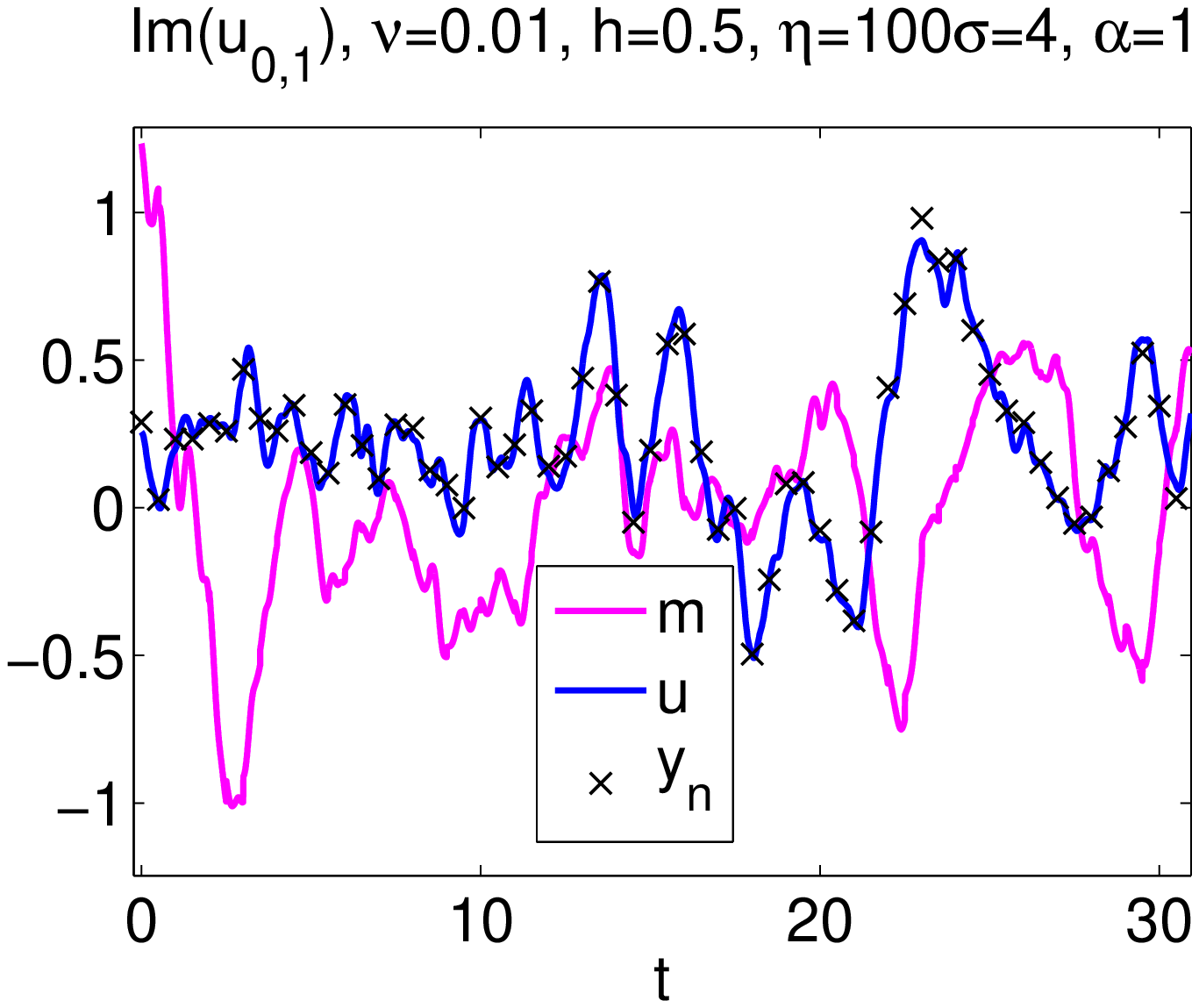}
\includegraphics[width=.45\textwidth]{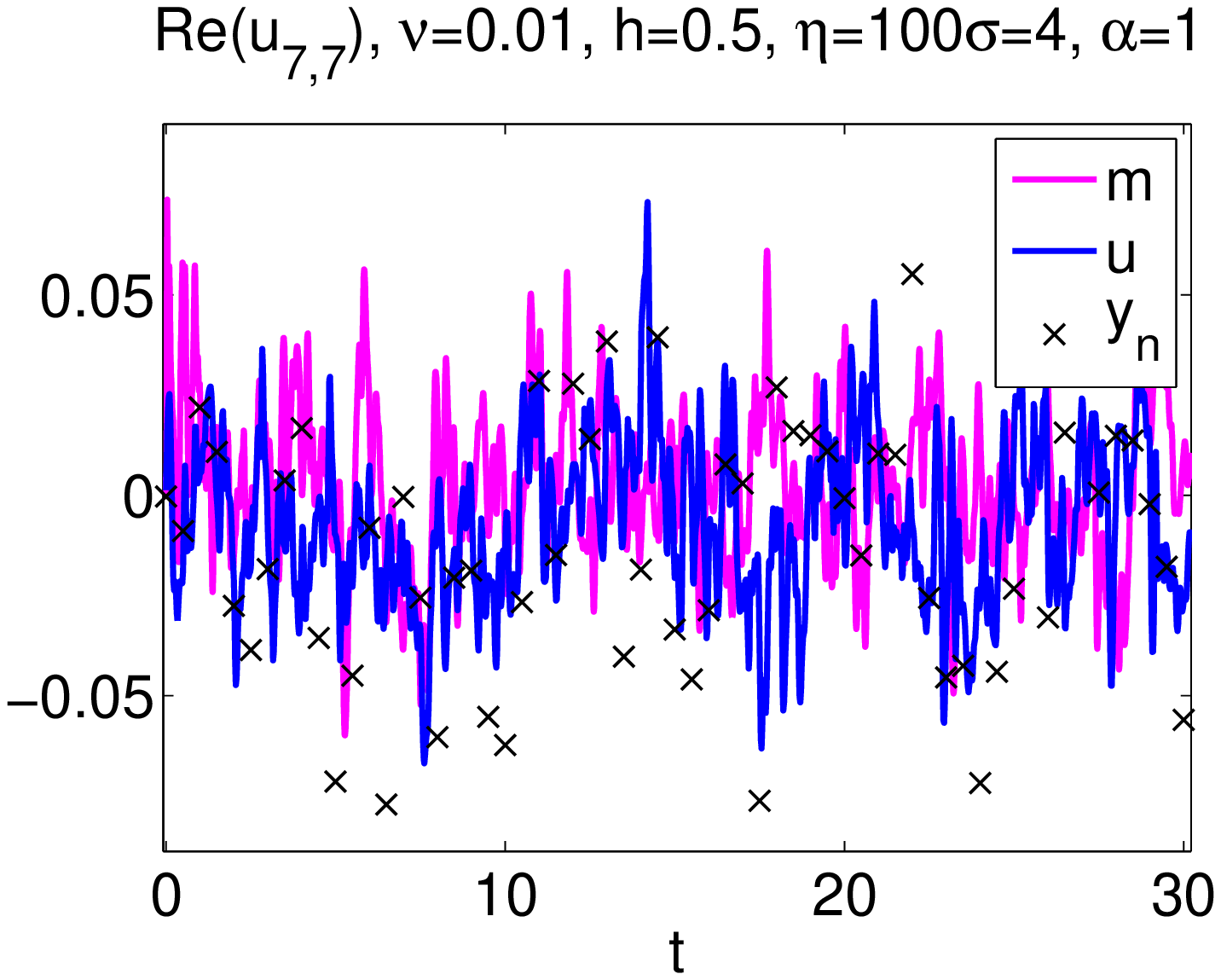}
\includegraphics[width=.45\textwidth]{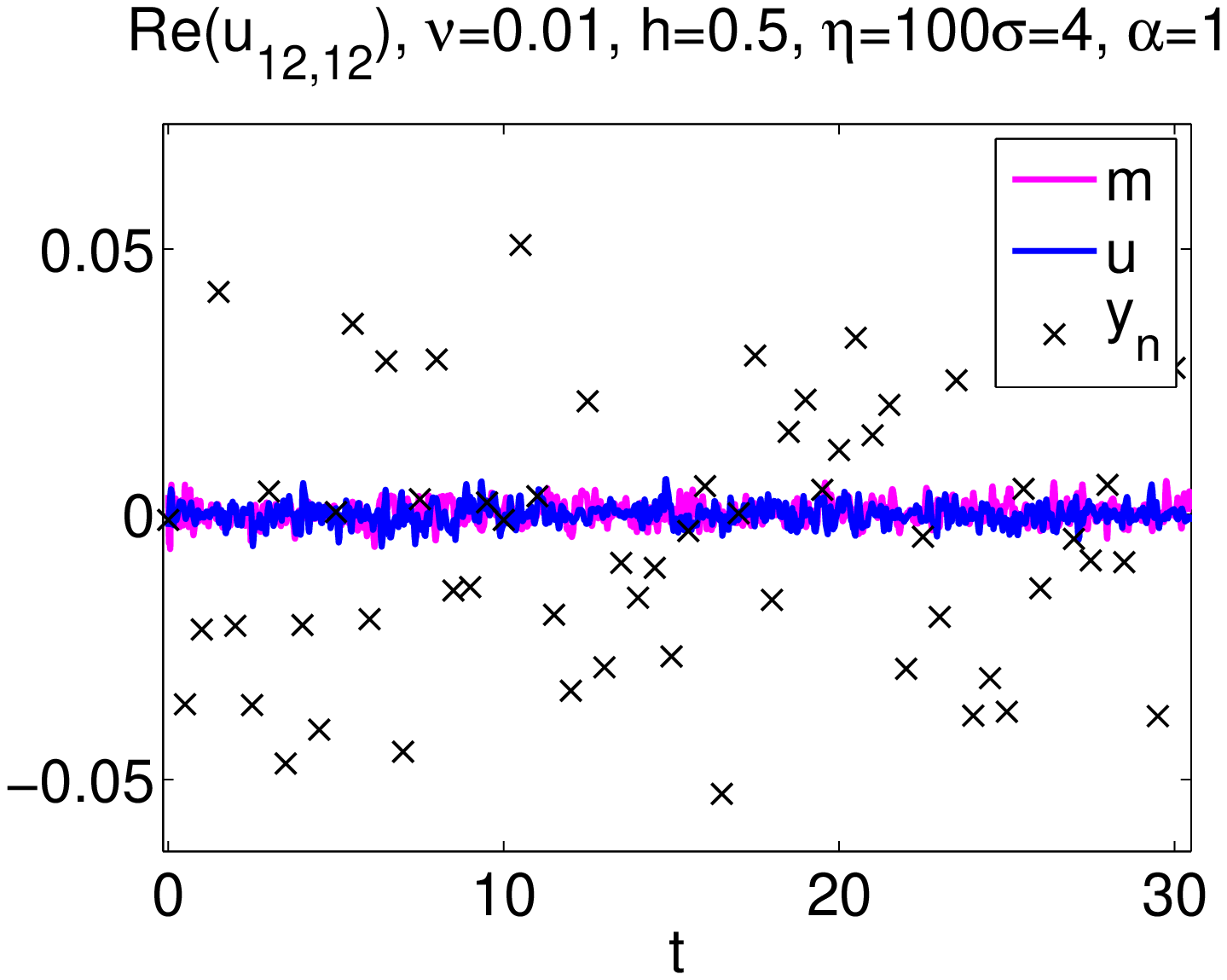}
\caption{Example of a destabilized trajectory for 3DVAR with 
the same parameters as in Fig. \ref{a1.1} except the larger
value of $\eta=100\sigma=4$.  Panels are the same.}
\label{a1.100}
\end{figure*}

\begin{figure*}
\includegraphics[width=.45\textwidth]{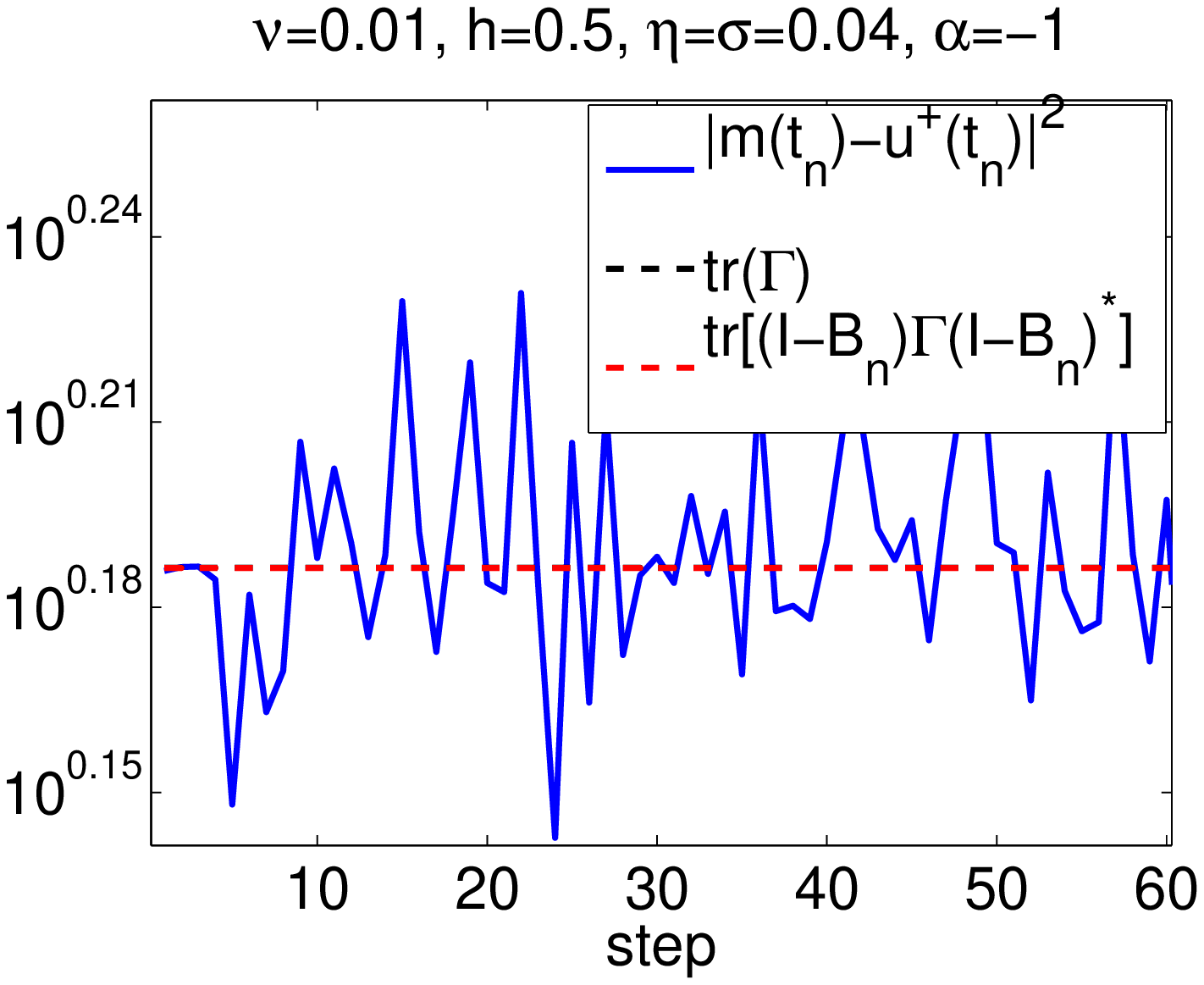}
\includegraphics[width=.45\textwidth]{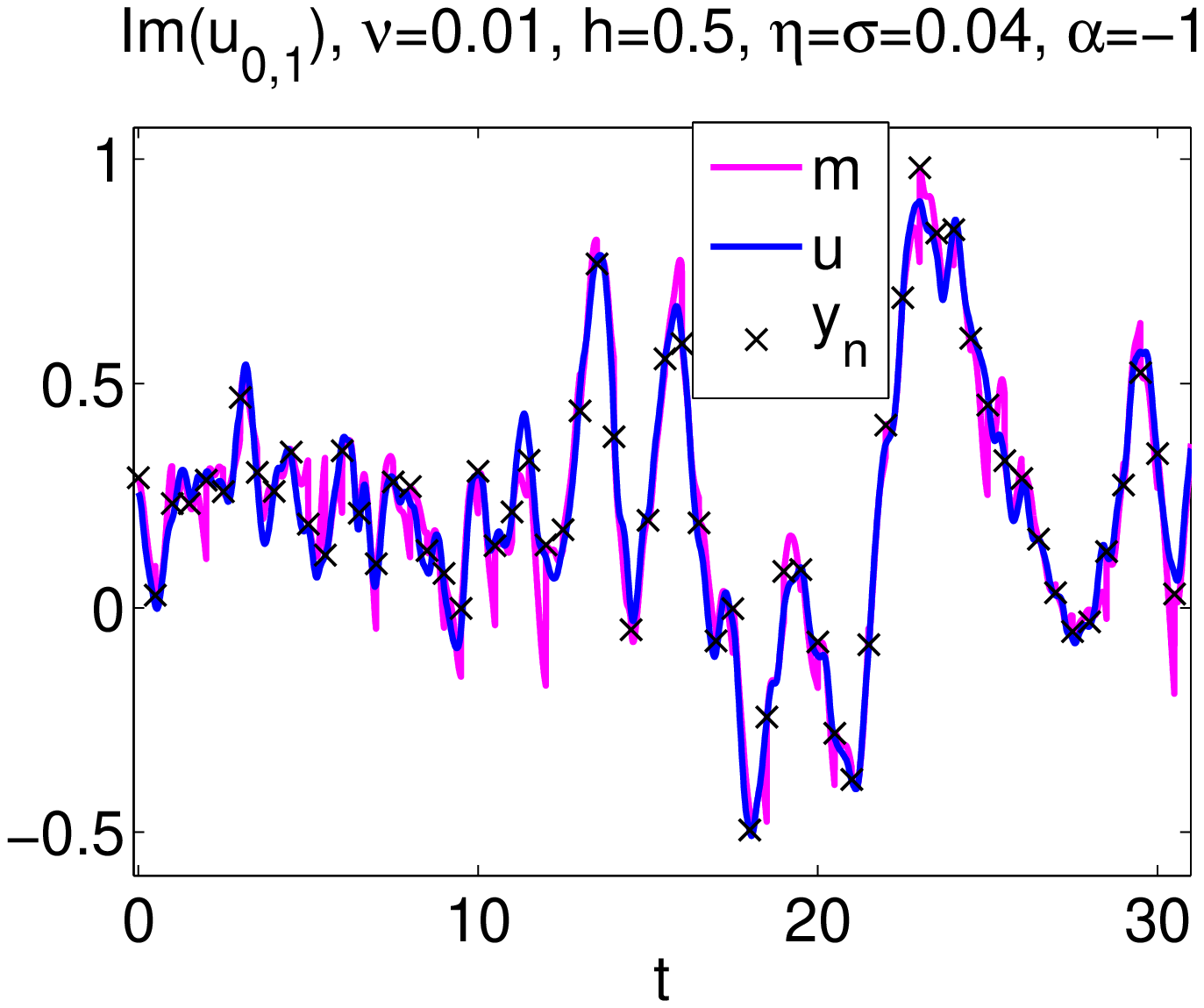}
\includegraphics[width=.45\textwidth]{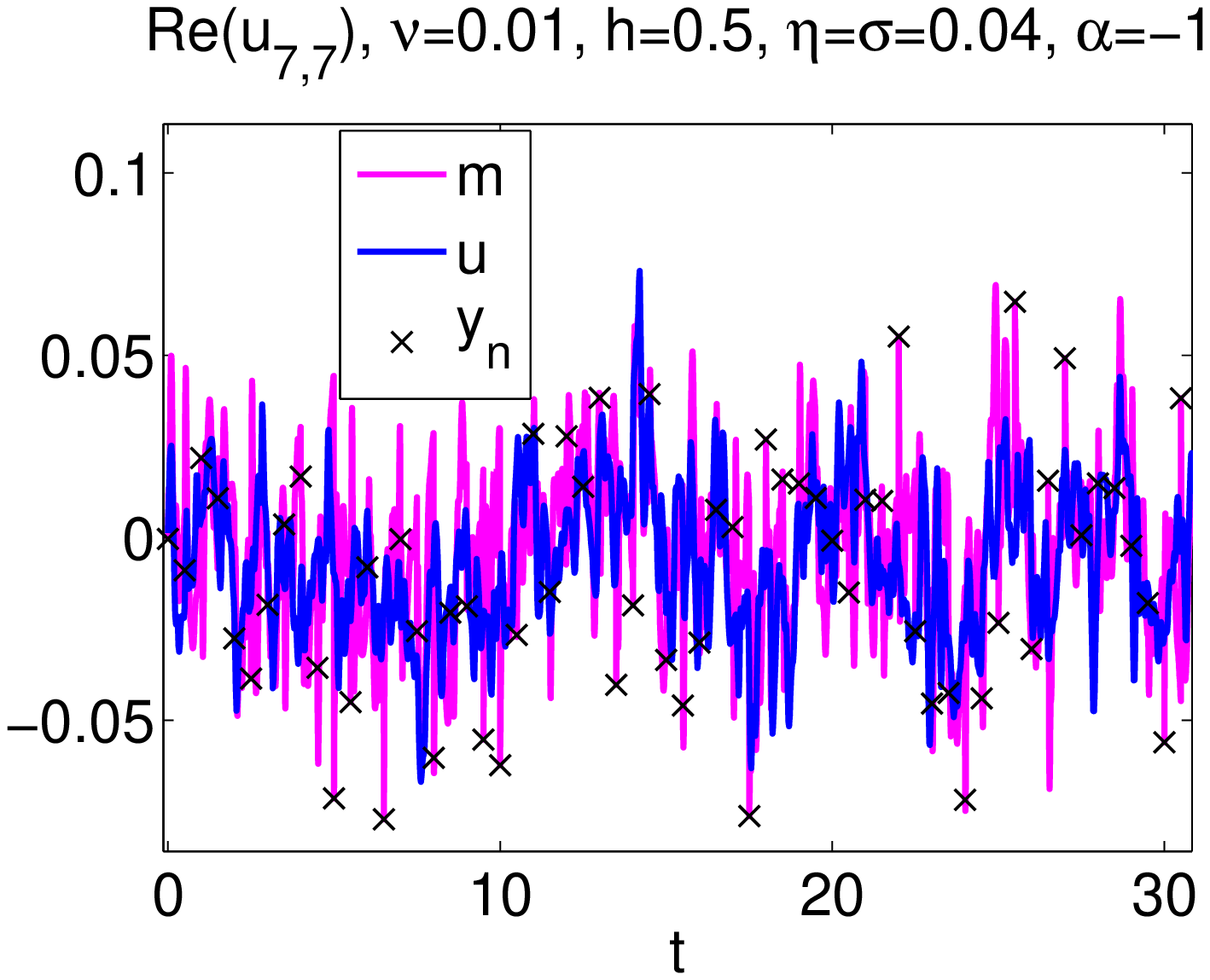}
\includegraphics[width=.45\textwidth]{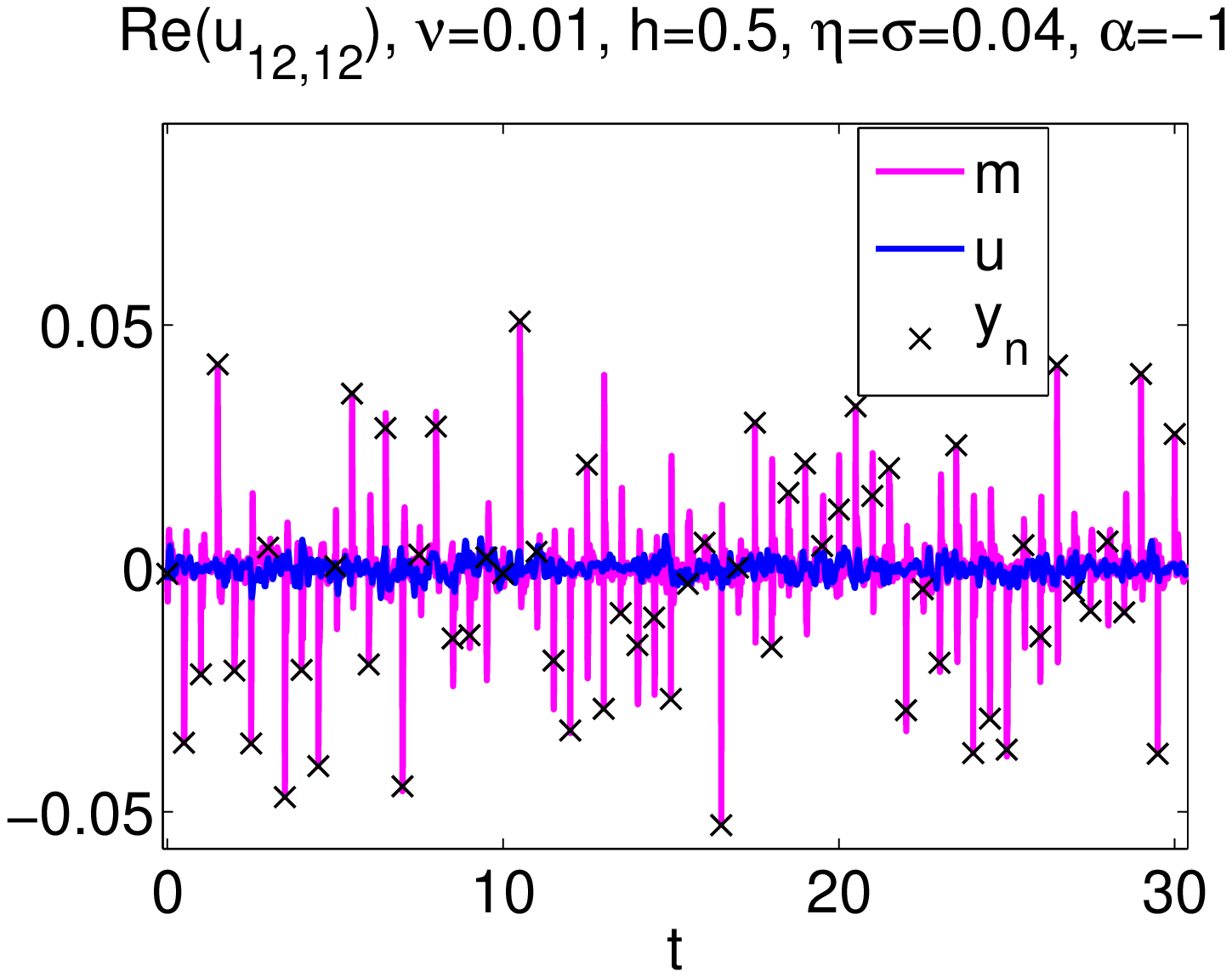}
\caption{Example of a stable trajectory for 3DVAR with 
the same parameters as in Fig. \ref{a1.1} except with
$\alpha=-1$.  Panels are the same.}
\label{am1.1}
\end{figure*}

\begin{figure*}
\includegraphics[width=.45\textwidth]{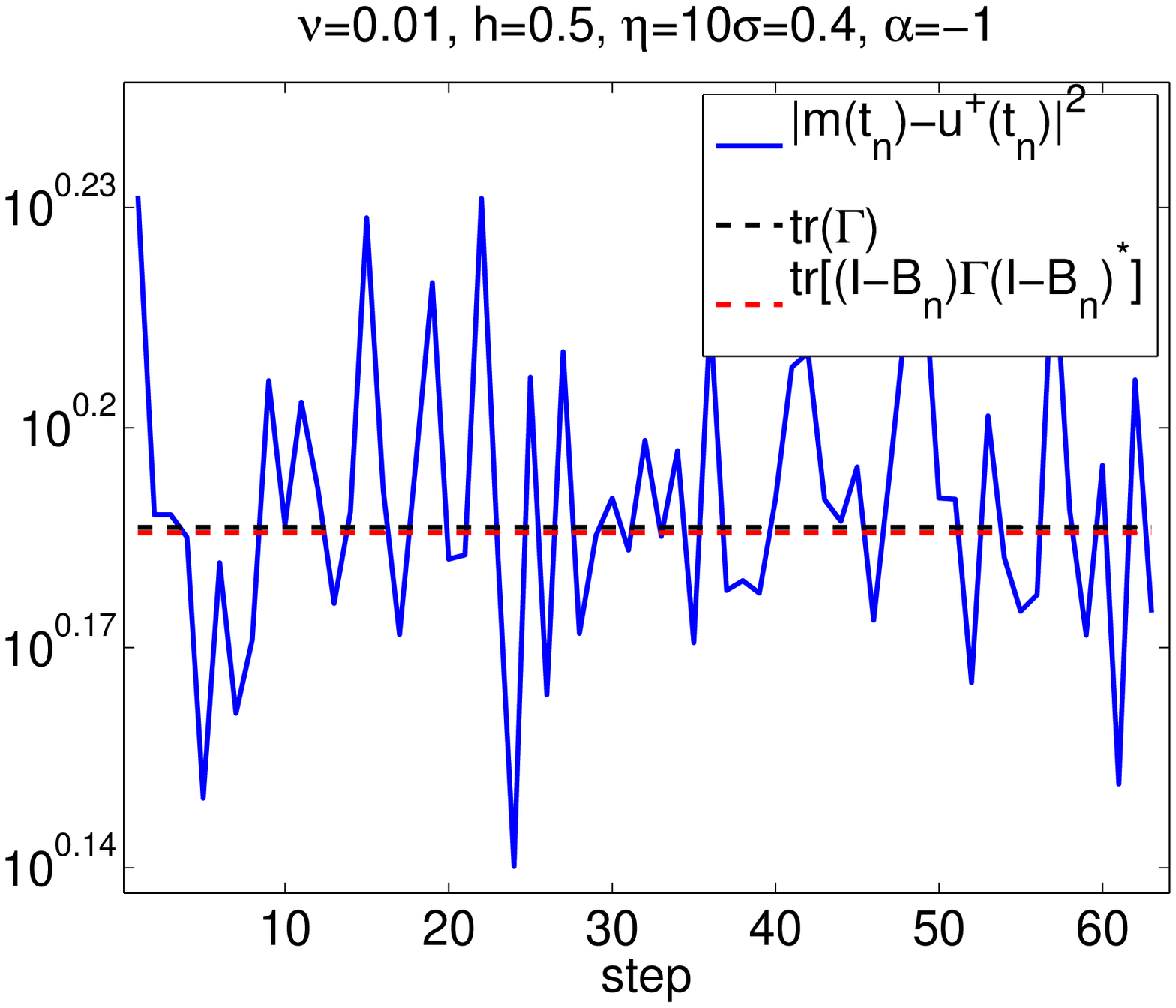}
\includegraphics[width=.45\textwidth]{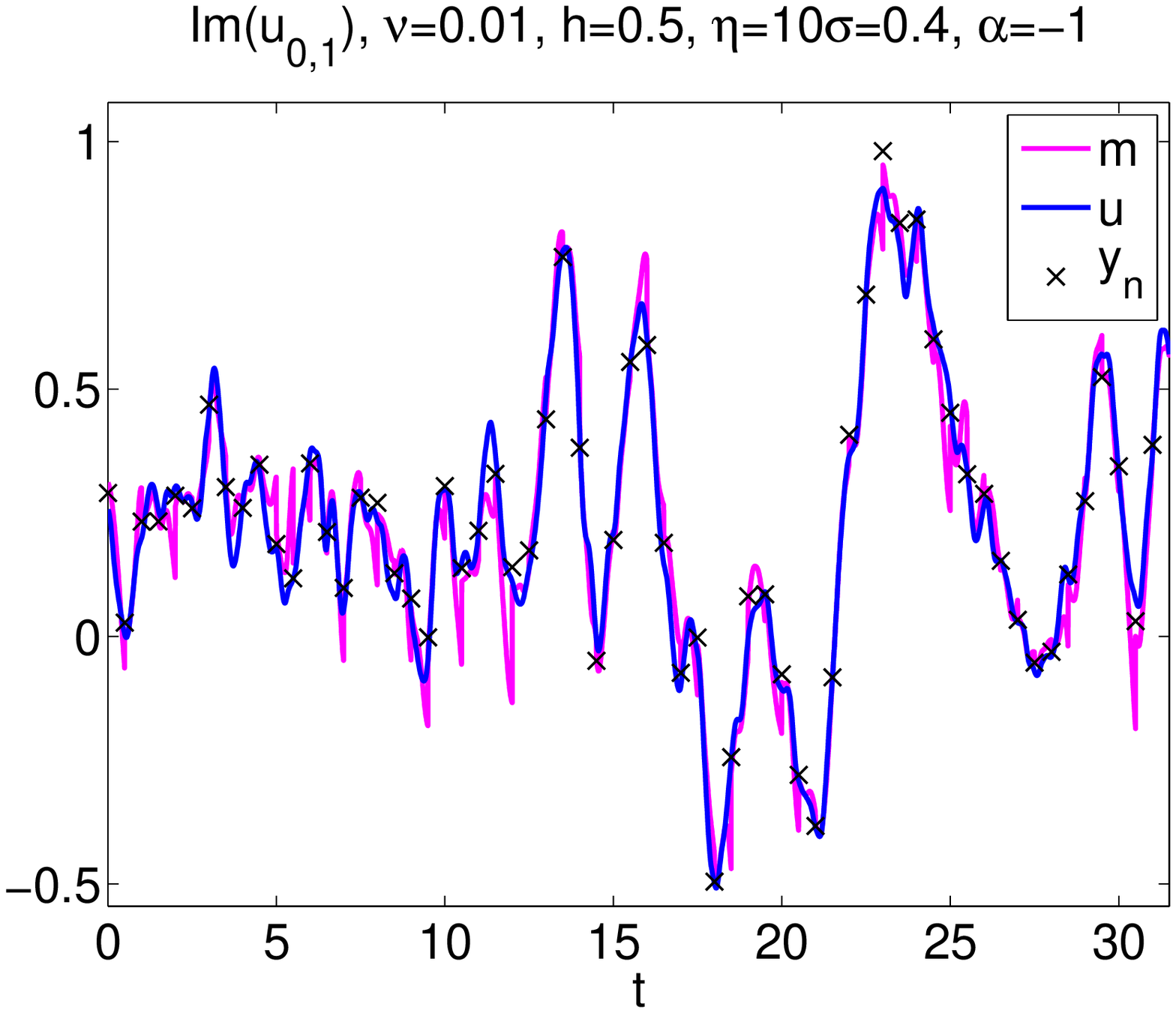}
\includegraphics[width=.45\textwidth]{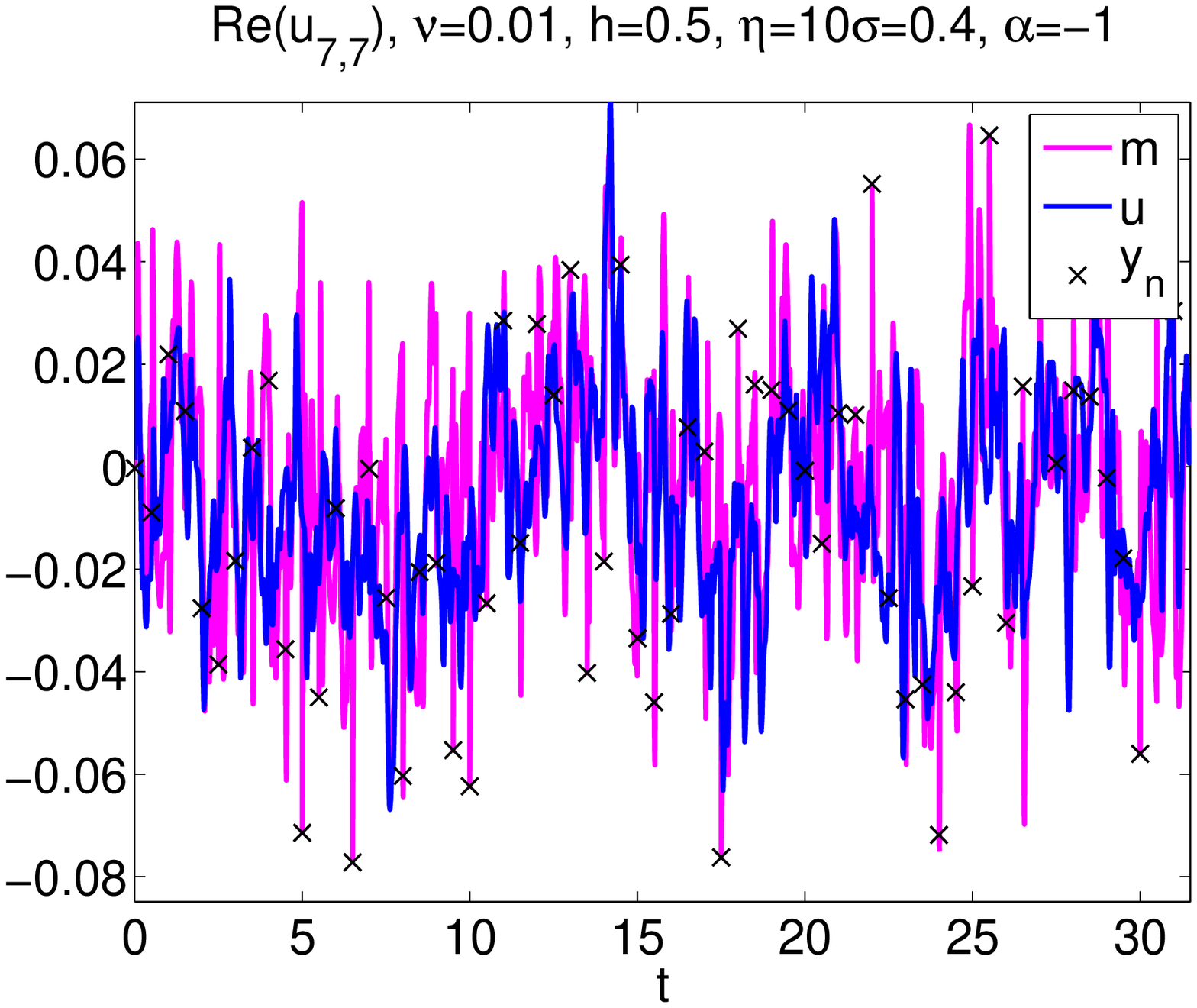}
\includegraphics[width=.45\textwidth]{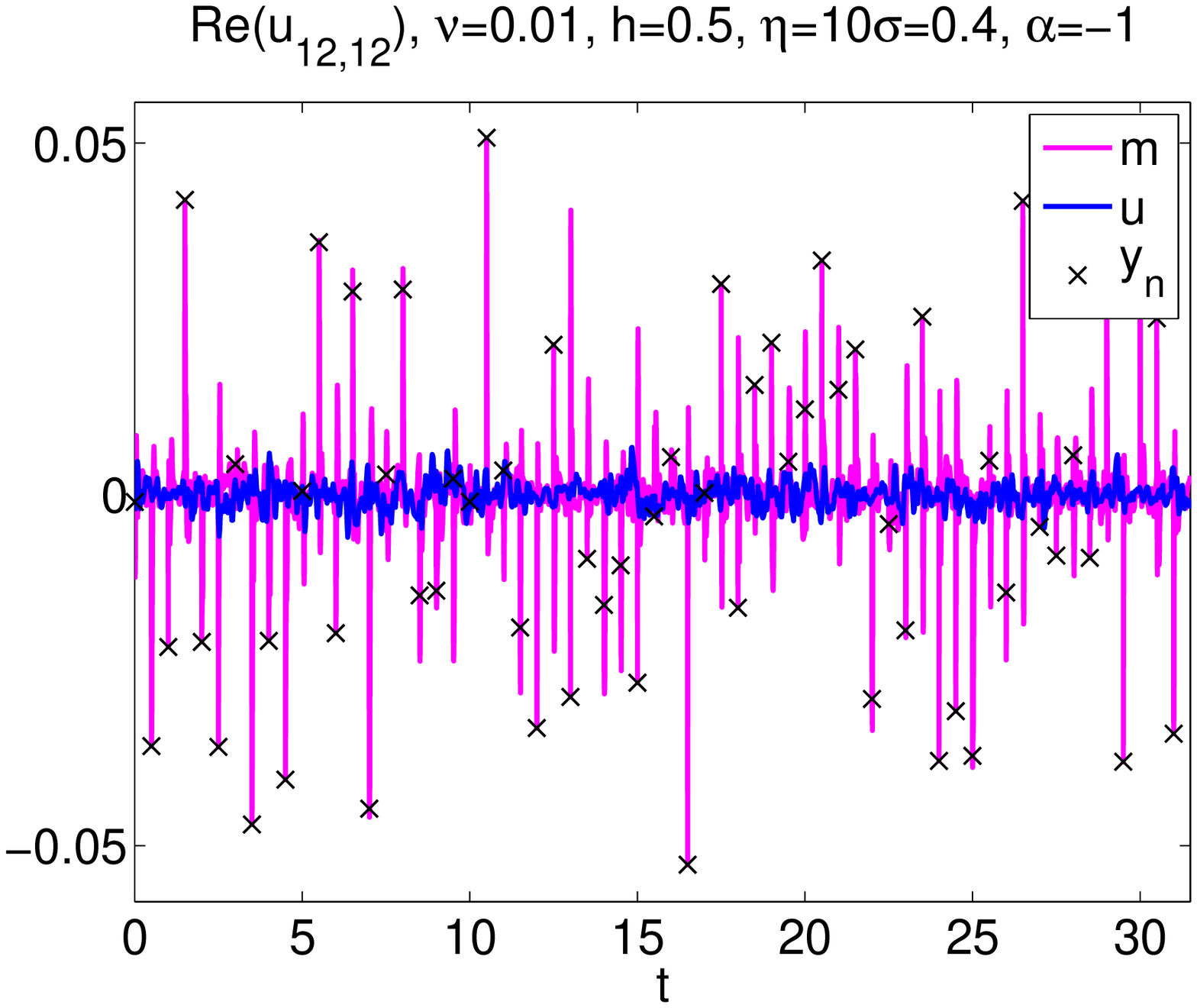}
\caption{Example of a stable trajectory for 3DVAR with 
the same parameters as in Fig. \ref{a1.10} except with
value of $\alpha=-1$.  Panels are the same.}
\label{am1.10}
\end{figure*}

\begin{figure*}
\includegraphics[width=.45\textwidth]{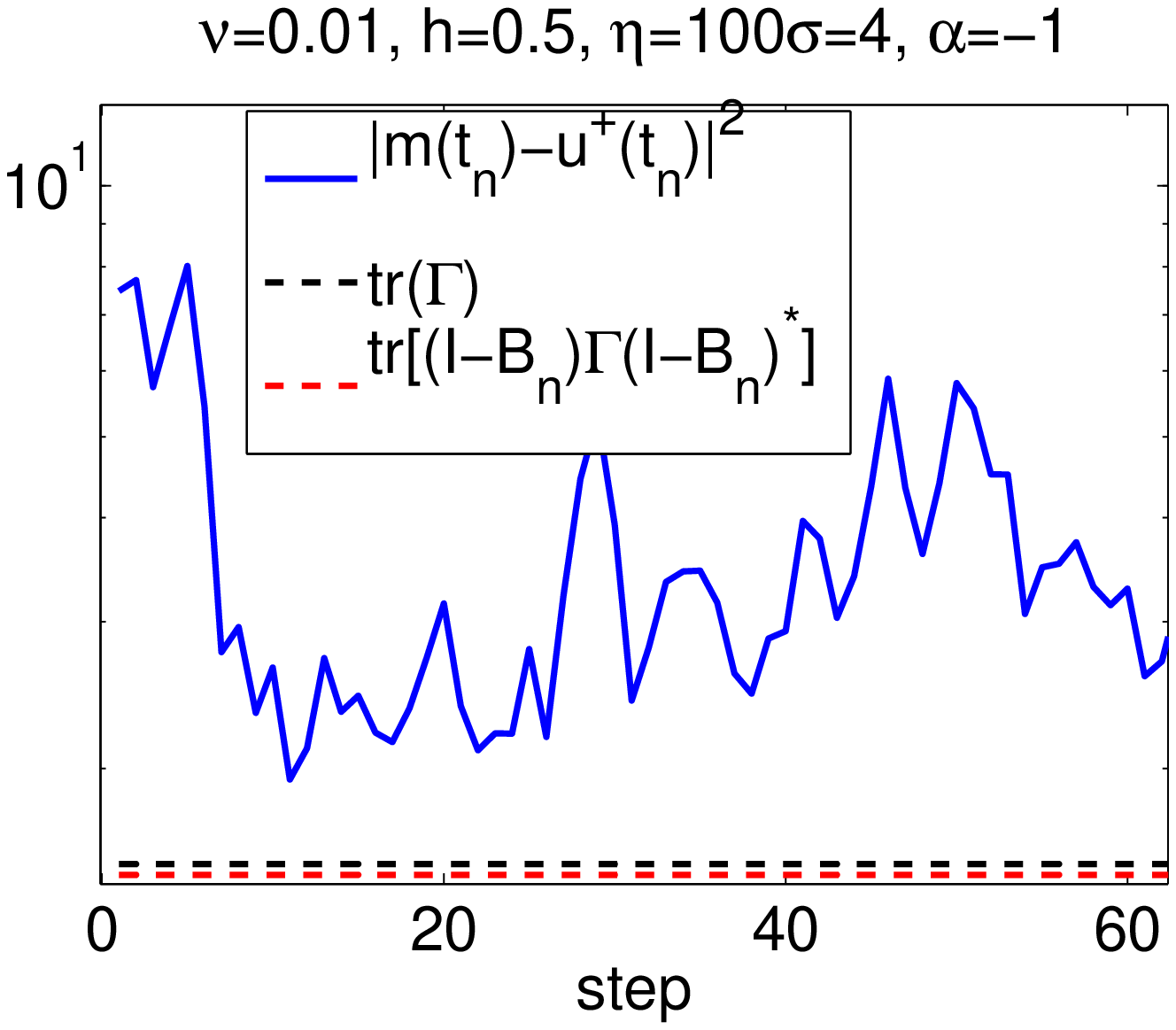}
\includegraphics[width=.45\textwidth]{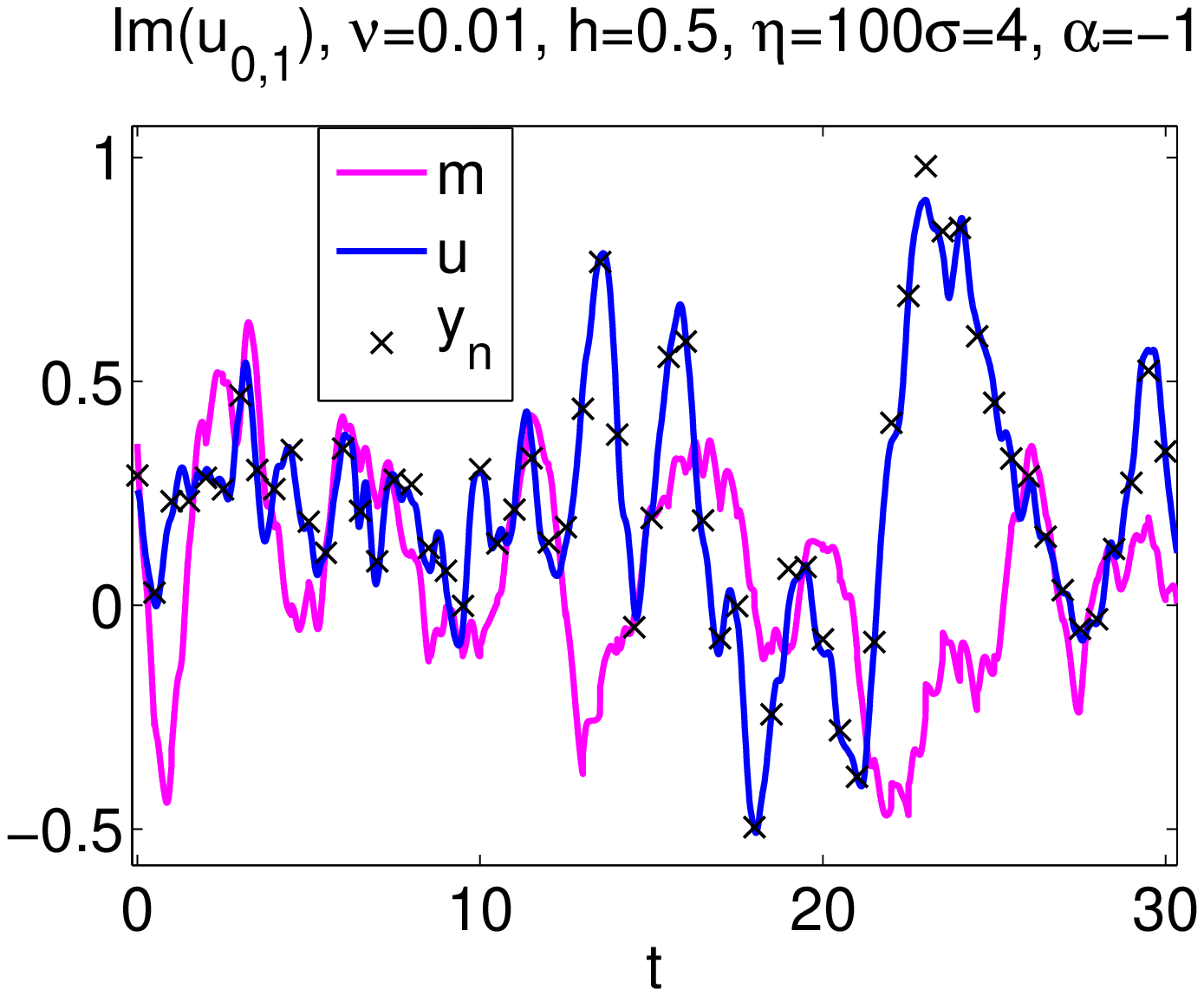}
\includegraphics[width=.45\textwidth]{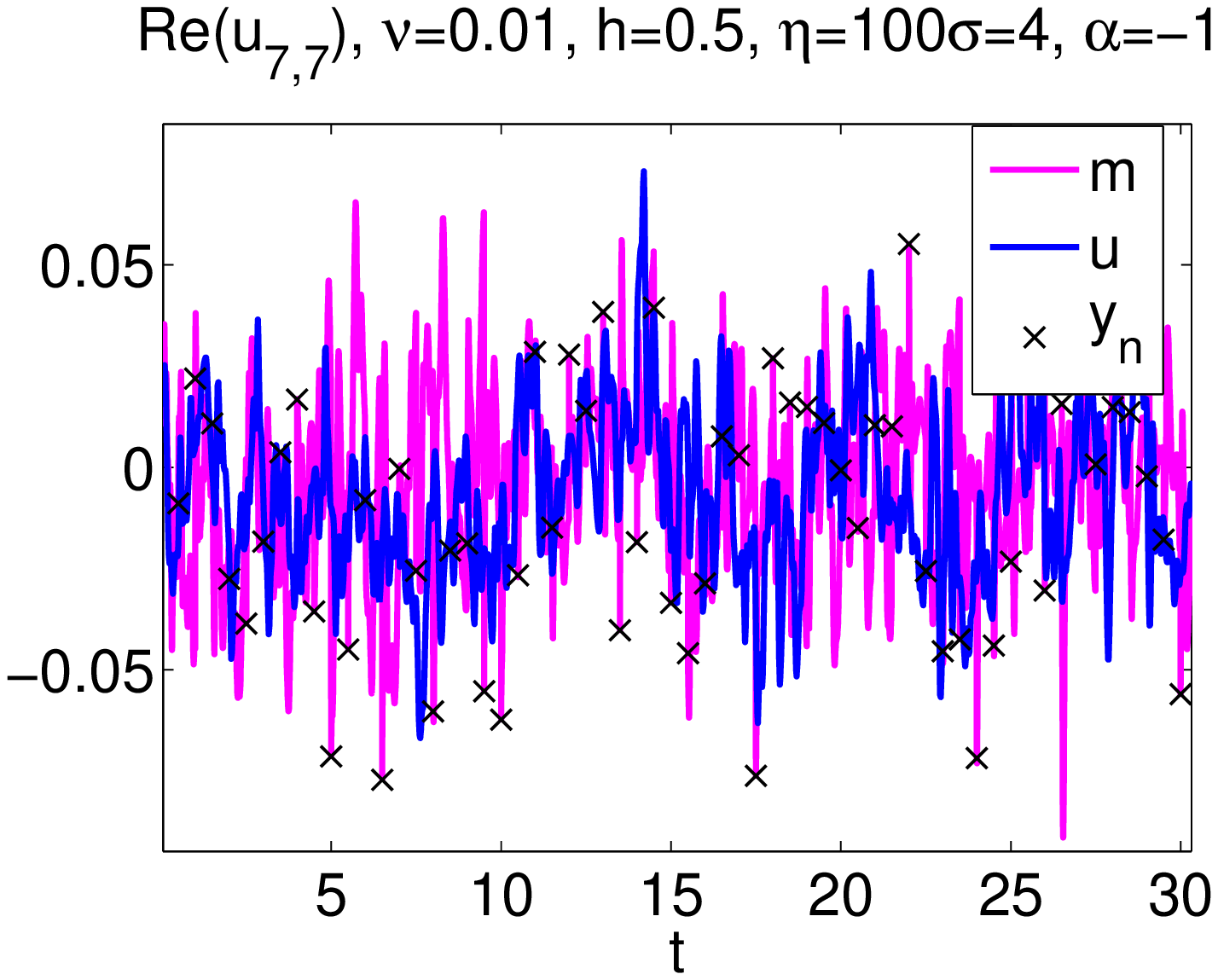}
\includegraphics[width=.45\textwidth]{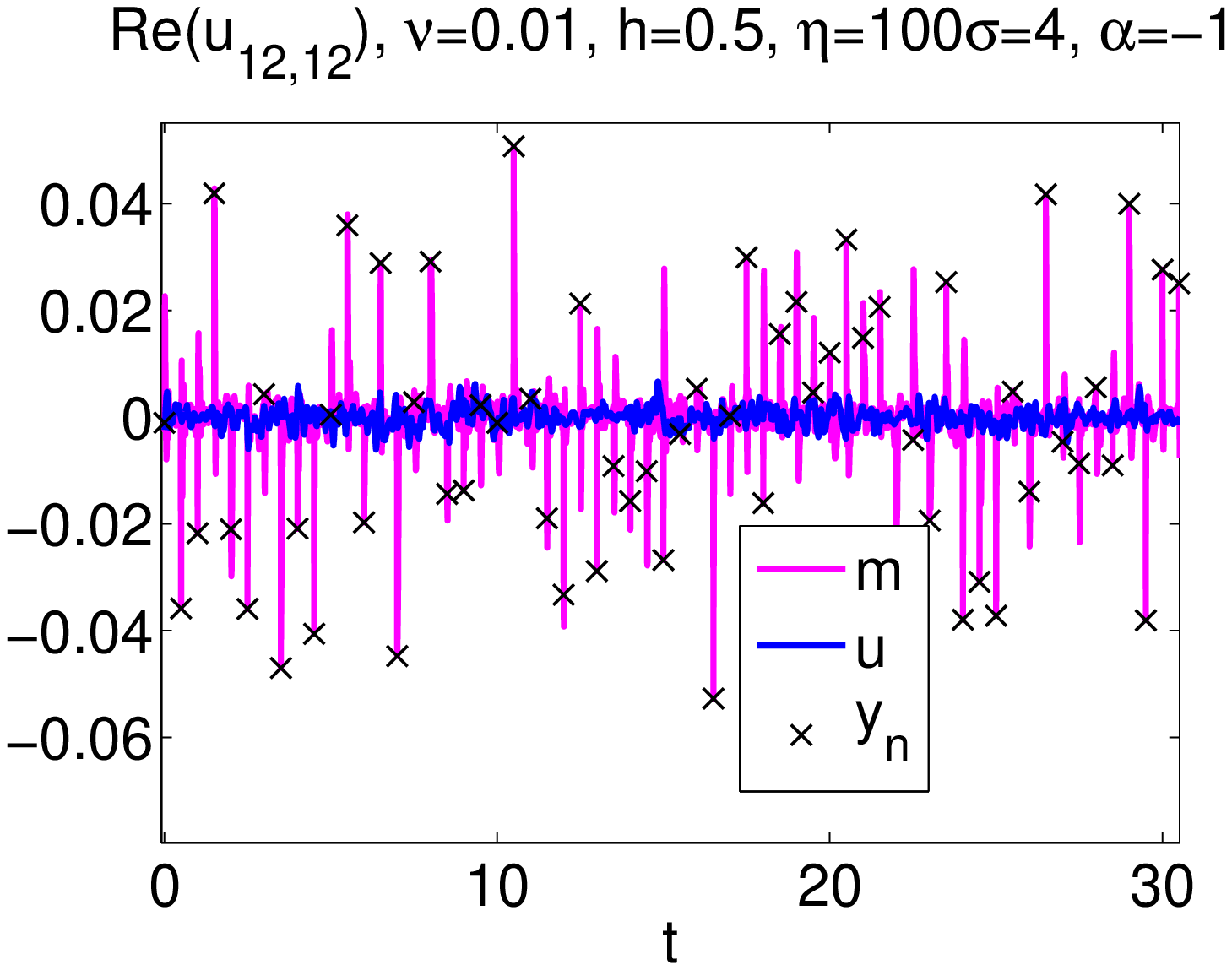}
\caption{Example of a destabilized trajectory for 3DVAR with 
the same parameters as in Fig. \ref{a1.100} except with
value of $\alpha=-1$.  Panels are the same.}
\label{am1.100}
\end{figure*}

\subsection{Partial Observations; Discrete Time}
\label{ssec:partial}

We now proceed to examine the case of partial observations,
illustrating Theorem \ref{t:m}.
Note that the forced mode has magnitude $|k_f|^2 =50$,
so ensuring that it is observed requires 
that $\lambda > 50\lambda_1$.
When enough modes are retained, for example when 
$\lambda=100\lambda_1$ in our setting, 
the results for the $\alpha=1$ case remain roughly 
the same and are not shown.  However, in the case $\alpha=-1$, 
in which the observations are trusted more than
the model at high wavevectors, 
the results are greatly improved by ignoring the observations of the
high-frequencies.  See Fig. \ref{am1.10.p10}.  
This improvement, and indeed the improvement beyond setting $B=0$
for both cases $\alpha=\pm 1$ disappears as $\lambda$ is decreased.
In particular, when $\lambda=25 \lambda_1$ the error is never very 
much smaller than the upper bound.  
This is due to the fact that the
dynamics of the low wavevectors tend to be unpredictable and
they contain very little useful information for the assimilation.
Then, for much smaller $\lambda=4 \lambda_1$,
once enough unstable modes are left unobserved,
there is no convergence.  
The order of magnitude of the error in the asymptotic 
regime as a function
of $\eta$ remains roughly consistent as $\lambda$ is decreased
until the estimator no longer converges.  
For small $h$ (high-frequency in time observations)
and complete observations,
the estimator can be slow to converge to the 
asymptotic regime.
In this case, the number of iterations until convergence, for a sufficiently 
small $\eta$, becomes significantly larger as $\lambda$ is decreased
(again until the estimator fails to converge at all).

Given $\lambda \approx k_\lambda^2\lambda_1$, we expect that
for $\eta$ sufficiently small the contribution of the model
to the filter will be negligible for all 
$k$ with $|k|<k_\lambda$ for $\alpha=1$.  Hence
the estimators for both $\alpha=\pm 1$ will behave similarly. 
An example of this is shown in Fig. \ref{a1am1.7.p01}
where $\eta=0.01\sigma=0.0004$ 
and $\lambda=49 \lambda_1$ 
in Fig. \ref{a1am1.7.p01}.  In both cases, the estimator is
essentially utilizing all the available observations. 
There are enough observations to draw the 
higher wavevectors of the estimator
closer to the truth than if we just set the population of 
those modes to zero.  
In contrast, as mentioned above, when $\lambda=25 \lambda_1$,
there are not enough observations even when they are all used, 
and the error is roughly the same as the upper bound 
as $\eta \rightarrow 0$ (not shown).

\begin{figure*}
\includegraphics[width=.45\textwidth]{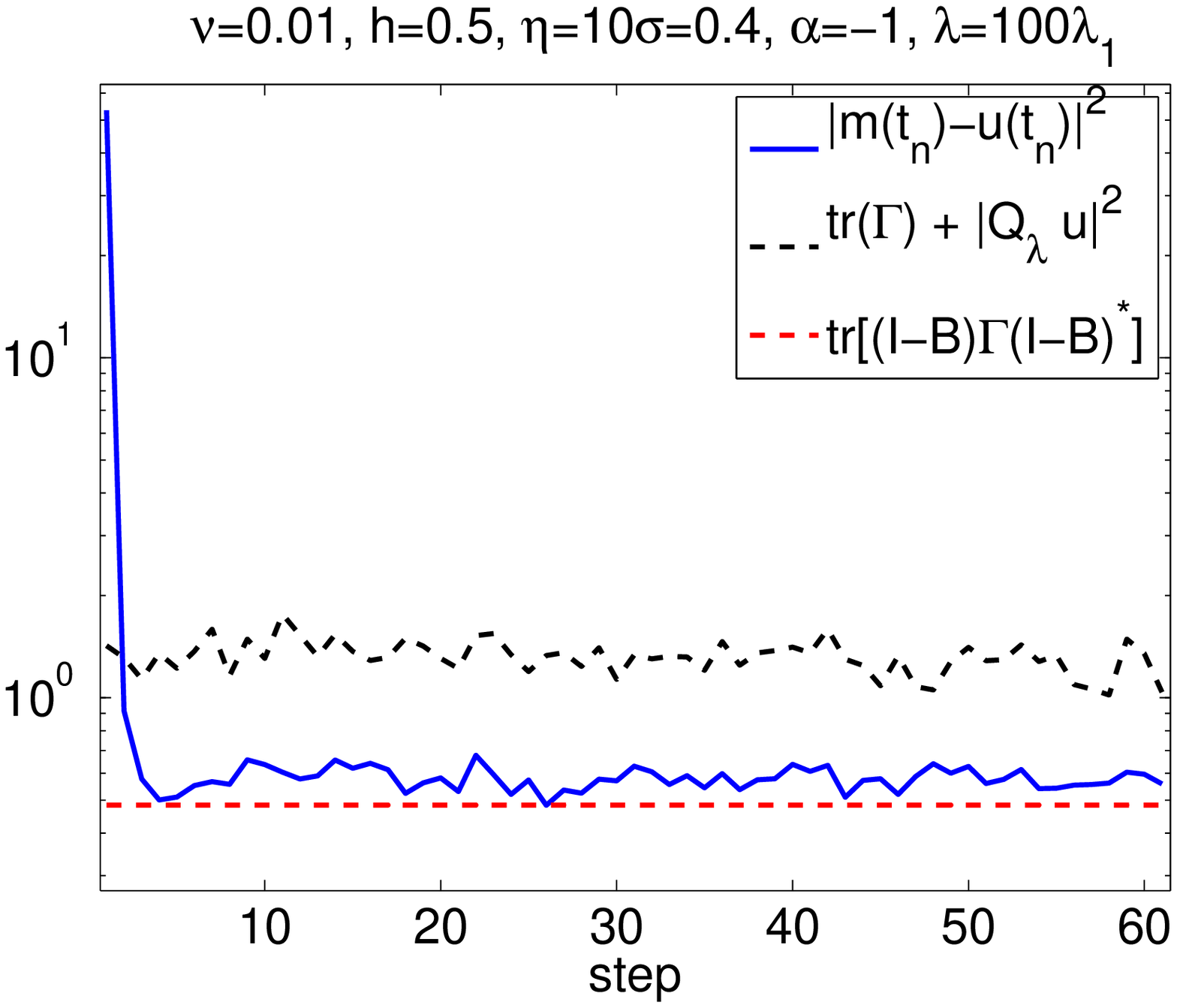}
\includegraphics[width=.45\textwidth]{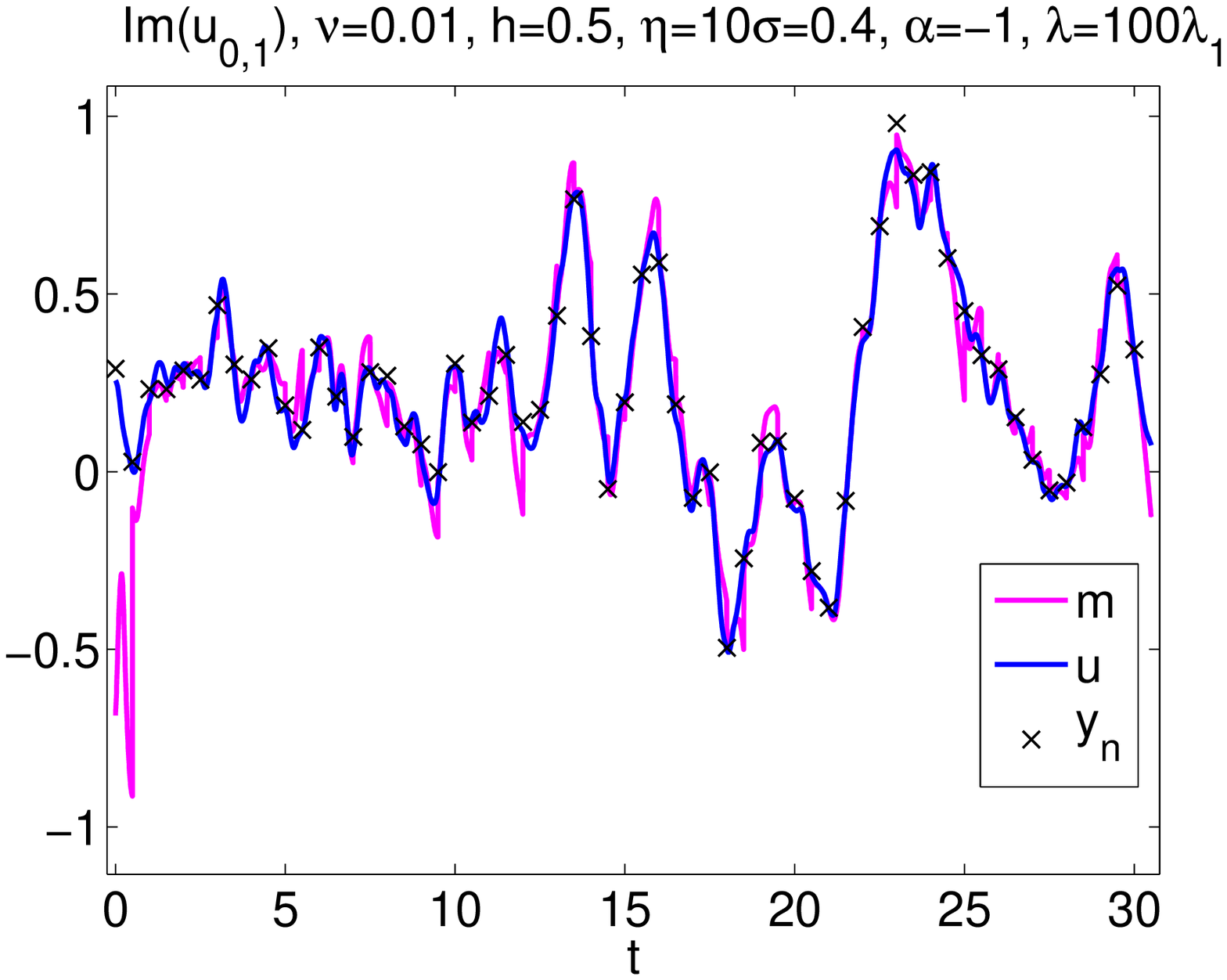}
\includegraphics[width=.45\textwidth]{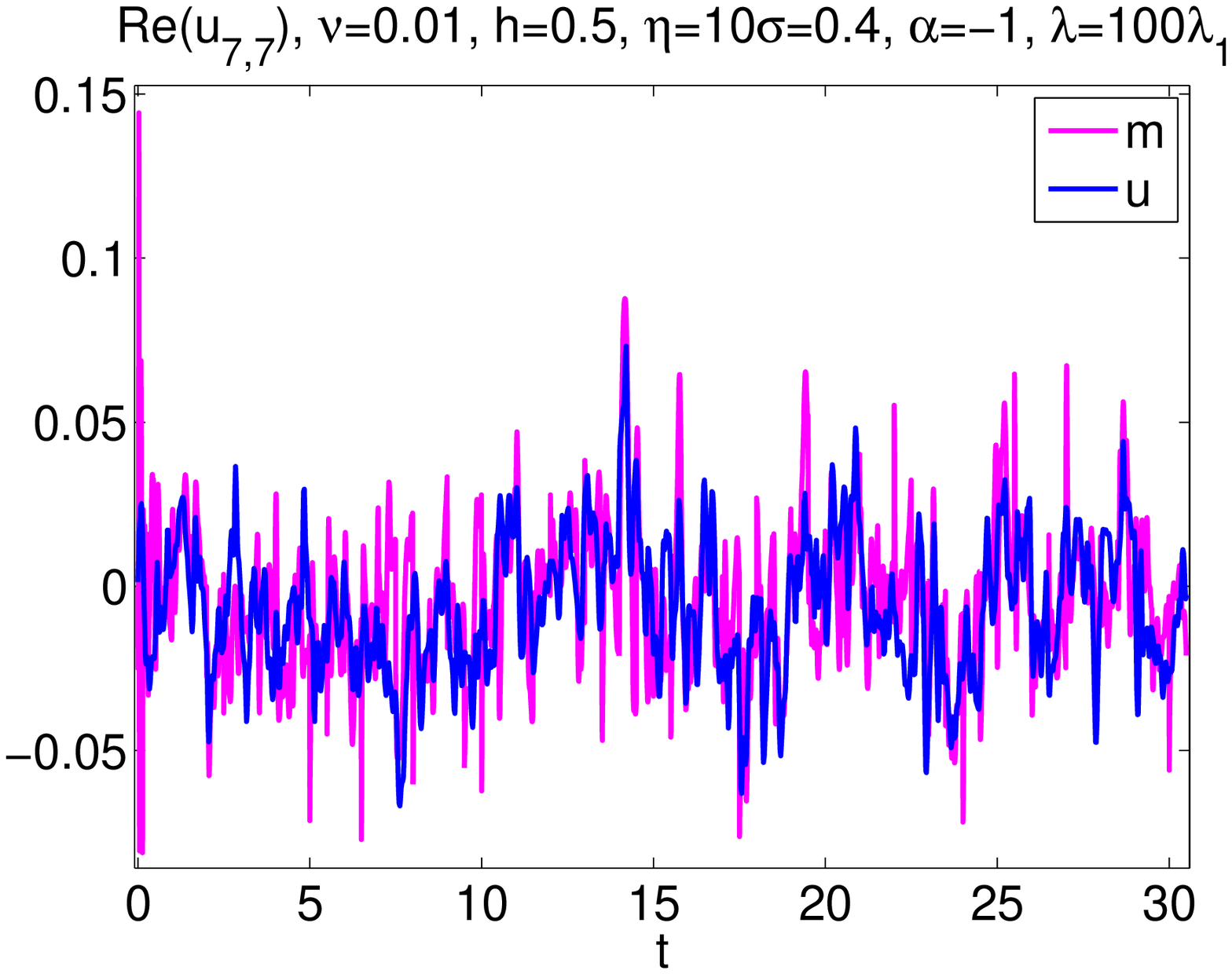}
\includegraphics[width=.45\textwidth]{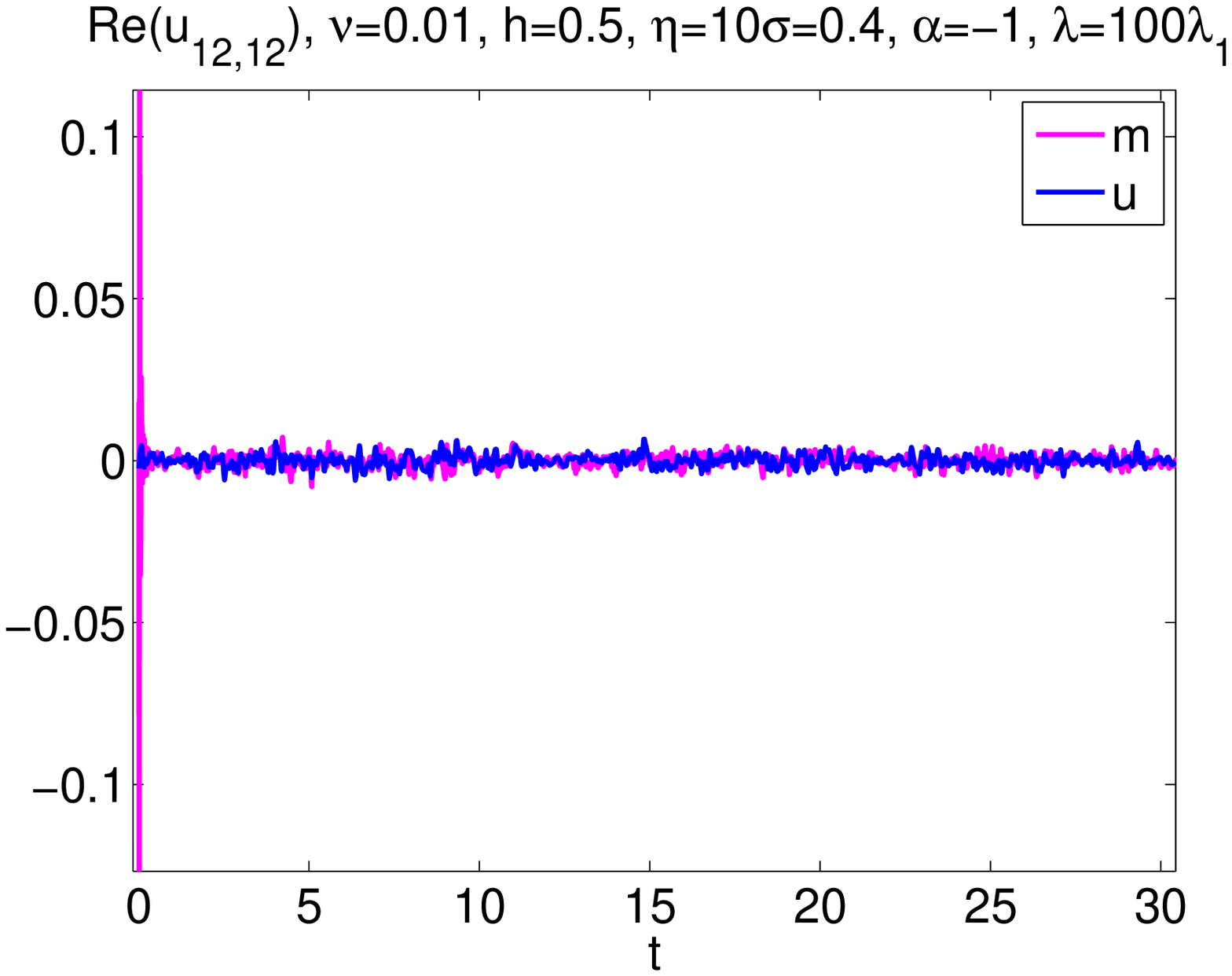}
\caption{Example of an improved estimator for partial observations
  with $\lambda=100 \lambda_1$ and otherwise the  
same parameters as in Fig. \ref{am1.10}. Panels are the same.}
\label{am1.10.p10}
\end{figure*}


\begin{figure*}
\includegraphics[width=.45\textwidth]{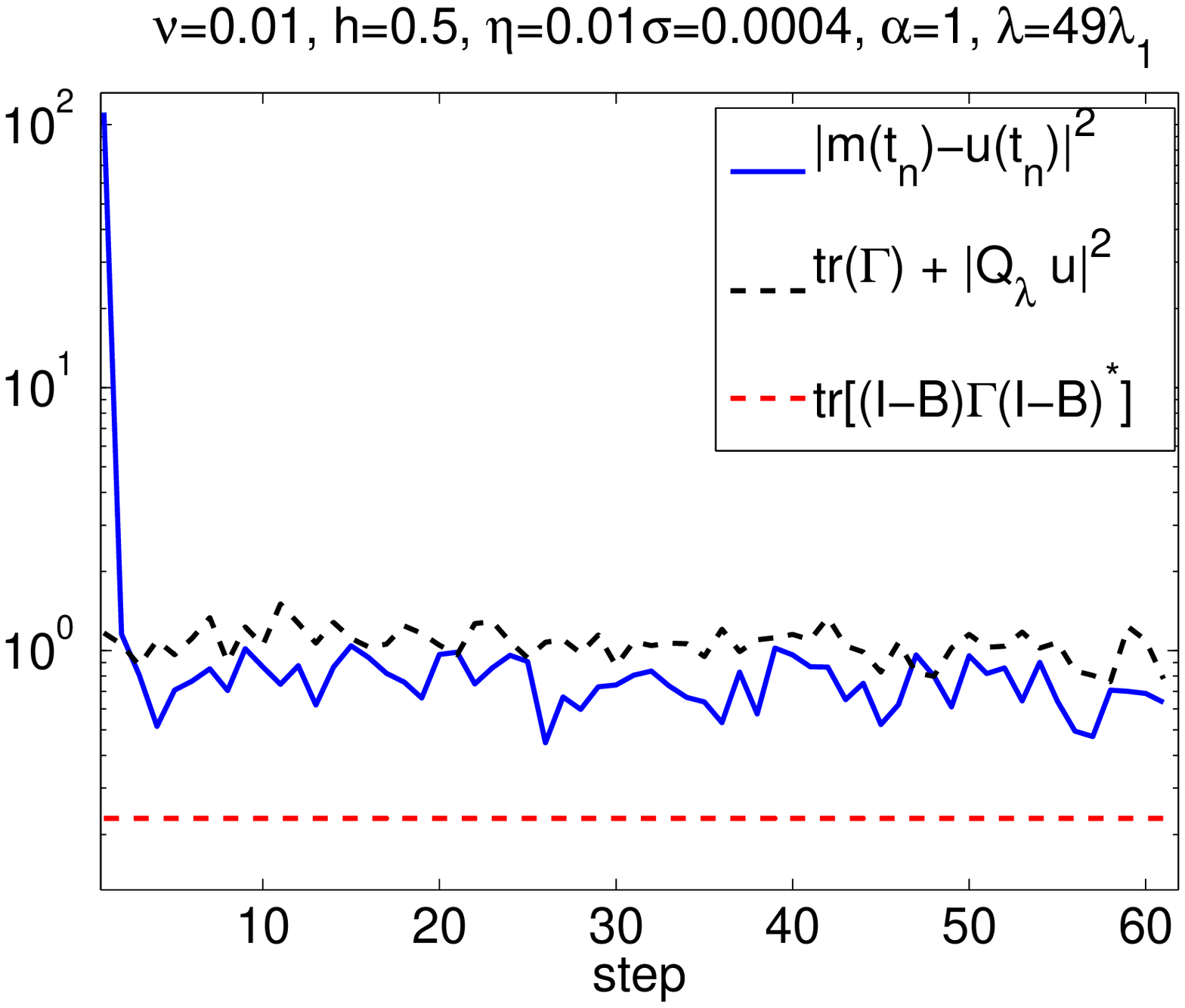}
\includegraphics[width=.45\textwidth]{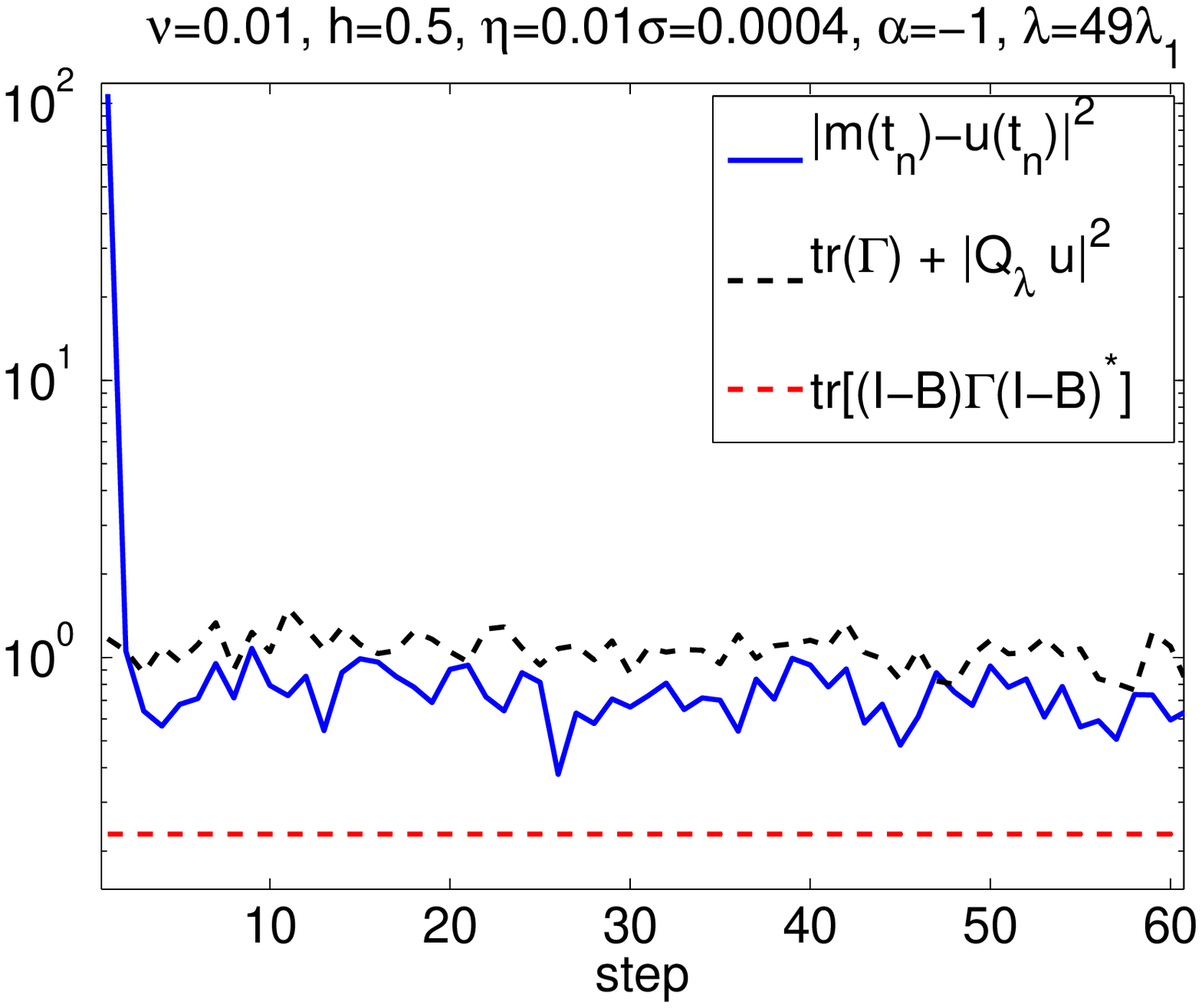}
\includegraphics[width=.45\textwidth]{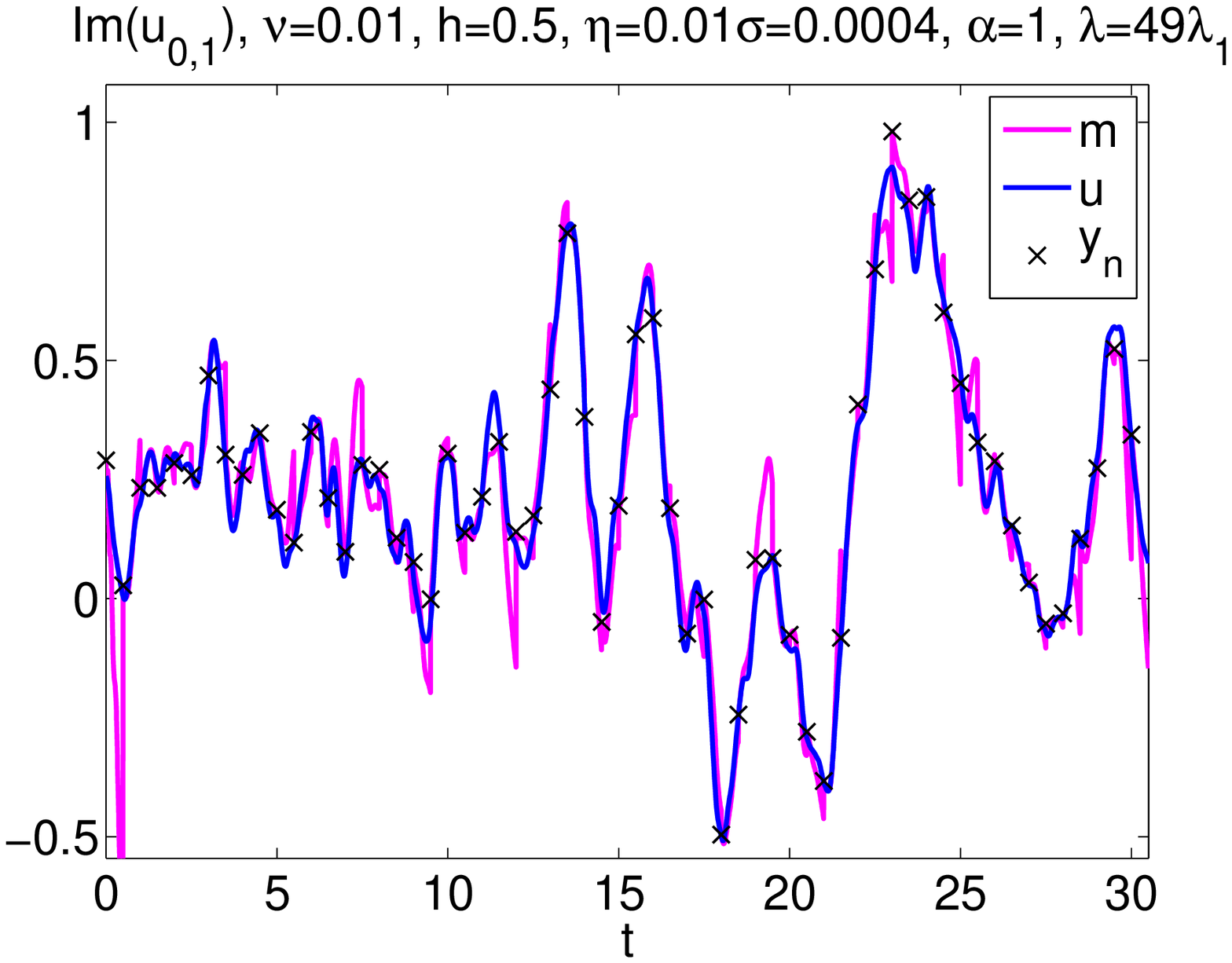}
\includegraphics[width=.45\textwidth]{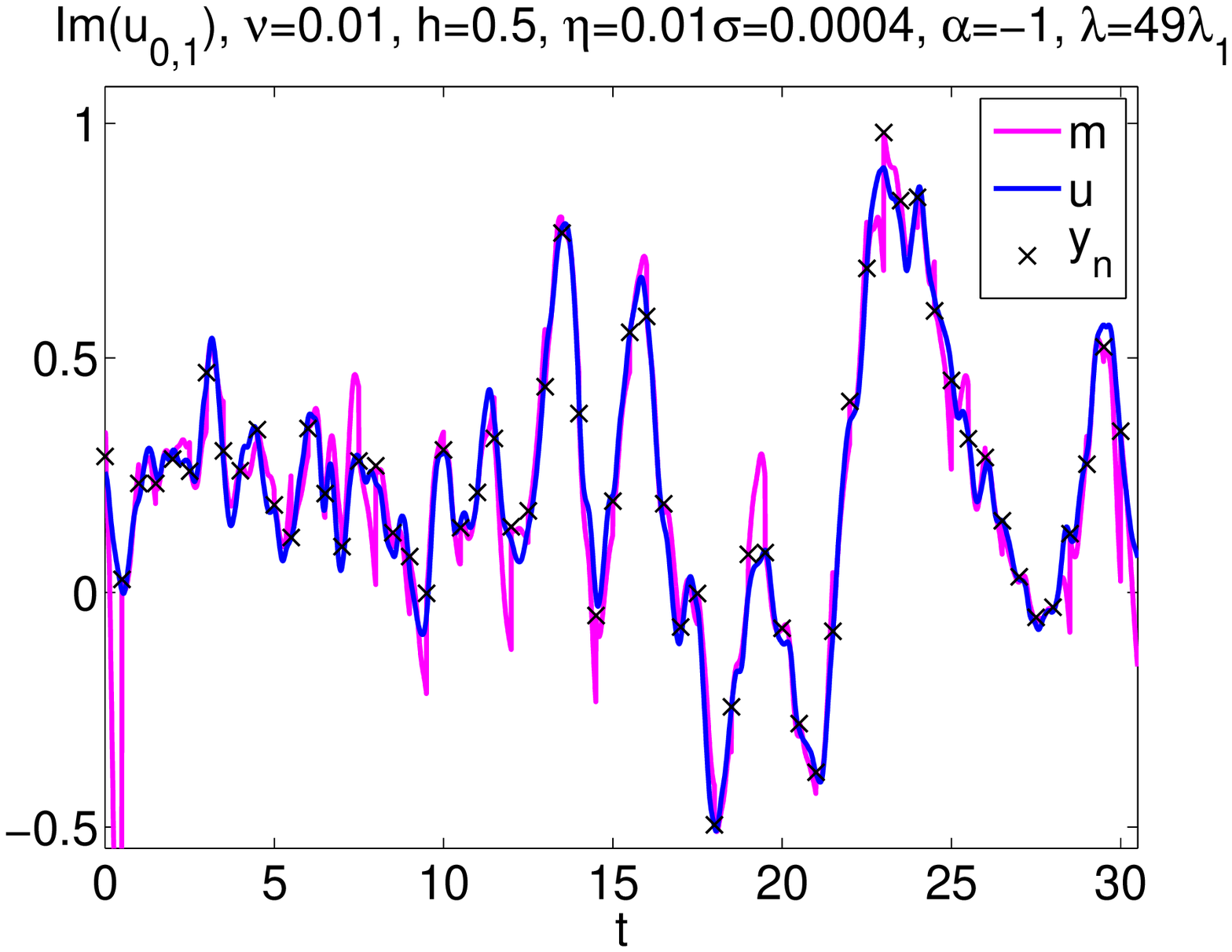}
\caption{Examples of estimators for partial observations 
with $\lambda=49 \lambda_1^2$ and $\eta=0.1\sigma=0.004$, otherwise the  
same parameters as in Figs. \ref{a1.1} and \ref{am1.1}. Left panels
are for $\alpha=1$ and right panels are for $\alpha=-1$.}
\label{a1am1.7.p01}
\end{figure*}

\subsection{Continuous Observations}
\label{ssec:continuous}

Finally, we explore the case of continuous 
observations using the SPDE and PDE derived in
subsection \ref{ssec:spde}. We let $\alpha=1/2$ throughout.
We invoke a split-step scheme to solve Eqns. \ref{eq:nse2}
and \ref{eq:nse3} in which we compose solutions of (\ref{eq:nse})
and the Ornstein-Uhlenbeck process
\begin{equation}
\frac{\rd \hu}{\rd t} + 
\omega A_0^{-2\alpha}(\hu-u)= \omega \sigma_0
A_0^{-2\alpha-\beta}\frac{\rd W}{\rd t}, 
\quad \hu(0)=\hu_0,
\label{eq:nse222}
\end{equation}
at each step, when $r=1$, and setting $\sigma_0=0$ 
in (\ref{eq:nse222}) when $r<1$.
We begin by considering the $r=1$ case in which we recover
the SPDE (\ref{eq:nse2}). Notice that the parameter $\omega$
sets a time-scale for relaxation towards the true signal,
and  $\sigma_0$ sets a scale for the size of flucuations
about the true signal. The parameter $\beta$
rescales the fluctuation size at different wavevectors
with respect to the relaxation time.
First we consider setting $\beta=0.$
In Fig. \ref{a1c5.t} we show numerical experiments 
with $\omega=100$ and $\sigma_0=0.05.$ 
We see that the noise level
on top of the signal in the low modes is almost $O(1)$, and
that the high modes do not synchronize at all; the total error 
remains $O(1)$ although trends in the signal are followed.  
On the other hand, for the smaller value of $\sigma_0=0.005$,
still with $\omega=100$,
the noise level on the signal in the low modes is moderate,
the high modes synchronize sufficiently well, 
and the total error is small. See Fig. \ref{a1c05.t}.

Now we consider the case $\beta=1$. 
Again we take $\omega=100$ and
$\sigma_0=0.05$ and $0.005$
in Figures \ref{a1dc5.t}
and \ref{a1dc05.t}, respectively.  
The synchronization is stronger than
that observed for $\beta=0$ in each case. This is because 
the noise decays more rapidly for large wavevectors when
$\beta$ is increased, as can be observed in the relatively 
smooth trajectories of the high modes of the estimator.

For the case when $r<1$ and we recover a PDE, the values of
$\sigma_0$ and $\beta$ are irrelevant.  The value of $\omega$
is the critical parameter in this case. 
For values of $\omega$ of $O(100)$ the 
convergence is exponentially
fast to machine precision.  For values of $\omega$ of 
$O(1)$ the estimator does not exhibit stable
behviour.  For intermediate values, 
the estimator may approach the signal and remain bounded and 
still an $O(1)$ distance away(see the case $\omega=10$ 
in Fig. \ref{d10.t}), or else 
it may come close to synchronizing (see the case $\omega=30$
in Fig. \ref{d30.t}).

\begin{figure*}
\includegraphics[width=1\textwidth]{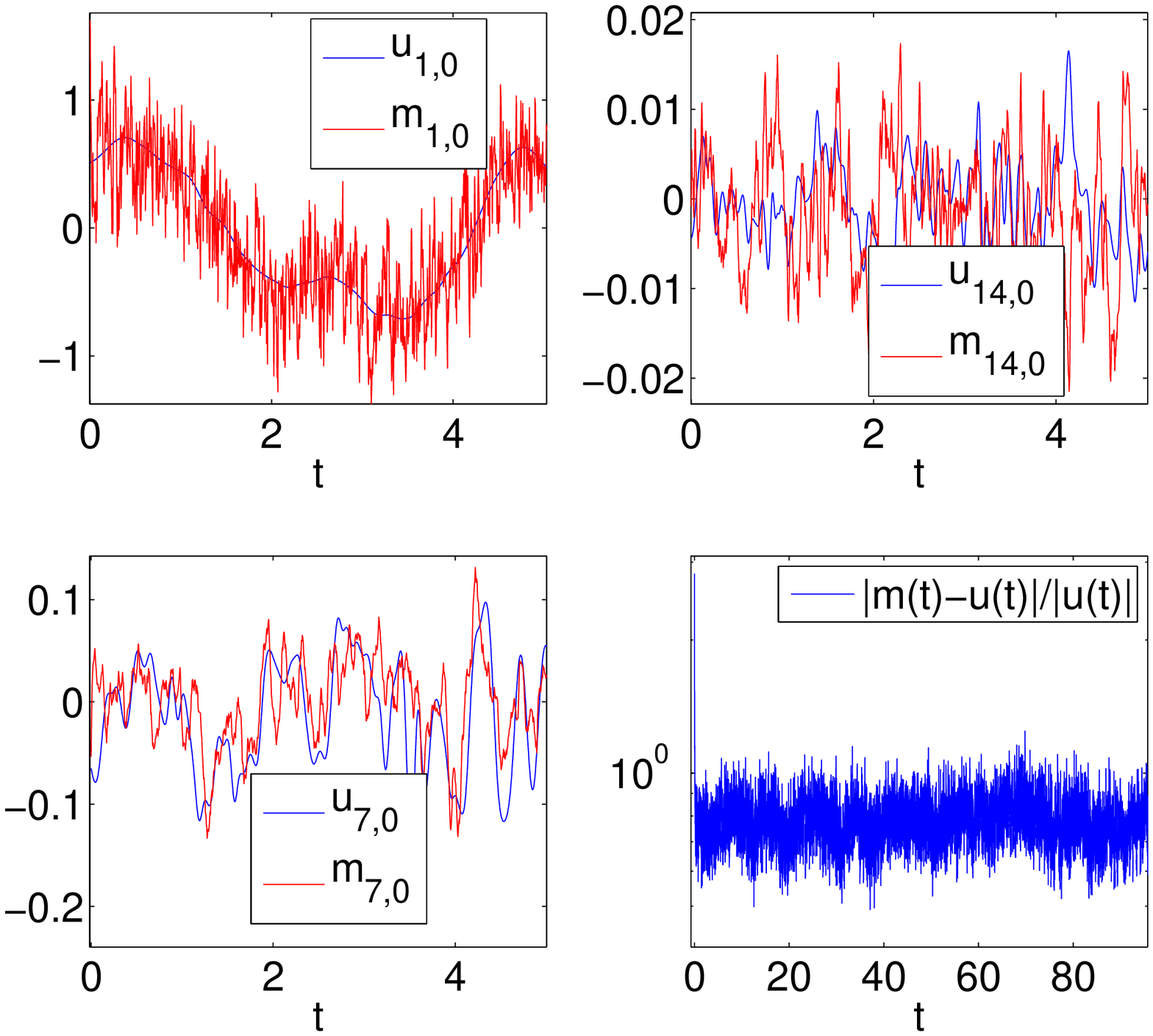}
\caption{Trajectories of various modes of the estimator $\hu$ 
and the signal $u$ are depicted above for 
$\beta=0$ and $\sigma_0=0.05$, along with the total relative 
error in the $l^2$ norm, $|\hu-u|/|u|$.}
\label{a1c5.t}
\end{figure*}


\begin{figure*}
\includegraphics[width=1\textwidth]{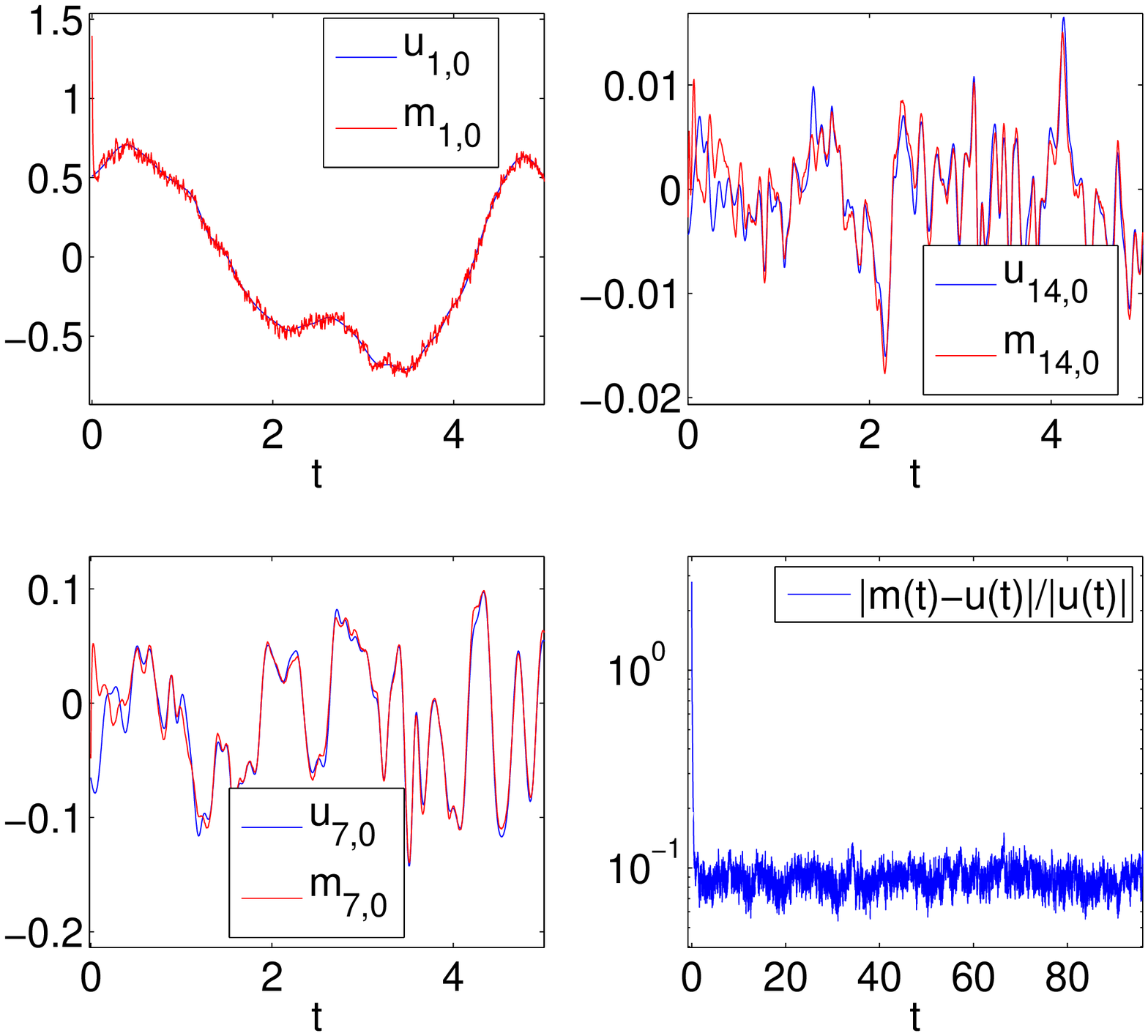}
\caption{Trajectories of various modes of the estimator $\hu$ 
and the signal $u$ are depicted above for $\beta=0$ and 
$\sigma_0=0.005$, along with the relative error.}
\label{a1c05.t}
\end{figure*}


\begin{figure*}
\includegraphics[width=1\textwidth]{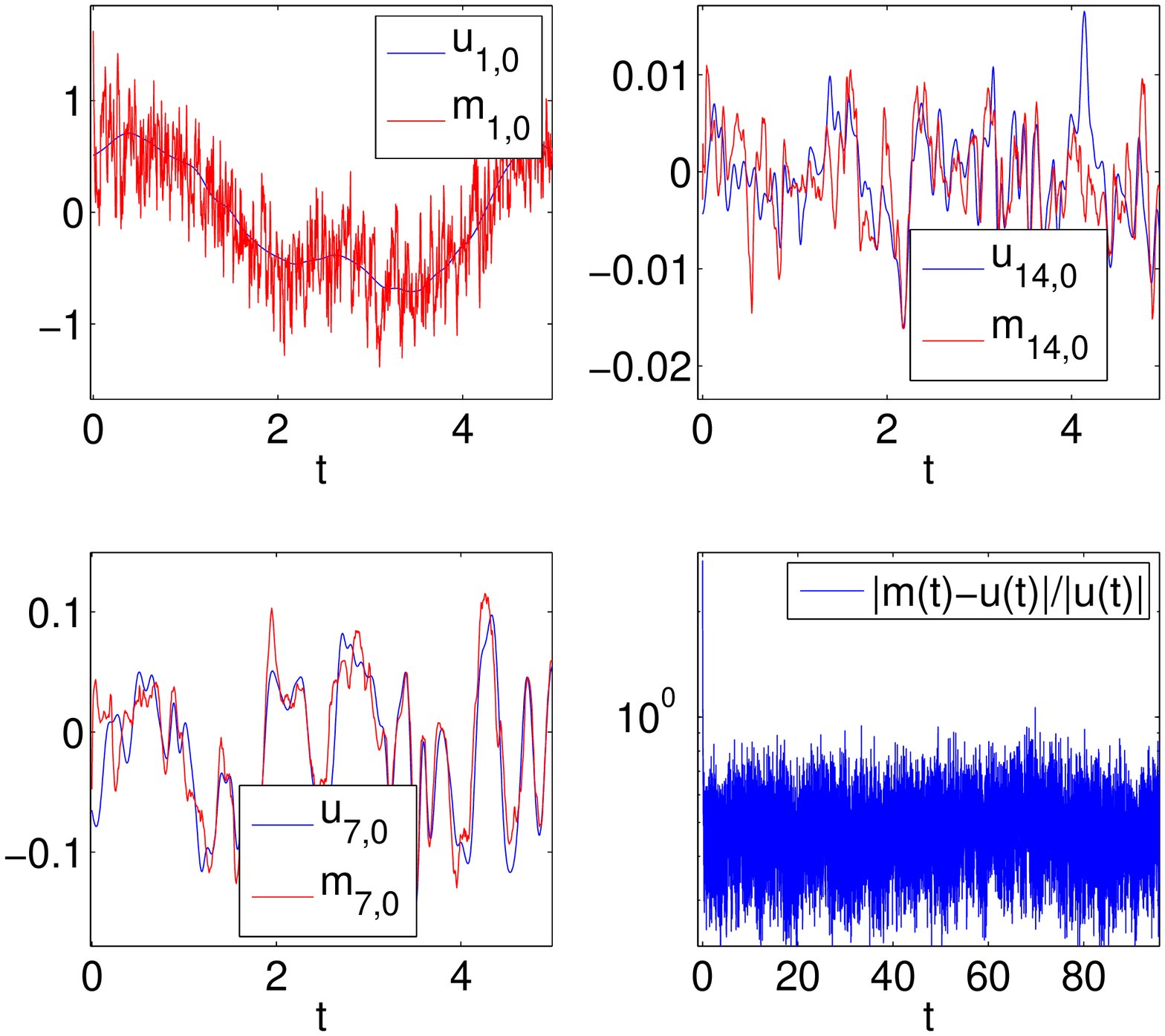}
\caption{Trajectories of various modes of the estimator $\hu$ 
and the signal $u$ are depicted above for 
$\beta=1$ and $\sigma_0=0.05$, along with the relative error.}
\label{a1dc5.t}
\end{figure*}


\begin{figure*}
\includegraphics[width=1\textwidth]{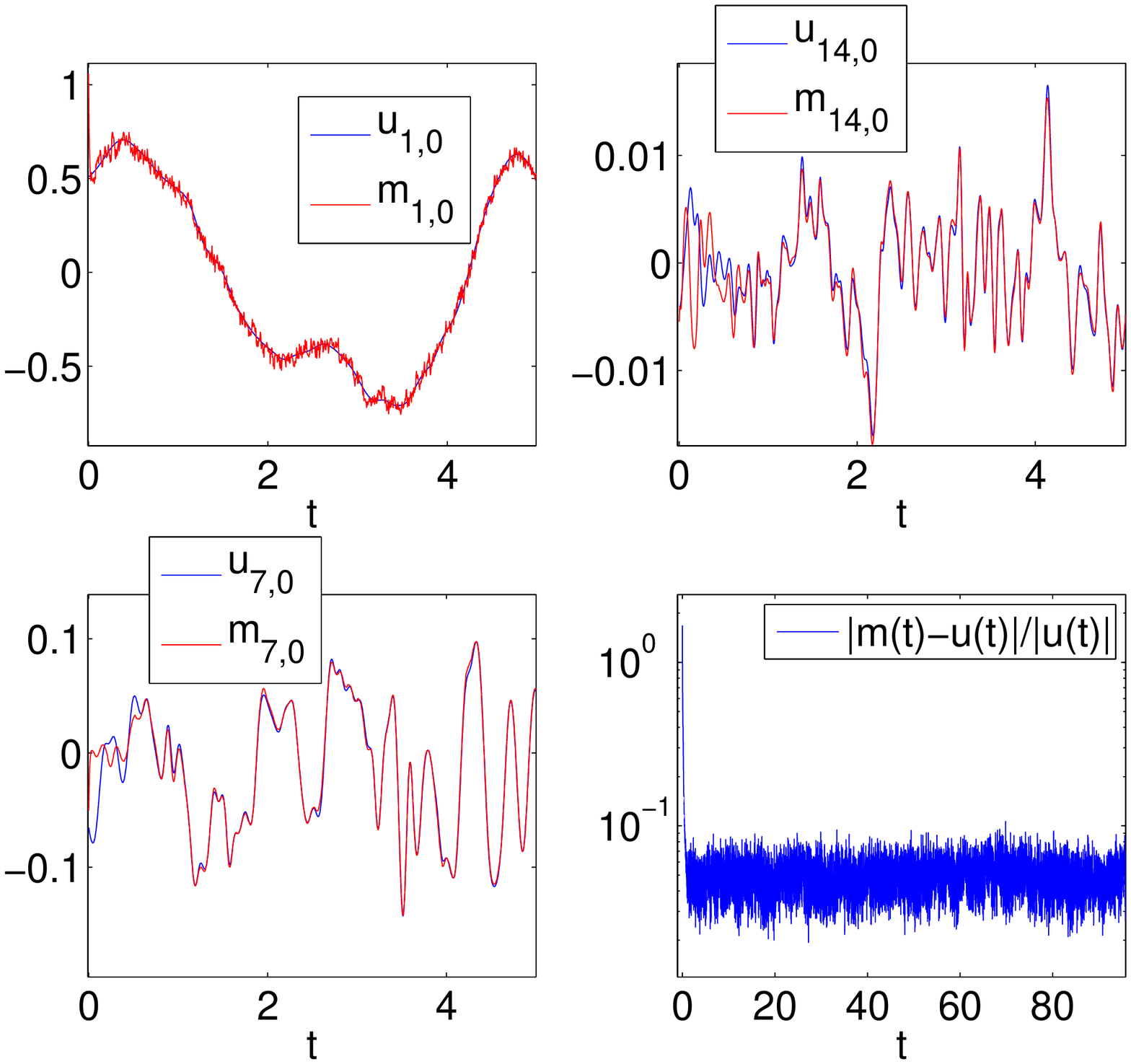}
\caption{Trajectories of various modes of the estimator $\hu$ 
and the signal $u$ are depicted above for $\beta=1$ and 
$\sigma_0=0.005$, along with the relative error.}
\label{a1dc05.t}
\end{figure*}


\begin{figure*}
\includegraphics[width=1\textwidth]{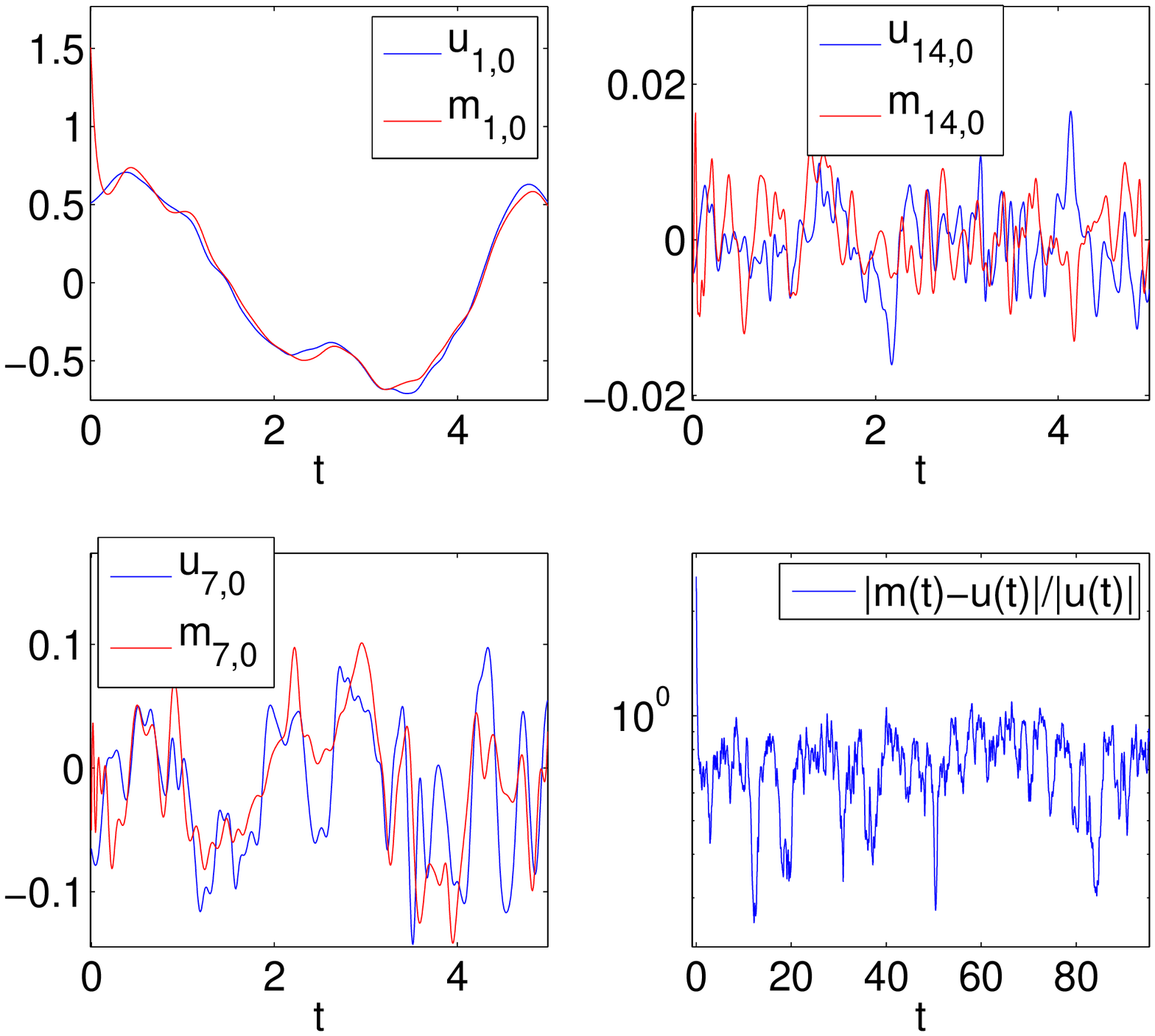}
\caption{Trajectories of various modes of the estimator $\hu$ 
and the signal $u$ are depicted above for 
$r<1$ and $\omega=10$, along with the relative error.}
\label{d10.t}
\end{figure*}


\begin{figure*}
\includegraphics[width=1\textwidth]{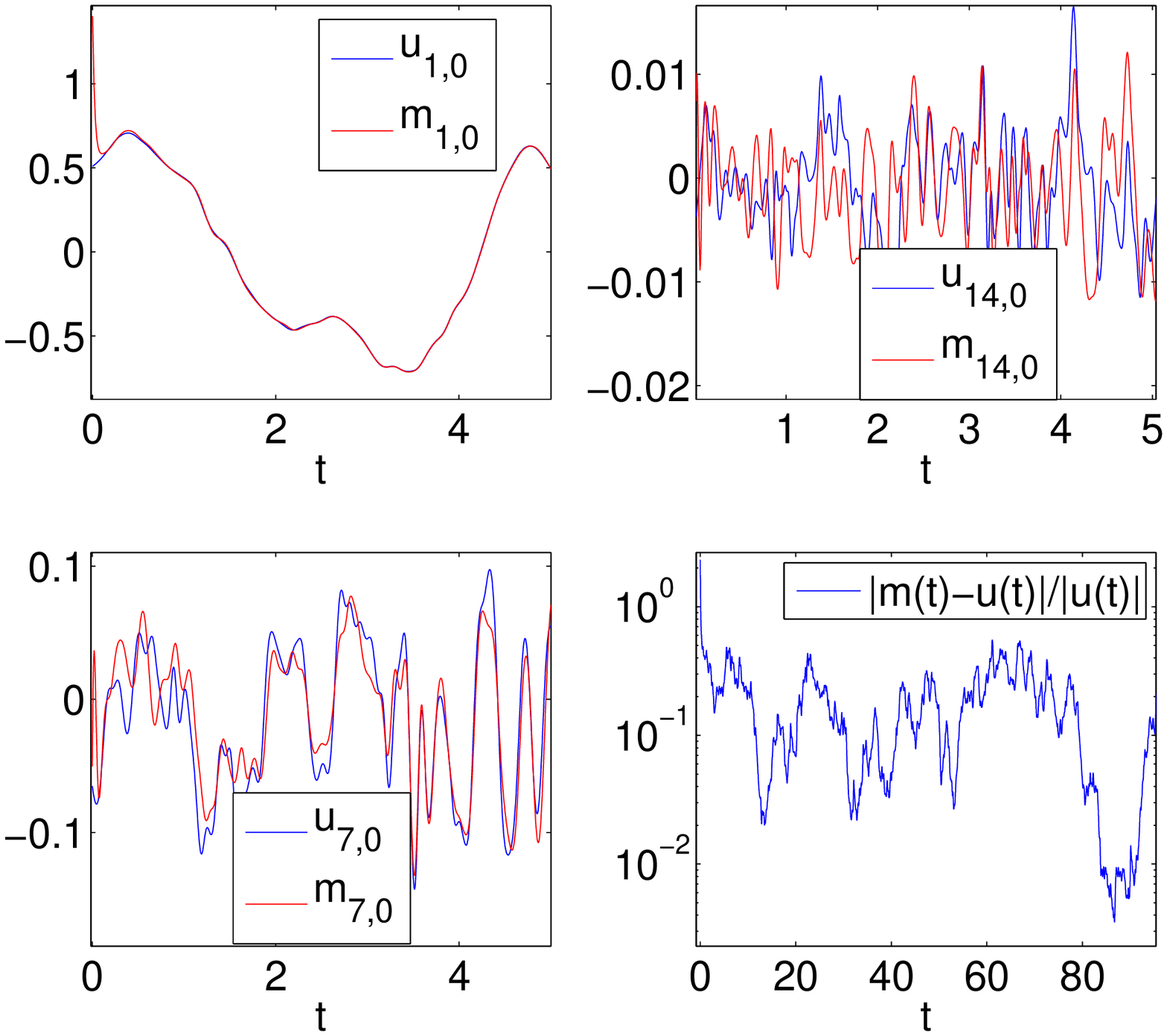}
\caption{Trajectories of various modes of the estimator $\hu$ 
and the signal $u$ are depicted above for 
$r<1$ and $\omega=30$, along with the relative error.}
\label{d30.t}
\end{figure*}


\section{Conclusion}
\label{sec:conclusions}

This paper contains three main components:

\begin{itemize}

\item we show that the filtering problem for the
Navier-Stokes equation may be formulated as a
well-posed inverse problem in function space,
and demonstrated how various approximate Gaussian
filters can be derived (Theorems \ref{thm:z} and
\ref{t:app},
and Corollary \ref{cor:z});

\item we prove Theorems \ref{t:m} and \ref{t:mz},
which establish filter stability 
for small enough observational noise;

\item we derive an SPDE (\ref{eq:nse2}) and a PDE
(\ref{eq:nse3}), which may be used as the basis to
study filter stability in the case of frequent observations
and large observational noise.

\end{itemize}
Numerical results are used to illustrate, and extend the
validity of, the theory.  

\noindent There are a number of natural future directions
which stem from this work:

\begin{itemize}

\item to develop analogous filter stability theorems
for more sophisticated filters, such as the extended
and ensemble-based methods, when applied to the
Navier-Stokes equation;

\item to rigorously derive, and study the properties of,
the SPDE (\ref{eq:nse2}) which characterizes
filter stability in the case of frequent observations
and large observational noise;

\item to study model-data mismatch by looking at filter
stability for data generated by forward models which
differ from those used to construct the filter; 

\item to study the effect of filtering in the presence of
model error, by similar methods, to understand how this
may be used to overcome problems arising from model-data mismatch.

\end{itemize}

\vspace{0.1in}

\noindent{\bf Acknowledgements}
{AMS would like to thank the following institutions for
financial support: EPSRC, ERC and ONR; KJHL was supported
by EPSRC and ONR; and 
CEAB, KFL, DSM and MRS were supported EPSRC, through the
MASDOC Graduate Training Centre at Warwick University. 
The authors also thank The Mathematics Institute and Centre for Scientific
Computing at Warwick University for supplying valuable computation
time.  Finally, the authors
thank Masoumeh Dashti for valuable input.}


\bibliographystyle{plain}

\bibliography{fsbib}

\end{document}